

\documentclass[a4paper,11pt]{amsart}
\usepackage[left=2.7cm,right=2.7cm,top=3.5cm,bottom=3cm]{geometry}

\usepackage{amssymb,latexsym,amsmath,amsthm,amscd}
\usepackage{stmaryrd}

\usepackage{graphicx}
\usepackage{dsfont}
\usepackage{longtable,tabu}
\usepackage[all]{xy}
\usepackage{mathrsfs}
\usepackage{color}
\definecolor{Blue}{rgb}{0.3,0.3,0.9}
\usepackage{array}
\usepackage{tabu}
\usepackage{hyperref}

\tabulinesep=1mm

\usepackage[T2A,OT1]{fontenc}
\DeclareSymbolFont{cyrillic}{T2A}{cmr}{m}{n}
\DeclareMathSymbol{\Sha}{\mathalpha}{cyrillic}{216}




\vfuzz2pt 
\hfuzz2pt 

\newtheorem{thm}{Theorem}[section]
\newtheorem{pro-thm}[thm]{Main Theorem}
\newtheorem{def-thm}[thm]{Definition-Theorem}
\newtheorem{cor}[thm]{Corollary}
\newtheorem{lem}[thm]{Lemma}

\newtheorem{prop}[thm]{Proposition}

\newtheorem{conj}[thm]{Conjecture}

\newtheorem*{conjA'}{Conjecture~A$'$}

\newtheorem*{assumption-spade}{Hypothesis~$\spadesuit$}
\theoremstyle{definition}
\newtheorem{defn}[thm]{Definition}
\newtheorem{hyp}[thm]{Hypotheses}
\theoremstyle{remark}
\newtheorem{rem}[thm]{Remark}
\newtheorem{intro-rem}{Remark}
\numberwithin{equation}{section}

\newtheorem{claim}[thm]{Claim}


\newcommand{\Sel}{{\mathfrak{Sel}}}

\newcommand{\pp}{\mathfrak{p}}
\newcommand{\ppbar}{\overline{\mathfrak{p}}}
\newcommand{\qq}{\mathfrak{q}}

\newcommand{\bQ}{\mathbf{Q}}
\newcommand{\bZ}{\mathbf{Z}}




\def\cO{\mathcal O}

\newcommand{\Z}{\mathbf{Z}}

\def\Sar{{S}}
\def\fSar{{\mathfrak{S}}}

\def\makeop#1{\expandafter\def\csname#1\endcsname
	{\mathop{\rm #1}\nolimits}\ignorespaces}
\makeop{Hom}   \makeop{End}   \makeop{Aut}   
\makeop{Pic} \makeop{Gal}       \makeop{Div} \makeop{Lie}
\makeop{PGL}   \makeop{Corr} \makeop{PSL} \makeop{sgn} \makeop{Spf}
\makeop{Tr} \makeop{Nr} \makeop{Fr} \makeop{disc}
\makeop{Proj} \makeop{supp} \makeop{ker}   \makeop{Im} \makeop{dom}
\makeop{coker} \makeop{Stab} \makeop{SO} \makeop{SL} \makeop{SL}
\makeop{Cl}    \makeop{cond} \makeop{Br} \makeop{inv} \makeop{rank}
\makeop{id}    \makeop{Fil} \makeop{Frac}  \makeop{GL} \makeop{SU}
\makeop{Trd}   \makeop{Sp} \makeop{Tr}    \makeop{Trd} \makeop{Res}
\makeop{ind} \makeop{depth} \makeop{Tr} \makeop{st} \makeop{Ad}
\makeop{Int} \makeop{tr}    \makeop{Sym} \makeop{can} \makeop{SO}
\makeop{torsion} \makeop{GSp} \makeop{Tor}\makeop{Ker} \makeop{rec}
\makeop{Ind} \makeop{Coker}
\makeop{vol} \makeop{Ext} \makeop{gr} \makeop{ad}
\makeop{Gr}\makeop{corank} \makeop{Ann}
\makeop{Hol} 
\makeop{Fitt} \makeop{Mp} \makeop{CAP}

\def\Sel{\mathfrak{Sel}}




\newcommand{\beqcd}[1]{\begin{equation*}\label{#1}\tag{#1}}
\newcommand{\eeqcd}{\end{equation*}}



\newcommand{\fS}{{\mathfrak{S}}}


\begin{document}
	
\title
{On anticyclotomic variants of the $p$-adic Birch and Swinnerton-Dyer conjecture}
\author[B.~Agboola]{Adebisi Agboola}
\email{agboola@ucsb.edu}
\address[]{Department of Mathematics, University of California Santa Barbara, CA 93106, USA}

\author[F.~Castella]{Francesc Castella}
\email{castella@ucsb.edu}
\address[]{Department of Mathematics, University of California Santa Barbara, CA 93106, USA}
	
\thanks{During the preparation of this paper, Castella was partially supported by NSF grant DMS-1801385, 1946136.}
\subjclass[2010]{Primary 11G05; Secondary 11R23, 11G16}
\keywords{Elliptic curves, Birch and Swinnerton-Dyer conjecture, Heegner points, $p$-adic $L$-functions}	
\date{\today}




\begin{abstract}
We formulate analogues of the Birch and Swinnerton-Dyer conjecture for the $p$-adic $L$-functions of Bertolini--Darmon--Prasanna 
attached to elliptic curves $E/\bQ$ at primes $p$ of good ordinary reduction. 
%
Using Iwasawa theory, we then prove under mild hypotheses 
one of the  inequalities predicted by the rank part of our conjectures, as well as the predicted leading coefficient formula up to a $p$-adic unit. 

Our conjectures are very closely related to conjectures of Birch and Swinnerton-Dyer type formulated by Bertolini--Darmon in 1996 for certain Heegner distributions, and as application of our results we  also obtain the proof of an inequality in the rank part of their conjectures.
\end{abstract}


\maketitle
\setcounter{tocdepth}{1}
\tableofcontents

\section{Introduction}



Let $E/\bQ$ be an elliptic curve of conductor $N$, let $p>2$ be a  prime of good ordinary reduction for $E$, and let $K$ be an imaginary quadratic field of discriminant prime to $Np$. Let $K_\infty/K$ be the anticyclotomic $\bZ_p$-extension of $K$, 
and set $\Gamma_\infty={\rm Gal}(K_\infty/K)$ and $\Lambda=\bZ_p[[\Gamma_\infty]]$.

Assume that $K$ satisfies the \emph{Heegner hypothesis} relative to $N$, i.e., that
\begin{equation}\label{eq:Heeg}
\textrm{every prime factor of $N$ splits in $K$.}\tag{Heeg}
\end{equation}
This condition implies that the root number of $E/K$ is $-1$. 
%
%
Assume in addition that 
\begin{equation}\label{eq:spl}
\textrm{$p=\mathfrak{p}\overline{\pp}$ splits in $K$,}\tag{spl}
\end{equation} 
and let $\hat{\mathcal{O}}$ be the completion of the ring of integers of the maximal unramified extension of $\bQ_p$. Let $f\in S_2(\Gamma_0(N))$ be the newform associated with $E$. In \cite{bdp1} (as later strengthened in \cite{braIMRN, cas-hsieh1}), 
Bertolini--Darmon--Prasanna introduced a $p$-adic $L$-function
\[
\mathscr{L}_\pp(f)\in
\Lambda_{\hat{\cO}}:=\Lambda\hat{\otimes}_{\bZ_p}\hat{\mathcal{O}}
\]
with $L_\pp(f):=\mathscr{L}_\pp(f)^2$ interpolating the central critical values for the twists of $f/K$ by certain infinite order characters of $\Gamma_\infty$. In this paper we formulate and study analogues of the Birch and Swinnerton-Dyer conjecture for these $p$-adic $L$-functions.

Any continuous character $\chi:\Gamma_\infty\rightarrow\hat{\cO}^\times$ extends to a map $\chi:\Lambda_{\hat{\cO}}\rightarrow\hat{\cO}$, and  we  write $L_\pp(f)(\chi)$ for $\chi(L_\pp(f))$. The trivial character $\mathds{1}$ of $\Gamma_\infty$ lies outside the range of $p$-adic interpolation for $L_\pp(f)$, and one of the main results of \cite{bdp1} is a formula of $p$-adic Gross--Zagier type for this value:
\begin{equation}\label{eq:bdp}
L_\pp(f)(\mathds{1})=u_K^{-2}c_E^{-2}\cdot\biggl(\frac{1-a_p(E)+p}{p}\biggr)^2\cdot\log_{\omega_E}(z_K)^2
\end{equation}
(see \cite[Thm.~5.13]{bdp1}, as specialized in \cite[Thm.~3.12]{bdp3} to the case where $f$ has weight $k=2$). Here $a_p(E):=p+1-\#E(\mathbf{F}_p)$ and $u_K:=\frac{1}{2}\#\cO_K^\times$ as usual,  $z_K\in E(K)$ is a Heegner point arising from a modular parametrization $\varphi:X_0(N)\rightarrow E$, 
\[
\log_{\omega_E}:E(K_\pp)\otimes\bZ_p\rightarrow\bZ_p
\] 
is the formal group logarithm associated with a N\'{e}ron differential $\omega_E\in\Omega^1(E/\bZ_{(p)})$, and $c_E\in\bZ$ is such that $\varphi^*(\omega_E)=c_E\cdot 2\pi if(\tau)d\tau$. 

Formula (\ref{eq:bdp}) has been a key ingredient in recent progress on the Birch and Swinnerton-Dyer conjecture. 
Most notably, 
for elliptic curves $E/\bQ$ with ${\rm rank}_\bZ E(\bQ)=1$ and $\#\Sha(E/\bQ)_{p^\infty}<\infty$, it was used by Skinner \cite{skinner} (for a suitable choice of $K$) to prove a converse to the celebrated theorem of Gross--Zagier and Kolyvagin, and for elliptic curves $E/\bQ$ with ${\rm ord}_{s=1}L(E,s)=1$, it was used by Jetchev--Skinner--Wan \cite{JSW} (again, for suitably chosen $K$) to prove under mild hypotheses the $p$-part of the Birch and Swinnerton-Dyer formula. 

For elliptic curves $E/\bQ$ 
satisfying ${\rm rank}_\bZ E(K)\geqslant 2$ and $\#\Sha(E/K)_{p^\infty}<\infty$, 
the Heegner point $z_K$ is torsion and formula (\ref{eq:bdp}) shows that $L_\pp(f)(\mathds{1})=0$, or equivalently, that $L_\pp(f)\in J$, where 
\[
J:={\rm ker}(\epsilon:\Lambda_{\hat{\cO}}\rightarrow\hat{\cO}) 
\]
is the augmentation ideal of $\Lambda_{\hat{\cO}}$. The 
conjectures formulated in this paper predict 
the largest power $J^\nu$ of $J$ in which $L_\pp(f)$ lives (i.e., the ``order of vanishing'' of $L_\pp(f)$ at $\mathds{1}$) in terms of the ranks of $E(\bQ)$ and $E(K)$, and a formula for the image of $L_\pp(f)$ in $J^\nu/J^{\nu+1}$ (i.e., the ``leading coefficient'' of $L_\pp(f)$ at $\mathds{1}$) in terms of arithmetic invariants of $E$. For the sake of illustration, in the remainder of this Introduction we concentrate on a weaker form of these conjectures, 
referring the reader to Section~3 
for their actual form.

Let $T=T_pE$ be the $p$-adic Tate module of $E$, and let $S_p(E/K)\subset{\rm H}^1(K,T)$ be the classical pro-$p$ Selmer group of $E$ fitting in the exact sequence
\[
0\rightarrow E(K)\otimes\bZ_p\rightarrow S_p(E/K)\rightarrow T_p\Sha(E/K)\rightarrow 0.
\] 
We shall assume that $\#\Sha(E/K)_{}<\infty$, so in particular $S_p(E/K)\simeq E(K)\otimes\bZ_p$. The work of Mazur--Tate \cite{mt-biext} (see also \cite{schneiderI}) produces a canonical symmetric $p$-adic height pairing
\[
h_p^{\tt MT}:S_p(E/K)\times S_p(E/K)\rightarrow (J/J^2)\otimes\bQ.
\]
By the $p$-parity conjecture \cite{nekovarII}, our assumptions also imply that the strict Selmer group 
\[
{\rm Sel}_{\rm str}(K,T):={\rm ker}\bigl\{S_p(E/K)\rightarrow E(K_\pp)\otimes\bZ_p\bigr\}
\]
has $\bZ_p$-rank $r-1$, where $r={\rm rank}_{\bZ}E(K)$. Assume that 
$S_p(E/K)$ is torsion-free (this holds if e.g. $E_p$ is irreducible as $G_K$-module), let $P_1,\dots,P_r$ be an integral basis for $E(K)\otimes\bQ$ and let $A$ be an endomorphism of $E(K)\otimes\bZ_p$ sending $P_1,\dots,P_r$ basis to a $\bZ_p$-basis $x_1,\dots,x_{r-1},y_\pp$ for $S_p(E/K)$ with $x_1,\dots,x_{r-1}$ generating ${\rm Sel}_{\rm str}(K,T)$. Set $t=\det(A)\cdot[E(K)\colon\bZ P_1+\cdots+\bZ P_r]$.
%
The following is a special case of our Conjecture~\ref{conj:BSD}. 

\begin{conj}\label{intro-conj}
Assume that $\Sha(E/K)$ is finite and that $E_p$ is irreducible as $G_K$-module, and let $r={\rm rank}_{\bZ}E(K)$. Then:
\begin{itemize}
\item[(i)] $L_{\pp}(f)\in J^{r-1}$.
\item[(ii)] Letting $\bar{L}_{\pp}(f)$ be the natural image of $L_\pp(f)$ in $J^{r-1}/J^r$, we have
\[
\bar{L}_\pp(f)=\biggl(\frac{1-a_p(E)+p}{p}\biggr)^2\cdot{\rm log}_{\omega_E}(y_\pp)^2\cdot{\rm Reg}_{\pp}\cdot t^{-2}\cdot\#\Sha(E/K)_{}\cdot\prod_{\ell\vert N}c_\ell^2,
\]
where ${\rm Reg}_\pp=\det(h_p^{\tt MT}(x_i,x_j))_{1\leqslant i,j\leqslant r-1}$ is the regulator of $h_p^{\tt MT}$ restricted to ${\rm Sel}_{\rm str}(K,T)$, and $c_\ell$ is the Tamagawa number of $E/\bQ_\ell$.
\end{itemize}
\end{conj}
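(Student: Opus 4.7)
The plan is to deduce Conjecture~\ref{intro-conj} from the anticyclotomic Iwasawa Main Conjecture for $E/K$ in its Heegner-point formulation (Perrin-Riou, Howard), combined with a Perrin-Riou-style derivative computation for characteristic power series in terms of $p$-adic heights. The central input will be the assertion that the Pontryagin dual $\mathfrak{X}$ of the discrete Selmer group $\mathrm{Sel}(K_\infty,E[p^\infty])$ has $\Lambda$-rank one --- generated by the universal Heegner class --- and that the characteristic ideal of its torsion submodule $\mathfrak{X}_{\rm tors}$ satisfies
\[
{\rm char}_{\Lambda_{\hat\cO}}(\mathfrak{X}_{\rm tors}) \supseteq (L_\pp(f))
\]
in $\Lambda_{\hat\cO}$. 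The squaring $L_\pp(f) = \mathscr{L}_\pp(f)^2$ is built in to this form of the main conjecture, and the displayed divisibility is expected to follow from existing anticyclotomic Euler-system work (Castella--Howard, Wan, Burungale--Castella--Skinner) combined with an explicit reciprocity law linking $\mathscr{L}_\pp(f)$ to the image of the universal Heegner class under a Coleman map.

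For part (i), the next step is a Mazur-style control theorem at the trivial character: the specialization $\mathfrak{X}/J\mathfrak{X}$ should compute, up to finite kernel and cokernel, the Pontryagin dual of $\mathrm{Sel}(K,E[p^\infty])$, whose $\bZ_p$-corank equals $r$ under the assumption $\#\Sha(E/K)<\infty$. Subtracting the contribution of the $\Lambda$-free quotient of $\mathfrak{X}$ should leave $\hat\cO$-corank $r-1$ for $\mathfrak{X}_{\rm tors}/J\mathfrak{X}_{\rm tors}$, and the structure theorem for $\Lambda_{\hat\cO}$-modules will then force ${\rm char}_{\Lambda_{\hat\cO}}(\mathfrak{X}_{\rm tors}) \subseteq J^{r-1}$, yielding the desired containment $L_\pp(f)\in J^{r-1}$.

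For part (ii), the image of $L_\pp(f)$ in $J^{r-1}/J^r$ will be computed via Perrin-Riou's derivative formalism: the first non-vanishing derivative of the characteristic power series of $\mathfrak{X}_{\rm tors}$ at $\mathds{1}$ should factor as the product of (a) the determinant of the Mazur--Tate $p$-adic height pairing restricted to $\mathrm{Sel}_{\rm str}(K,T)$, producing $\mathrm{Reg}_\pp\cdot t^{-2}$ after the change of basis encoded by $A$, (b) a factor $\log_{\omega_E}(y_\pp)^2$ coming from the ``non-strict'' direction $y_\pp$ and the BDP reciprocity law (\ref{eq:bdp}), (c) the Euler factor $((1-a_p(E)+p)/p)^2$, (d) the contribution $\#\Sha(E/K)$ from the Cassels--Poitou--Tate sequence, and (e) the Tamagawa product $\prod_{\ell\vert N}c_\ell^2$ from local control terms at primes of bad reduction. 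The hardest step will be establishing (ii) \emph{on the nose} rather than only up to a $p$-adic unit: this requires both divisibilities in the IMC together with sharp \emph{integral} control theorems, which is why in a first pass one should expect to prove the leading coefficient formula only up to a $p$-adic unit, as indeed promised by the abstract.
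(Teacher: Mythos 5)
The statement you were asked to address is Conjecture~\ref{intro-conj}: it is a \emph{conjecture}, not a theorem, and the paper does not prove it. What the paper actually establishes (Theorem~\ref{thm:A} and Corollary~\ref{cor:B}) is strictly weaker: one inequality ${\rm ord}_J F_\pp(f)\geqslant 2(\max\{r^+,r^-\}-1)$ for the characteristic power series, the corresponding statement for $L_\pp(f)$ only after importing the known cases of the Iwasawa--Greenberg main conjecture under the extra hypothesis (\ref{eq:star}), and the leading coefficient formula only up to a $p$-adic unit and under considerably stronger hypotheses (surjectivity of $\rho_{E,p}$, ramification of $E_p$ at all $\ell\mid N$, $p\nmid\#E(\mathbf{F}_p)$). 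Your proposal is a strategy outline resting on inputs that are themselves open --- both divisibilities of the Heegner-point main conjecture with exact constants, and ``sharp integral control theorems'' --- and you concede as much at the end. So it cannot be accepted as a proof of the conjecture; at best it sketches a route to the partial results the paper actually obtains.

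Beyond that, there is a concrete mathematical flaw in your step for part (ii). You propose to compute the image of $L_\pp(f)$ in $J^{r-1}/J^r$ as ``the first non-vanishing derivative of the characteristic power series,'' factoring through the determinant of $h_p^{\tt MT}$ restricted to ${\rm Sel}_{\rm str}(K,T)$. But as the paper emphasizes (Remark~\ref{rem:isotropic} and the discussion following Conjecture~\ref{conj:BDconj-weakLC}), the eigenspaces $E(K)^\pm$ are isotropic for the anticyclotomic height, so the null-space of $h_p^{\tt MT}$ has rank at least $\vert r^+-r^-\vert$; whenever $\vert r^+-r^-\vert>1$ the regulator ${\rm Reg}_\pp$ vanishes identically and the true order of vanishing $2(\max\{r^+,r^-\}-1)$ exceeds $r-1$. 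In that case the ``first non-vanishing derivative'' lives in a higher graded piece and is governed by the \emph{derived} height pairings of Bertolini--Darmon, not by $h_p^{\tt MT}$; your formalism conflates the two and would not produce the image in $J^{r-1}/J^r$ (which is then $0=0$) by the route you describe. This systematic degeneracy is precisely why the authors refine Conjecture~\ref{intro-conj} into Conjecture~\ref{conj:BSD} using ${\rm Reg}_{\pp,{\rm der}}$. Note also that the paper's actual argument for its partial result is not a Perrin-Riou derivative computation on a rank-one Heegner module: it works with the $\pp$-relaxed/strict Selmer groups $\Sel_\pp$, proves a control theorem via $n$-admissible primes, freeness over $(\bZ/p^n\bZ)[\Gamma_n]$ and Fitting ideals following \cite{BD-derived-DMJ, BD-derived-AJM}, and extracts the factors $p^{-2}\log_{\omega_E}(y_\pp)^2\cdot\#\Sha(E/K)_{p^\infty}$ from a direct computation of $\#(\Sel_\pp(K,E_{p^\infty})_{/{\rm div}})$ via relaxed Tate--Shafarevich groups and the formal group logarithm, without using the BDP formula (\ref{eq:bdp}) in the proof.
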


\begin{rem}\label{rem:r=1}
Suppose ${\rm rank}_\bZ E(K)=1$. Under the assumptions of Conjecture~\ref{intro-conj} we then have  ${\rm Sel}_{\rm str}(K,T)=\{0\}$, ${\rm Reg}_\pp=1$ and $t=[E(K):\bZ.y_\pp]$, where $y_\pp$ is a  generator of $E(K)\otimes\bQ$ with $\log_{\omega_E}(y_\pp)\neq 0$. Thus in this case Conjecture~\ref{intro-conj} predicts that $L_\pp(f)(\mathds{1})\neq 0$, 
and by formula (\ref{eq:bdp}) 
the predicted expression for $L_\pp(f)(\mathds{1})$ is equivalent to the equality
\[
[E(K):\bZ.y_K]^2=u_K^2c_E^2\cdot\#\Sha(E/K)_{}\cdot\prod_{\ell\vert N}c_\ell^2,
\]
which is also predicted by the classical Birch and Swinnerton-Dyer conjecture for $E/K$ when combined with the Gross--Zagier formula (see \cite[Conj.~1.2]{gross-kolyvagin}). 
\end{rem}


The Iwasawa--Greenberg main conjecture \cite{Greenberg55} predicts that  $L_\pp(f)$ is a generator of the characteristic ideal of a certain $\Lambda$-torsion Selmer group $X_\pp$. Letting $F_\pp(f)\in\Lambda$ be a generator of this characteristic ideal, in this paper we prove the following result towards Conjecture~\ref{intro-conj}, which can be viewed simultaneously as a non-CM analogue of (resuls towards) Rubin's variant of the $p$-adic Birch and Swinnerton-Dyer conjecture (see \cite[\S{11}]{rubin-points}, \cite[Thm.~5]{rubin-BSD} and  \cite[Thm.~B]{agboola-I}) in the anticyclotomic setting, and an extension of the anticyclotomic control theorem of \cite[\S{3.3}]{JSW} to arbitrary ranks.


\begin{thm}[\emph{cf.} Theorem~\ref{thm:A}]\label{intro-thm}
Assume that $\Sha(E/K)_{p^\infty}$ is finite and that:
\begin{itemize} 
\item[{\rm (i)}] $\rho_{E,p}:G_\bQ\rightarrow{\rm Aut}_{\mathbf{F}_p}(E_p)$ is surjective. 
\item[{\rm (ii)}] $E_p$ is ramified at every prime $\ell\vert N$.
\item[{\rm (iii)}] $p\nmid\#E(\mathbf{F}_p)$.
\end{itemize}
Then $F_\pp(f)\in J^{r-1}$, where $r={\rm rank}_{\bZ}E(K)$, and letting $\bar{F}_{\pp}(f)$ be the natural image of $F_\pp(f)$ in $J^{r-1}/J^r$, we have 
\[
\bar{F}_\pp(f)=p^{-2}\cdot{\rm log}_{\omega_E}(y_\pp)^2\cdot{\rm Reg}_{\pp}\cdot t^{-2}\cdot\#\Sha(E/K)_{p^\infty}
\]
up to a $p$-adic unit. 
\end{thm}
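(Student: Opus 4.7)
The plan is to reduce the theorem to a purely algebraic analysis of the Iwasawa module $X_\pp$, combining an anticyclotomic control theorem with a Bockstein-type computation of the leading coefficient. The strategy is in the spirit of Rubin's approach to the CM variant of $p$-adic BSD (see \cite{rubin-points,rubin-BSD,agboola-I}) and extends the rank $\leqslant 1$ control theorem of \cite[\S3.3]{JSW} to arbitrary rank $r$.

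Under hypotheses (i)--(iii), the module $X_\pp$ is $\Lambda$-torsion (as already exploited in the anticyclotomic main conjecture literature), so that $F_\pp(f)$ is a well-defined generator of ${\rm Char}_\Lambda(X_\pp)$. I would then prove a control theorem identifying $(X_\pp)_\Gamma := X_\pp/JX_\pp$, up to a finite error of order $\#\Sha(E/K)_{p^\infty}$ times a $p$-adic unit, with the Pontryagin dual of a Selmer group fitting into the exact sequence
\begin{equation*}
0\to {\rm Sel}_{\rm str}(K,T)\to S_p(E/K)\to \bZ_p\cdot y_\pp\to 0;
\end{equation*}
hypothesis (ii) ensures that the Tamagawa numbers $c_\ell$ for $\ell\vert N$ are $p$-adic units. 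Since $\ord_J F_\pp(f)$ equals $\mathrm{rank}_{\bZ_p}(X_\pp)_\Gamma$, this immediately yields the first assertion $F_\pp(f)\in J^{r-1}$.

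For the leading coefficient, one appeals to the classical algebraic description (going back to Perrin-Riou, Schneider, and Rubin) of the image $\bar F_\pp(f)\in J^{r-1}/J^r$ as a determinant of an algebraic Bockstein pairing into $J/J^2$, multiplied by the cardinality of the torsion part of $(X_\pp)_\Gamma$. The heart of the proof is the arithmetic identification of this Bockstein: along the $(r-1)$-dimensional strict Selmer direction it should agree up to units with the Mazur--Tate height pairing $h_p^{\tt MT}$, contributing ${\rm Reg}_\pp\cdot t^{-2}$; along the complementary rank-one direction spanned by $y_\pp$, it is computed by differentiating the BDP formula \eqref{eq:bdp} along the anticyclotomic tower, contributing $p^{-2}\log_{\omega_E}(y_\pp)^2$ up to a $p$-adic unit. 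The factor $p^{-2}$ is what remains of $\bigl((1-a_p(E)+p)/p\bigr)^2 = (\#E(\mathbf{F}_p)/p)^2$ once hypothesis (iii) renders $\#E(\mathbf{F}_p)$ a $p$-adic unit, while the torsion part of $(X_\pp)_\Gamma$ supplies the factor $\#\Sha(E/K)_{p^\infty}$. The main technical obstacle is this identification of the algebraic Bockstein with the Mazur--Tate height: it rests on an interpretation of $p$-adic heights as infinitesimal deformations along the anticyclotomic tower (in the style of Nekov\'a\v{r} or Schneider), and demands careful bookkeeping of auxiliary $p$-adic unit factors, including the squaring $L_\pp(f)=\mathscr{L}_\pp(f)^2$ that is responsible for the squared logarithm and regulator in the final formula.
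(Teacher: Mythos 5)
Your overall strategy (anticyclotomic control theorem plus an algebraic Bockstein/leading-coefficient lemma) is a legitimate alternative in principle, and it is genuinely different from what the paper does, but as set up it contains two real gaps. First, the asserted equality ${\rm ord}_JF_\pp(f)={\rm rank}_{\bZ_p}(X_\pp/JX_\pp)$ is false: for a torsion $\Lambda$-module only the inequality $\geqslant$ holds, and the discrepancy is not a technicality here. Theorem~\ref{thm:A} shows ${\rm ord}_JF_\pp(f)\geqslant 2(\max\{r^+,r^-\}-1)$, which is strictly larger than $r-1={\rm rank}_{\bZ_p}(X_\pp/JX_\pp)$ whenever $\vert r^+-r^-\vert>1$; in that regime the anticyclotomic height is systematically degenerate (Remark~\ref{rem:isotropic}), ${\rm Reg}_\pp=0$, and the displayed formula is the statement $0=0$, which your claimed equality of the order of vanishing with the coinvariant rank would contradict. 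A correctly stated Bockstein lemma (image of $F_\pp(f)$ in $J^{r-1}/J^{r}$ equals a torsion contribution times $\det\beta$, with both sides allowed to vanish) survives the degeneracy, but your version does not; this degeneracy is exactly why the paper proves the refined statement with derived heights in $J^{2\rho}/J^{2\rho+1}$ first and deduces the $J^{r-1}/J^{r}$ statement from it.

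Second, your accounting of the factor $p^{-2}\log_{\omega_E}(y_\pp)^2$ is wrong. The quantity $F_\pp(f)$ is the characteristic power series of a Selmer group, so no analytic input such as ``differentiating the BDP formula'' can enter its computation: formula (\ref{eq:bdp}) concerns $L_\pp(f)$ and is connected to $F_\pp(f)$ only through the main conjecture, which is invoked only in Corollary~\ref{cor:B}. Moreover $X_\pp/JX_\pp$ has rank $r-1$, not $r$ (Lemma~\ref{lem:relation}), so there is no ``complementary rank-one direction spanned by $y_\pp$'' in the Bockstein pairing at all. In the paper the factor $p^{-2}\log_{\omega_E}(y_\pp)^2$ arises purely algebraically as $(\#{\rm coker}({\rm loc}_\pp))^2$ inside the non-divisible part $\#(\Sel_\pp(K,E_{p^\infty})_{/{\rm div}})$ (Propositions~\ref{prop:6.11} and~\ref{prop:coker}); the square comes from the duality between the $\pp$- and $\ppbar$-local conditions (Lemma~\ref{lem:6.10}), not from $L_\pp(f)=\mathscr{L}_\pp(f)^2$. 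Hence your ``finite error of order $\#\Sha(E/K)_{p^\infty}$'' in the control theorem is off by exactly $(\#{\rm coker}({\rm loc}_\pp))^2$. Finally, note that the paper avoids the comparison of an abstract Bockstein with $h_p^{\tt MT}$ altogether: it computes $\mu_n(F_\pp)$ as a Fitting-ideal determinant over $(\bZ/p^n\bZ)[\Gamma_n]$ using $n$-admissible primes and the Bertolini--Darmon derivative operators $D_n^{(k)}$, so that the (derived) height pairings enter by construction rather than via the identification you flag as the main obstacle.
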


\begin{rem}\label{rem:p-adic-unit}
Note that conditions (ii) and (iii) in Theorem~\ref{intro-thm} imply that the terms $1-a_p(E)+p$  and $c_\ell$ for $\ell\vert N$ are all $p$-adic units.
\end{rem}

Combined with the Iwasawa--Greenberg main conjecture for $L_\pp(f)$  (which is known under relatively mild hypotheses \cite{BCK-PRconj}), Theorem~\ref{intro-thm} can be parlayed in terms of this $p$-adic $L$-function, yielding our main result towards Conjecture~\ref{intro-conj} (or rather its refinement in Conjecture~\ref{conj:BSD}); see Corollary~\ref{cor:B}.


We end this Introduction with some comments about the need for the refinement of Conjecture~\ref{intro-conj} 
given by Conjecture~\ref{conj:BSD}.   
We continue to assume that $\#\Sha(E/K)_{p^\infty}<\infty$, and let 
\[
r^\pm:={\rm rank}_{\bZ}E(K)^\pm
\] 
be the rank of the $\pm$-eigenspaces $E(K)^\pm\subset E(K)$ under the action of complex conjugation, so 
\[
r={\rm rank}_\bZ E(K)=r^++r^-.
\] 
From the Galois-equivariance properties of $h_p^{\tt MT}$, one easily sees that ${\rm Reg}_\pp=0$ when $\vert r^+-r^-\vert >1$. These systematic degeneracies of the $p$-adic height pairing in the anticyclotomic setting (which are in sharp contrast with the expected non-degeneracy of the $p$-adic height pairing in the cyclotomic setting) 
were understood by Bertolini--Darmon \cite{BD-derived-DMJ, BD-derived-AJM} as giving rise to canonical \emph{derived} $p$-adic height pairings, in terms of which we will define a generalized $p$-adic regulator ${\rm Reg}_{{\rm der},\pp}$. This generalized regulator recovers 
${\rm Reg}_\pp$ when $\vert r^+-r^-\vert=1$, but provides extra information when $\vert r^+-r^-\vert>1$. More precisely, the expected ``maximal non-degeneracy'' of the anticyclotomic $p$-adic height pairing (as conjectured by Mazur and Bertolini--Darmon) leads to the prediction that ${\rm Reg}_{{\rm der},\pp}$ is a nonzero element in $J^{2\rho}/J^{2\rho+1}$, where 
\[
\rho={\rm max}\{r^+,r^-\}-1.
\]

Conjecture~\ref{conj:BSD} then predicts that $L_\pp(f)$ lands in $J^{2\rho}$ 
(note that $2\rho>r-1$ when $\vert r^+-r^-\vert>1$), and posits a formula for its natural image in $J^{2\rho}/J^{2\rho+1}$ in terms of ${\rm Reg}_{{\rm der},\pp}$. 
Our main result 
is the analogue of Theorem~\ref{intro-thm} for this refined conjecture. 

\begin{rem} 
As it will be clear to the reader, our conjectures are very closely related to the conjectures of Birch and Swinnerton-Dyer type formulated by Bertolini--Darmon \cite{BDmumford-tate} for certain ``Heegner distributions''. In fact, as application of our results on Conjecture~\ref{conj:BSD}, we will deduce under mild hypotheses the proof of an inequality in the ``rank part'' of their conjectures (see Corollary~\ref{cor:C}).
\end{rem}

The remainder of this paper is organized as follows. In Section~2, after defining the relevant Selmer groups and recalling the conjectures of Bertolini--Darmon,  
we formulate our conjectures of Birch and Swinnerton-Dyer type for the $p$-adic $L$-functions $L_\pp(f)$ and $\mathscr{L}_\pp(f)$. In Section~3, we state and prove our main results in the direction of these conjectures.  


\section{The conjectures}

\subsection{Selmer groups}\label{subsec:Sel}

We keep the notation from the Introduction. In particular, $K_\infty$ denotes the anticyclotomic $\bZ_p$-extension of $K$. For every $n$ we  write $K_n$ for the subextension of $K_\infty$ with 
\[
\Gamma_n:={\rm Gal}(K_n/K)\simeq\bZ/p^n\bZ.
\] 

Let $S$ be a finite set of places of $\bQ$ containing $\infty$ and the primes dividing $Np$, and for every finite extension $F/\bQ$ let $\mathfrak{G}_{F,S}$ be the Galois group over $F$ of the maximal extension of $F$ unramified outside the places above $S$. 
For each prime $\mathfrak{q}\in\{\pp,\overline{\pp}\}$ 
set
\[
\Sel_\qq(K_n,T):={\rm ker}\biggl\{{\rm H}^1(\mathfrak{G}_{K_n,S},T)\rightarrow\prod_{w\vert p, w\nmid\overline{\qq}}{\rm H}^1(K_{n,w},T)\biggr\}.
\]

Let $\Sel_\mathfrak{q}(K_n,E_{p^\infty})\subset{\rm H}^1(\mathfrak{G}_{K_n,S},E_{p^\infty})$ be the Selmer group cut out by the local conditions given by the orthogonal complement under local Tate duality of the subspaces cutting out $\Sel_\qq(K_n,T)$, and set
\[
\Sel_\qq(K_\infty,E_{p^\infty}):=\varinjlim_n\Sel_\qq(K_n,E_{p^\infty}).
\]
As is well-known (see e.g. \cite[Prop.~3.2]{greenberg-Coates}), $\Sel_{\mathfrak{q}}(K_\infty,E_{p^\infty})$ is a cofinitely generated $\Lambda$-module, i.e., its Pontryagin dual $\Sel_{\mathfrak{q}}(K_\infty,E_{p^\infty})^\vee$ is finitely generated over $\Lambda$.

\begin{conj}[Iwasawa--Greenberg main conjecture]\label{conj:IMC}
The module $\Sel_\pp(K_\infty,E_{p^\infty})$ is $\Lambda$-cotorsion and 
\[
{\rm char}_\Lambda(\Sel_\pp(K_\infty,E_{p^\infty})^\vee)\Lambda_{\hat{\cO}}=(L_\pp(f))
\]
as ideals in $\Lambda_{\hat{\cO}}$.
\end{conj}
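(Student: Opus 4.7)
The plan is to establish the conjecture by proving two divisibilities between the characteristic ideal $\text{char}_\Lambda(X_\pp)$ of $X_\pp := \Sel_\pp(K_\infty,E_{p^\infty})^\vee$ and the ideal $(L_\pp(f))$ inside $\Lambda_{\hat{\cO}}$. The starting point is the anticyclotomic Heegner class $\kappa_\infty^{\rm BDP}\in{\rm H}^1_{\rm Iw}(K_\infty,T)$ obtained from the Kummer images of generalized Heegner cycles in the Bertolini--Darmon--Prasanna tower. By the Heegner hypothesis and \eqref{eq:spl}, this class lands in the Iwasawa cohomology with strict local condition at $\bar{\pp}$; its image under the Perrin-Riou--style Coleman map at $\pp$ is, by the explicit reciprocity law of Bertolini--Darmon--Prasanna (as made precise in \cite{cas-hsieh1}), exactly $\mathscr{L}_\pp(f)$ up to an explicit unit, whence the square is $L_\pp(f)$.

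For the divisibility $\text{char}_\Lambda(X_\pp)\cdot\Lambda_{\hat\cO}\supseteq(L_\pp(f))$, I would import Howard's anticyclotomic Kolyvagin--Rubin argument applied to $\kappa_\infty^{\rm BDP}$. Concretely, one bounds the $\bar{\pp}$-strict Selmer group $X_{\bar{\pp}}^{\rm str}$ by the ideal generated by ${\rm loc}_\pp(\kappa_\infty^{\rm BDP})$ using derivative classes at Kolyvagin primes, and then applies the Poitou--Tate exact sequence (together with the computation of local Euler characteristics for the self-dual representation $T$ along $K_\infty$) to pass from a bound on $X_{\bar\pp}^{\rm str}$ to the desired bound on $X_\pp$. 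The hypotheses of $\rho_{E,p}$ being residually irreducible with large image, together with ramification at bad primes, ensure that the Chebotarev/Kolyvagin machinery runs cleanly and that no spurious $\mu$-invariants are introduced.

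For the reverse divisibility $\text{char}_\Lambda(X_\pp)\cdot\Lambda_{\hat\cO}\subseteq(L_\pp(f))$, the natural route is to combine the cyclotomic Iwasawa main conjecture for $f/K$ (in the formulation of Skinner--Urban, using Eisenstein congruences on $\mathrm{GU}(2,2)$ or $\mathrm{U}(3,1)$) with a factorization-type argument expressing $L_\pp(f)$ in terms of the two-variable Rankin--Selberg or Hida-family $p$-adic $L$-function restricted to the anticyclotomic line. Alternatively, one deduces it from X.~Wan's main conjecture for Rankin--Selberg $\mathrm{GL}_2\times\mathrm{GL}_2$ $p$-adic $L$-functions together with Hsieh's construction of the relevant auxiliary CM forms; in either case, one uses that both sides have the same (trivial) $\mu$-invariant under the hypothesis that $E_p$ is ramified at every bad prime. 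The two divisibilities, combined with the fact that $(L_\pp(f))$ is a nonzero ideal (since $L_\pp(f)(\mathds{1})$ is given by \eqref{eq:bdp} and Heegner points are non-torsion for generic $K$ so at least some value of $L_\pp(f)$ is nonzero), force both sides to coincide and, in particular, show that $X_\pp$ is $\Lambda$-cotorsion.

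The main obstacle is the second divisibility. The Euler system direction is by now a well-understood application of Howard's framework once the BDP reciprocity law is in hand. By contrast, producing enough elements in the dual Selmer group to show that $L_\pp(f)$ divides $\text{char}_\Lambda(X_\pp)$ requires either the full machinery of Eisenstein congruences on a higher-rank unitary group (with all the attendant hypotheses on the residual image, conductor, and local representations at primes dividing $N$) or the transfer from a Rankin--Selberg main conjecture whose proof itself rests on such congruences. Controlling the hypotheses so as to remain within the regime of \cite{BCK-PRconj}, and checking that no extra Tamagawa or congruence factors appear on the $L$-function side, is where the real technical work lies.
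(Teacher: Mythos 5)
The statement you are ``proving'' is stated in the paper as a \emph{conjecture} (Conjecture~\ref{conj:IMC}), and the paper offers no proof of it: it is used only as an input, with the authors citing \cite{BCK-PRconj} for the cases in which it is known, namely under Hypotheses~\ref{hyp:running} together with the condition (\ref{eq:star}). So there is no internal proof to compare against, and your text should not be read as supplying one either: it is an outline that delegates every substantive step to deep external theorems (Howard's anticyclotomic Kolyvagin-system machinery, the BDP explicit reciprocity law, and either Skinner--Urban/Wan-style Eisenstein congruences or a Rankin--Selberg main conjecture). As a survey of how the known cases are established it is broadly accurate in spirit, but it is not an argument.

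Two substantive corrections. First, the route actually taken in \cite{BCK-PRconj} (as the paper indicates by naming Howard's bipartite Euler systems and W.~Zhang's proof of Kolyvagin's conjecture) obtains the ``hard'' divisibility ${\rm char}_\Lambda(X_\pp)\subseteq(L_\pp(f))$ from level-raising congruences and Kolyvagin's conjecture, not from Eisenstein congruences on ${\rm GU}(2,2)$ or ${\rm U}(3,1)$; Wan's unitary-group approach is an alternative with a different (and in places more restrictive) hypothesis set, so presenting the two as interchangeable glosses over exactly the compatibility-of-hypotheses issue you flag at the end. Second, your justification that $(L_\pp(f))$ is a nonzero ideal is wrong as stated: under (\ref{eq:Heeg}) with ${\rm rank}_\bZ E(K)\geqslant 2$ the point $z_K$ is torsion and formula (\ref{eq:bdp}) gives $L_\pp(f)(\mathds{1})=0$ --- this vanishing is the entire subject of the paper. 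Nonvanishing of $L_\pp(f)$ as an element of $\Lambda_{\hat\cO}$ must instead be deduced from the interpolation formula at infinite-order characters together with the nonvanishing results of Cornut--Vatsal (or Hsieh), as the paper does for the Heegner distribution $\theta$ in \S\ref{subsec:conj-BD}.
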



The following lemma will be useful in the following. 
Let
\[
{\rm Sel}_{\rm str}(K,T):={\rm ker}\biggl\{{\rm H}^1(\mathfrak{G}_{K,S},T)\rightarrow\prod_{w}{\rm H}^1(K_{w},T)\biggr\}
\]
be the strict Selmer group, which is clearly contained in $S_p(E/K)$. 

\begin{lem}\label{lem:relation}
Assume that $\Sha(E/K)_{p^\infty}$ is finite. Then 
\[
\Sel_{\pp}(K,T)={\rm Sel}_{\rm str}(K,T)=\Sel_{\overline\pp}(K,T).
\]
In particular, $\Sel_\pp(K,T)$ and $\Sel_{\overline\pp}(K,T)$ are both contained in $S_p(E/K)$ and have $\bZ_p$-rank $r-1$, where $r={\rm rank}_\bZ E(K)$. 
\end{lem}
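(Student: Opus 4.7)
The containments $\Sel_{\rm str}(K,T) \subseteq \Sel_{\qq}(K,T)$ for $\qq \in \{\pp, \overline\pp\}$ are immediate from the definitions, so the content of the lemma is the reverse inclusions together with the inclusion $\Sel_{\qq}(K,T) \subseteq S_p(E/K)$. My plan is to establish these in two steps.

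First, I would show that $\Sel_\pp(K,T) \subseteq S_p(E/K)$ by global duality. Given $c \in \Sel_\pp(K,T)$ and any $d \in \mathrm{Sel}_{p^\infty}(E/K)$, Poitou--Tate reciprocity yields $\sum_v \langle c_v, d_v \rangle_v = 0$. The term at $v = \pp$ vanishes since $c_\pp = 0$, and the terms at $v \neq \pp, \overline\pp$ vanish because $c_v$ and $d_v$ lie in mutually orthogonal Bloch--Kato subspaces under local Tate duality, leaving $\langle c_{\overline\pp}, d_{\overline\pp} \rangle_{\overline\pp} = 0$ for every such $d$. Under the hypothesis that $\Sha(E/K)_{p^\infty}$ is finite, $\mathrm{Sel}_{p^\infty}(E/K)$ agrees with $E(K) \otimes \bQ_p/\bZ_p$ up to a finite discrepancy, so the image of $\mathrm{loc}_{\overline\pp}$ inside $H^1_f(K_{\overline\pp}, E_{p^\infty})$ coincides up to finite groups with the image of the natural map $E(K) \otimes \bZ_p \to E(K_{\overline\pp}) \otimes \bZ_p$. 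Since the $p$-parity input recorded in the excerpt gives $\mathrm{rank}_{\bZ_p} \Sel_{\rm str}(K,T) = r - 1$, this image must have $\bZ_p$-rank $1$ and hence cofinite index in the rank-$1$ target. Local Tate duality then forces $c_{\overline\pp} \in H^1_f(K_{\overline\pp}, T)$, so $c \in S_p(E/K)$. The argument for $\Sel_{\overline\pp}(K,T) \subseteq S_p(E/K)$ is entirely symmetric.

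Second, I would deduce the equalities by analyzing the $\tau$-equivariant localization
\[
(\mathrm{loc}_\pp, \mathrm{loc}_{\overline\pp}) \colon S_p(E/K) \To (E(K_\pp) \oplus E(K_{\overline\pp})) \otimes \bZ_p,
\]
where $\tau \in \mathrm{Gal}(K/\bQ)$ acts on the target by swapping the two factors along the natural identification coming from $\tau \colon K_\pp \simeq K_{\overline\pp}$. The image of this map over $\bQ_p$ has dimension $1$ (again from $\mathrm{rank}\,\Sel_{\rm str} = r - 1$) and is $\tau$-stable, so it must be either the diagonal or the antidiagonal inside $\bQ_p \oplus \bQ_p$. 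In either case $\ker\mathrm{loc}_\pp$ and $\ker\mathrm{loc}_{\overline\pp}$ coincide on $S_p(E/K) \otimes \bQ_p$, and this common kernel equals $\Sel_{\rm str}(K,T) \otimes \bQ_p$. Promoting the identification to the integral level yields the asserted equalities $\Sel_\pp(K,T) = \Sel_{\rm str}(K,T) = \Sel_{\overline\pp}(K,T)$ and the common rank $r - 1$.

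The hard part will be the first step: the inclusion $\Sel_\pp(K,T) \subseteq S_p(E/K)$ is global in nature and relies on both the finiteness of $\Sha$ (to identify the localization of $\mathrm{Sel}_{p^\infty}(E/K)$ with that of $E(K) \otimes \bZ_p$) and on the $p$-parity input (to ensure that the image at $\overline\pp$ is nonzero), so that the Poitou--Tate pairing actually pins down $c_{\overline\pp}$. Once this is in hand, Step 2 is a routine consequence of complex conjugation symmetry and a dimension count.
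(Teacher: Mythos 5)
The strategy you outline (global reciprocity at the two primes above $p$, plus complex-conjugation symmetry) is the right mechanism, and it is essentially the content of the result the paper actually invokes, namely \cite[Lem.~2.3.2]{skinner}; the paper's own proof is just two lines: the $p$-parity conjecture gives that $r$ is odd, hence $r>0$, and the rest is delegated to that citation. But your write-up has a genuine circularity at its core. You justify the crucial claim that ${\rm loc}_{\ppbar}$ has rank-one image (hence finite cokernel in the corank-one target) by appealing to ``the $p$-parity input recorded in the excerpt gives ${\rm rank}_{\Zp}\,{\rm Sel}_{\rm str}(K,T)=r-1$.'' That equality is not a consequence of $p$-parity: parity only yields that $r$ is odd, hence $r\geqslant 1$. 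The statement ${\rm rank}\,{\rm Sel}_{\rm str}(K,T)=r-1$ is equivalent to the image of $S_p(E/K)\rightarrow\prod_{w\vert p}E(K_w)\otimes\Zp$ having rank exactly one, and this is precisely (part of) the conclusion of the lemma you are proving, since the lemma asserts $\Sel_\pp(K,T)={\rm Sel}_{\rm str}(K,T)$ has rank $r-1$. The missing ingredient is the upper bound: under finiteness of $\Sha(E/K)_{p^\infty}$, the image of $S_p(E/K)\otimes\Qp$ in the two-dimensional space $\bigoplus_{w\vert p}E(K_w)\otimes\Qp$ is its own exact annihilator under the sum of the local Tate pairings (because the Selmer group of $T$ and that of $E_{p^\infty}\simeq T^*$ then coincide up to finite index), hence is a line; combined with $r\geqslant 1$ and the $\tau$-symmetry this pins the rank at exactly one. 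Without this self-annihilation step your Step 1 has no way to see that ${\rm loc}_{\ppbar}$ is nonzero on the Selmer group, and your Step 2's dimension count has no starting point. This is exactly what Skinner's lemma supplies.

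Two secondary points. First, in the reciprocity sum the terms at $v\nmid p$ vanish because $d_v=0$ there (for $v\nmid p$ one has $E(K_v)\otimes\Qp/\Zp=0$, so the classical Selmer condition away from $p$ is the strict one), not because $c_v$ is constrained: $\Sel_\pp(K,T)$ imposes no condition away from $p$. Second, even granting the rank input, Step 1 only yields $c_{\ppbar}\in {\rm H}^1_f(K_{\ppbar},T)$, i.e.\ the inclusion $\Sel_\pp(K,T)\subseteq S_p(E/K)$; the asserted equality with ${\rm Sel}_{\rm str}(K,T)$ requires $c_{\ppbar}=0$ (and $c_w=0$ at $w\nmid p$), which is a strictly stronger integral statement that ``promoting the identification to the integral level'' does not supply --- one must pair against the appropriate relaxed Selmer group of $E_{p^\infty}$, again as in the cited lemma.
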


\begin{proof}
By our assumption on $\Sha(E/K)$, hypothesis (\ref{eq:Heeg}) and the $p$-parity conjecture \cite{nekovarII} imply that $r:={\rm rank}_{\bZ}E(K)$ is odd, so in particular $r>0$. 
Thus the image of restriction map
\begin{equation}\label{eq:res-p}
S_p(E/K)\rightarrow\prod_{w\vert p}E(K_w)\otimes\bZ_p
\end{equation}
has $\bZ_p$-rank one, and the result follows from \cite[Lem.~2.3.2]{skinner}.
\end{proof}






\subsection{Conjectures of Bertolini--Darmon}\label{subsec:conj-BD}

In this section, we recall some of the conjectures of Birch--Swinnerton-Dyer type formulated by Bertolini--Darmon in \cite{BDmumford-tate}. These conjectures will guide our formulation in $\S\ref{subsec:BSDconj}$ of analogous statements for the $p$-adic $L$-functions $L_\pp(f)$ and $\mathscr{L}_\pp(f)$ of Bertolini--Darmon--Prasanna. 

As in the Introduction, we assume that the elliptic curve $E/\bQ$ has good ordinary reduction at $p>2$ and that $K$ is an imaginary quadratic field of discriminant $D_K$ prime to $Np$ in which $p=\pp\ppbar$ splits. 
However, rather than hypothesis (\ref{eq:Heeg}) from the Introduction, we assume that writing $N$ as the product 
\[
N=N^+N^-,
\]
with $N^+$ (resp. $N^-$) divisible only by primes which are split (resp. inert) in $K$, we have 
\begin{equation}\label{eq:gen-H}
\textrm{$N^-$ is the squarefree product of an even number of primes.}\tag{gen-H}
\end{equation}

This condition still guarantees that the root number of $E/K$ is $-1$, as well as the presence of Heegner points on $E$ defined over the different layers of the anticyclotomic $\bZ_p$-extension $K_\infty/K$.

More precisely, let $X_{N^+,N^-}$ be the Shimura curve (with the cusps added when $N^-=1$, so $X_{N,1}=X_0(N)$) attached to the quaternion algebra $B/\bQ$ of discriminant $N^-$ and an Eichler order $R\subset B$ of level $N^+$. The curve $X_{N^+,N^-}$ has a canonical model over $\bQ$, and we let $J(X_{N^+,N^-})_{/\bQ}$ denote its Jacobian. By \cite{BCDT}, there is a modular parametrization 
\[ 
\varphi:X_0(N)\rightarrow E.
\]
This induces a map $J(X_0(N))\rightarrow E$ by Albanese functoriality, which by the Jacquet--Langlands correspondence together with Faltings' isogeny theorem gives rise to a map 
\begin{equation}\label{eq:param}
\varphi_*:J(X_{N^+,N^-})\rightarrow E.
\end{equation}
Similarly as in \cite[p.~425]{BDmumford-tate}, after possibly changing $E$ within its isogeny class, we assume that $E$ is an optimal quotient of $J(X_{N^+,N^-})$, meaning that the kernel of $(\ref{eq:param})$ is connected.

When $N^-\neq 1$, lacking the existence of a natural rational base point on $X_{N^+,N^-}$, we choose  an auxiliary prime $\ell_0$ 
and consider (following \cite[\S{4.2}]{JSW}) the embedding
\begin{equation}\label{eq:ell0}
\iota_{N^+,N^-}:X_{N^+,N^-}\rightarrow J(X_{N^-,N^-})
,\quad x\mapsto(T_{\ell_0}-\ell_0-1)[x].
\end{equation}

Let $K[c]$ be the ring class field of $K$ of conductor $c$. Then for every $c$ prime to $ND_K$, there are CM points $h_c\in X_{N^+,N^-}(K[c])$ (as described in e.g. \cite[Prop.~1.2.1]{howard-PhD-II}) satisfying the relations
\begin{equation}\label{eq:norm-rel}
\begin{aligned}
{\rm Norm}_{K[c\ell]/K[c]}(h_c)=\left\{
\begin{array}{ll}
T_\ell\cdot h_c&\textrm{if $\ell\nmid c$ is inert in $K$,}\\
T_\ell\cdot h_c-\sigma_\ell h_c^{}-\sigma_\ell^*h_c^{} &\textrm{if $\ell\nmid c$ splits in $K$,}\\
T_\ell\cdot h_c-h_{c/\ell} &\textrm{if $\ell\mid c$,}
\end{array}
\right.
\end{aligned}
\end{equation}
where $\sigma_{\ell}$ and $\sigma_\ell^*$ denote the Frobenius elements of the primes in $K$ above $\ell$. 
Assume from now on that $E_p$ is irreducible as a $G_\bQ$-module, and choose the prime $\ell_0$ in $(\ref{eq:ell0})$ so that $a_{\ell_0}(E)-\ell_0-1\not\in p\bZ$. Define $y_n\in E(K[p^n])\otimes\bZ_p$ by
\[
y_n:=\frac{1}{a_{\ell_0}(E)-\ell_0-1}\cdot\varphi_*(\iota_{N^+,N^-}(h_{p^n})),
\]
and letting $\alpha_p$ be the $p$-adic unit root of the polynomial $X^2-a_p(E)X+p$, define the \emph{regularized Heegner point} of conductor $p^n$ by
\begin{align*}
z_n&:=\frac{1}{\alpha^{n}}\cdot y_n-\frac{1}{\alpha^{n+1}}\cdot y_{n-1},\quad\textrm{if $n\geqslant 1$},\\
z_0&:=u_K^{-1}\cdot(1-(\sigma_p+\sigma_{p}^*)\alpha^{-1}+\alpha^{-2})\cdot y_0.
\end{align*}
Then one immediately checks from $(\ref{eq:norm-rel})$ that the points $z_n$ are norm-compatible. For each $n\geqslant 0$, we then set
\begin{equation}\label{def:regularised}
\mathbf{z}_n:={\rm Norm}_{K[p^m]/K_n}(z_m),
\end{equation}
where $m\gg 0$ is such that $K_n\subset K[p^m]$, and letting  $Z_p:=E(K_\infty)\otimes\bZ_p$ we define $\theta_n\in Z_p[\Gamma_n]$ by
\[
\theta_n:=\sum_{\sigma\in\Gamma_n}\mathbf{z}_n^\sigma\otimes\sigma^{-1}.
\]
These elements are compatible under the natural projections $Z_p[\Gamma_{n+1}]\rightarrow Z_p[\Gamma_n]$, and in the limit they define the ``Heegner distribution''
\begin{equation}\label{def:Heeg-dist}
\theta=\theta_\infty:=\varprojlim_n\theta_n\in Z_p[[\Gamma_\infty]]. 
\end{equation}

Let $J$ be the augmentation ideal of $\Lambda=\bZ_p[[\Gamma_\infty]]$, and define the \emph{order of vanishing} of $\theta$ by
\[
{\rm ord}_J\theta:=\max\left\{\rho\in\bZ_{\geqslant 0}\;\colon\;\theta\in Z_p\otimes_{\bZ_p} J^\rho\right\}.
\]
The work of Cornut--Vatsal \cite{CV-dur} implies that $\theta$ is a nonzero element in $Z_p[[\Gamma_\infty]]$, and so its order of vanishing is well-defined. 
%

The following conjecture is the ``indefinite case'' of \cite[Conj.~4.1]{BDmumford-tate}, where we let $E(K)^\pm$ be the $\pm$-eigenspaces of $E(K)$ under the action of complex conjugation. 

\begin{conj}[Bertolini--Darmon]\label{conj:BDconj-rank}
We have
\[
{\rm ord}_J\theta=\max\{r^+,r^-\}-1,
\]
where $r^{\pm}:={\rm rank}_{\bZ}E(K)^\pm$. 
\end{conj}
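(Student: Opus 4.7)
The strategy is to relate the Heegner distribution $\theta$ to the Bertolini--Darmon--Prasanna $p$-adic $L$-function $\mathscr{L}_\pp(f)$, whose order of vanishing at the trivial character is controlled by the results of Section~3, and then transfer the resulting bounds back to $\theta$. The argument breaks into three steps.

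First, I would establish a Perrin-Riou--type ``bridge formula'' expressing $\mathscr{L}_\pp(f)$ as the image of $\theta$ under the big logarithm map
\[
\mathcal{L} \colon E(K_\infty) \otimes \bZ_p \longrightarrow \Lambda_{\hat{\cO}}
\]
obtained by composing local restriction at $\pp$ with the formal group logarithm $\log_{\omega_E}$ and extending $\Lambda$-equivariantly to the anticyclotomic Iwasawa module. This would be verified by matching the BDP interpolation property of $\mathscr{L}_\pp(f)$ against the norm-compatibility relations $(\ref{def:regularised})$--$(\ref{def:Heeg-dist})$ defining the regularized Heegner distribution. Under the hypothesis $p \nmid \#E(\mathbf{F}_p)$ (condition (iii) of Theorem~\ref{intro-thm}), the map $\mathcal{L}$ is injective on the $\Lambda$-submodule generated by $\theta$, yielding $\mathrm{ord}_J \theta = \mathrm{ord}_J \mathscr{L}_\pp(f)$; and since $L_\pp(f) = \mathscr{L}_\pp(f)^2$ we obtain $\mathrm{ord}_J L_\pp(f) = 2 \cdot \mathrm{ord}_J \theta$.

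Second, Theorem~\ref{intro-thm}, taken in its refined form for the generalized regulator $\mathrm{Reg}_{\mathrm{der},\pp}$ and combined with the Iwasawa--Greenberg main conjecture of Conjecture~\ref{conj:IMC} (known under mild hypotheses), shows that $L_\pp(f) \in J^{2\rho}$, with leading coefficient in $J^{2\rho}/J^{2\rho+1}$ given up to a $p$-adic unit by $\mathrm{Reg}_{\mathrm{der},\pp}$, where $\rho := \max\{r^+,r^-\}-1$. The bridge formula from Step~1 then yields the lower bound $\mathrm{ord}_J \theta \geq \rho$, which is the content of Corollary~\ref{cor:C}.

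The matching upper bound $\mathrm{ord}_J \theta \leq \rho$ is the main obstacle. Via the same chain of equivalences it reduces to the non-vanishing of $\mathrm{Reg}_{\mathrm{der},\pp}$ as an element of $J^{2\rho}/J^{2\rho+1}$, i.e., to the maximal non-degeneracy of the anticyclotomic derived $p$-adic height pairing conjectured by Mazur and Bertolini--Darmon. This non-degeneracy is deep and not known in general; it is reachable only in the regime $|r^+-r^-| \leq 1$, in which the derived regulator collapses to a classical $p$-adic regulator whose non-vanishing can be handled by standard means. Consequently, the plan delivers the conjectural equality unconditionally in that range, and the ``rank part'' inequality in general.
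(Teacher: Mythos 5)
The statement you are addressing is stated in the paper as a \emph{conjecture} of Bertolini--Darmon; the paper does not prove it. What the paper proves is the single inequality ${\rm ord}_J\theta\geqslant\max\{r^+,r^-\}-1$ (Corollary~\ref{cor:C}), and for that inequality your plan follows essentially the paper's route: the ``bridge formula'' of your Step~1 is the explicit reciprocity law $\mathfrak{L}_\pp(\mathbf{z}_\infty)=-\mathscr{L}_\pp(f)\cdot\sigma_{-1,\pp}$ invoked in Proposition~\ref{prop:BD-BSD}, and your Step~2 is Theorem~\ref{thm:A} combined with the Iwasawa--Greenberg main conjecture, i.e.\ Corollaries~\ref{cor:B} and \ref{cor:C}. (You should also record the extra hypotheses needed there: finiteness of $\Sha(E/K_n)_{p^\infty}$ for all $n$, $p\nmid N\varphi(ND_K)$, and condition $(\star)$.)

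Two points in your plan are genuinely off. First, you claim the bridge yields the \emph{equality} ${\rm ord}_J\theta={\rm ord}_J\mathscr{L}_\pp(f)$ from injectivity of the big logarithm. But $\mathfrak{L}_\pp$ is defined on ${\rm Sel}(K_\infty,T)_{\hat{\cO}}$, not on $Z_p[[\Gamma_\infty]]$, and what the paper extracts (using that $\mathfrak{L}_\pp\otimes\bQ_p$ is an isomorphism, that $p\nmid\mathbf{z}_\infty$ by Cornut, and Weierstrass preparation) is only the chain $\mathscr{L}_\pp(f)\in J^\rho\Rightarrow\mathbf{z}_\infty\in J^\rho\,\mathcal{U}(E/K_\infty)\Rightarrow\theta=\Psi_\infty(\mathbf{z}_\infty)\in Z_p\otimes J^\rho$, i.e.\ ${\rm ord}_J\theta\geqslant{\rm ord}_J\mathscr{L}_\pp(f)$. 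The reverse inequality, which your upper-bound step would require, does not follow: $\Psi_\infty$ is merely $\Lambda$-linear, so $\theta\in Z_p\otimes J^\nu$ does not force $\mathbf{z}_\infty\in J^\nu\mathcal{U}(E/K_\infty)$, hence does not force $\mathscr{L}_\pp(f)\in J^\nu$. Second, your assertion that the upper bound is ``handled by standard means'' when $\vert r^+-r^-\vert\leqslant 1$ overstates what is known: for $r\geqslant 3$ it requires the non-degeneracy of the anticyclotomic Mazur--Tate height on ${\rm Sel}_{\rm str}(K,T)$, which is exactly the (open) Conjecture~\ref{conj:mazur-BD}; only the case $r=1$, where the regulator is an empty determinant and the statement reduces to $\log_{\omega_E}(y_\pp)\neq 0$, is accessible. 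Correctly trimmed, your plan therefore proves exactly what the paper proves --- the lower bound of Corollary~\ref{cor:C} --- and nothing more.
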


Let $\theta^*$ denote the image of $\theta$ under the involution of $Z_p[[\Gamma_\infty]]$ given by $\gamma\mapsto\gamma^{-1}$ for $\gamma\in\Gamma_\infty$, and set
\[
\mathscr{L}:=\theta\otimes\theta^*\in Z_p^{\otimes 2}[[\Gamma_\infty]].
\] 

\begin{lem}\label{lem:incl}
Let $\rho={\rm ord}_J\theta$. Then the natural image $\bar{\mathscr{L}}$ 
of $\mathscr{L}$ in $Z_p^{\otimes 2}\otimes_{\bZ_p}(J^{2\rho}/J^{2\rho+1})$ is contained in the image of the map
\[
E(K)^{\otimes 2}\otimes(J^{2\rho}/J^{2\rho+1})\rightarrow Z_p^{\otimes 2}\otimes_{\bZ_p}(J^{2\rho}/J^{2\rho+1}).
\]
\end{lem}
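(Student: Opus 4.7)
The plan is to reduce the statement for $\mathscr{L}$ to a separate claim about each of $\theta$ and $\theta^*$. Since the involution $\gamma \mapsto \gamma^{-1}$ on $\Lambda$ preserves $J$, one has $\mathrm{ord}_J(\theta^*) = \rho$ as well, and under the natural multiplication $(J^\rho/J^{\rho+1}) \otimes_{\bZ_p} (J^\rho/J^{\rho+1}) \to J^{2\rho}/J^{2\rho+1}$, the image $\bar{\mathscr{L}}$ is identified with $\bar\theta \otimes \bar{\theta^*}$, where $\bar\theta$ and $\bar{\theta^*}$ denote the leading-term images of $\theta$ and $\theta^*$ in $Z_p \otimes_{\bZ_p} (J^\rho/J^{\rho+1})$. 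Hence it suffices to prove that each of $\bar\theta$ and $\bar{\theta^*}$ lies in the image of $E(K) \otimes (J^\rho/J^{\rho+1})$.

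The key input is a $\Gamma_\infty$-equivariance satisfied by $\theta$. Namely, $\gamma \cdot \theta = \theta \cdot \gamma$ for all $\gamma \in \Gamma_\infty$, where the left action is through the Galois action of $\gamma$ on $Z_p = E(K_\infty) \otimes \bZ_p$ and the right action is through multiplication in $\Lambda$. This is a direct change of variables $\sigma \mapsto \gamma\sigma$ in the defining formula $\theta_n = \sum_\sigma \mathbf{z}_n^\sigma \otimes \sigma^{-1}$, which passes to the inverse limit; the analogous computation for $\theta^*$ yields $\gamma \cdot \theta^* = \theta^* \cdot \gamma^{-1}$.

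To extract the leading coefficient, fix a topological generator $\gamma$ of $\Gamma_\infty$ and let $T = \gamma - 1$, identifying $\Lambda$ with $\bZ_p[[T]]$ and $J$ with $(T)$. Since $T$ is a nonzerodivisor, the hypothesis $\theta \in Z_p \otimes J^\rho$ admits a unique factorization $\theta = T^\rho \cdot \eta$ with $\eta \in Z_p[[T]]$. The factor $T^\rho$ acts on the second tensor factor only, hence commutes with the Galois action on $Z_p$; canceling it from the equivariance gives $\gamma \cdot \eta = \eta \cdot \gamma$. Applying the augmentation $\Lambda \to \bZ_p$ (sending $T \mapsto 0$, equivalently $\gamma \mapsto 1$) on the second factor then yields $\gamma \cdot \bar\eta = \bar\eta$ in $Z_p$, so $\bar\eta \in Z_p^{\Gamma_\infty}$. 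By $\bZ_p$-flatness of the inclusion $E(K_n)^{\Gamma_n} \hookrightarrow E(K_n)$, one has $Z_p^{\Gamma_\infty} = E(K) \otimes \bZ_p$. Identifying $J^\rho/J^{\rho+1} \cong \bZ_p \cdot T^\rho$, the element $\bar\eta$ represents $\bar\theta$, establishing the desired containment.

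The same argument applies to $\theta^*$: one obtains $\gamma \cdot \eta^* = \eta^* \cdot \gamma^{-1}$, and augmentation (which still sends $\gamma^{-1} \mapsto 1$) again forces $\bar{\eta^*} \in E(K) \otimes \bZ_p$. The main bookkeeping challenge is keeping track of the two commuting $\Gamma_\infty$-actions on $Z_p \otimes \Lambda$---one via the Galois action on the coefficient module $Z_p$, the other via multiplication in $\Lambda$---and justifying the factorization $\theta = T^\rho \eta$; both become transparent once $\theta$ is viewed as a formal power series in $T$ with coefficients in $Z_p$, where division by $T^\rho$ and the augmentation $T \mapsto 0$ have unambiguous meaning.
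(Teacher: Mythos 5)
Your proposal is correct and follows essentially the same route as the paper: the paper's one-line proof reduces the lemma to the fact (cited from Bertolini--Darmon, Lem.~2.14) that the leading term of $\theta$ in $Z_p\otimes_{\bZ_p}(J^{\rho}/J^{\rho+1})$ is fixed by $\Gamma_\infty$ and hence lies in $Z_p^{\Gamma_\infty}=E(K)\otimes\bZ_p$, which is exactly what your equivariance-plus-division-by-$T^\rho$ argument establishes. You have simply supplied the proof of the cited input rather than quoting it, and the remaining bookkeeping ($\mathrm{ord}_J\theta^*=\rho$ and $\bar{\mathscr{L}}=\bar\theta\otimes\bar{\theta^*}$) is handled correctly.
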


\begin{proof}
This follows from the fact that the natural image 
of $\theta$ in $Z_p\otimes_{\bZ_p}(J^{\rho}/J^{\rho+1})$ is fixed by $\Gamma_\infty$ (see \cite[Lem.~2.14]{BDmumford-tate}).
\end{proof}

Let $r={\rm rank}_{\bZ}E(K)$. Since clearly 
\[
2(\max\{r^+,r^-\}-1)\geqslant r-1,
\] 
by Lemma~\ref{lem:incl} we see that Conjecture~\ref{conj:BDconj-rank} predicts in particular the inclusion $\bar{\mathscr{L}}\in E(K)^{\otimes 2}\otimes(J^{r-1}/J^r)$. The conjectures of Bertolini--Darmon also predict an expression for $\bar{\mathscr{L}}$ 
in terms of the following ``enhanced'' regulator associated to the Mazur--Tate anticyclotomic $p$-adic height pairing
\[
h_p^{\tt MT}:E(K)\times E(K)\rightarrow(J/J^2)\otimes\bQ.
\]

\begin{defn}\label{def:enhanced}
Let $P_1,\dots,P_r$ be a basis for $E(K)/E(K)_{\rm tors}$ and set $t'=[E(K)\colon\bZ P_1+\cdots+\bZ P_r]$. 
The \emph{enhanced regulator} $\widetilde{\rm Reg}$ is the element of $E(K)^{\otimes 2}\otimes_{}(J^{r-1}/J^r)\otimes\bQ$ defined by
\[
\widetilde{\rm Reg}:=\frac{1}{t'^2}\sum_{i,j=1}^r(-1)^{i+j}P_i\otimes P_j\otimes R_{i,j},
\]
where 
$R_{i,j}$ is the $(i,j)$-minor of the matrix $(h_p^{\tt MT}(P_i,P_j))_{1\leqslant i,j\leqslant r}$.
\end{defn}

The next remark will be important in the following.

\begin{rem}\label{rem:isotropic}
The non-trivial automorphism $\tau\in{\rm Gal}(K/\bQ)$ acts as multiplication by $-1$ on $\Gamma_\infty$. Viewing $h_p^{\tt MT}$ as valued in $\Gamma_\infty\otimes_{}\bQ$ via the  natural identification $J/J^2\simeq\Gamma_\infty$, the Galois-equivariance of $h_p^{\tt MT}$ implies that
\[
h_p^{\tt MT}(\tau x,\tau y)=h_p^{\tt MT}(x,y)^\tau=-h_p^{\tt MT}(x,y).
\]
It follows that the $\tau$-eigenspaces $E(K)^\pm$ are isotropic for $h_p^{\tt MT}$, and so the null-space of $h_p^{\tt MT}$ 
has rank at least $\vert r^+-r^-\vert$ (which should always be positive, since by (\ref{eq:gen-H}) the rank $r=r^++r^-$ should be odd). 
\end{rem}

The following is the ``non-exceptional case\footnote{meaning that $E/K$ has good ordinary or non-split multiplicative reduction at every prime above $p$}'' of \cite[Conj.~4.5]{BDmumford-tate}. 

\begin{conj}[Bertolini--Darmon]\label{conj:BDconj-weakLC}
Let $\bar{\mathscr{L}}$ be the natural image of $\mathscr{L}$ in $E(K)^{\otimes 2}\otimes_{}(J^{r-1}/J^{r})$. Then
\[
\bar{\mathscr{L}}=\biggl(\frac{1-a_p(E)+p}{p}\biggr)^2\cdot\widetilde{\rm Reg}\cdot\#\Sha(E/K)_{}\cdot\prod_{\ell\vert N^+}c_\ell^2,
\]
where 
$c_\ell$ is the 
Tamagawa number of $E/\bQ_\ell$.
\end{conj}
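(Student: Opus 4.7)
Since the statement is a conjecture of Bertolini--Darmon, a full proof is not available; I will outline the strategy by which the Iwasawa-theoretic machinery of the paper allows one to attack (a weak form of) it. The guiding principle is that the Heegner distribution $\theta$ and the BDP $p$-adic $L$-function $L_\pp(f)$ are linked by a $p$-adic Waldspurger / Gross--Zagier type formula: specialization of $\theta$ at a character $\chi$ of $\Gamma_\infty$ and application of $\log_{\omega_E}$ should recover (the square root of) the value $L_\pp(f)(\chi)$ up to explicit fudge factors. Symbolically, under suitable local identifications,
\[
(\log_{\omega_E}\otimes\log_{\omega_E})(\mathscr{L})\;=\;c\cdot L_\pp(f)
\]
for an explicit constant $c$ involving $u_K$, $c_E$, and the Euler factor $(1-a_p(E)+p)/p$. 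This is modelled on the $p$-adic Gross--Zagier formula (\ref{eq:bdp}), upgraded from the value at $\mathds{1}$ to the level of distributions.

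\textbf{Key steps.} First, I would make precise the above interpolative comparison between $\theta\otimes\theta^*$ and $L_\pp(f)$, by extending formula (\ref{eq:bdp}) to twists by anticyclotomic characters $\chi$ of $\Gamma_\infty$ in the range where $\chi$ is either in the interpolation range of $L_\pp(f)$ (to pin down $L_\pp(f)$) or outside of it (to access values where Heegner points are relevant). Second, I would invoke the Iwasawa--Greenberg main conjecture (Conjecture~\ref{conj:IMC}), which gives $(L_\pp(f))={\rm char}_\Lambda(\Sel_\pp(K_\infty,E_{p^\infty})^\vee)\Lambda_{\hat{\cO}}$; under the running hypotheses this is known by \cite{BCK-PRconj}. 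Third, I would run an anticyclotomic control-theorem argument in the spirit of Theorem~\ref{intro-thm} (or rather its refinement accounting for the systematic degeneracies noted in Remark~\ref{rem:isotropic}) to compute the image of this characteristic ideal in $J^{r-1}/J^r$ in terms of ${\rm Reg}_\pp$, $\#\Sha(E/K)_{p^\infty}$, and the Tamagawa factors $c_\ell$, $\ell\mid N^+$. Fourth, I would transfer this formula back to $\bar{\mathscr{L}}$ via the logarithm comparison from the first step, and use the Galois-equivariance of the height pairing (Remark~\ref{rem:isotropic}) together with a linear algebra computation to rewrite ${\rm Reg}_\pp\cdot\log_{\omega_E}(y_\pp)^2\cdot t^{-2}$ in the enhanced-regulator form $\widetilde{\rm Reg}$ of Definition~\ref{def:enhanced}, noting that the minors $R_{i,j}$ precisely encode the reorganization of the Mazur--Tate pairing on ${\rm Sel}_{\rm str}(K,T)$ together with the Heegner-point entry along the direction spanned by $y_\pp$.

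\textbf{Main obstacle.} The most serious difficulty is the loss of the $p$-adic unit in the Iwasawa-theoretic method: both the divisibility in the main conjecture and the control-theorem computation only identify characteristic ideals, so the resulting equality on $\bar{L}_\pp(f)$ (hence on $\bar{\mathscr{L}}$) is valid only up to $\bZ_p^\times$, whereas Conjecture~\ref{conj:BDconj-weakLC} is an equality on the nose. Removing this ambiguity would require either a refined main conjecture tracking the unit, or a direct computation of $\mathscr{L}$ at $\mathds{1}$ in the spirit of the exact formula (\ref{eq:bdp}). A secondary difficulty, which becomes the main one when $|r^+-r^-|>1$, is that Remark~\ref{rem:isotropic} forces $\bar{\mathscr{L}}=0$ in $E(K)^{\otimes 2}\otimes(J^{r-1}/J^r)$, so Conjecture~\ref{conj:BDconj-weakLC} becomes trivially $0=0$ and all arithmetic content is pushed into higher powers of $J$; this is exactly the phenomenon the authors address by passing to derived $p$-adic heights and Conjecture~\ref{conj:BSD}, and I expect any serious attack on the leading-term conjecture to proceed through that refinement, with the present weak form extracted as a corollary. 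Finally, I expect the best attainable unconditional conclusion (as foreshadowed by Corollary~\ref{cor:C}) to be an inequality ``${\rm ord}_J\theta\geqslant\max\{r^+,r^-\}-1$'' in the rank part, together with the leading coefficient formula up to a $p$-adic unit when $|r^+-r^-|=1$.
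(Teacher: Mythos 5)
This statement is a conjecture of Bertolini--Darmon which the paper does not prove; you correctly recognize this, and your outlined strategy — linking $\theta\otimes\theta^*$ to $L_\pp(f)$ via a distribution-level logarithm comparison, invoking the main conjecture of \cite{BCK-PRconj}, and running an anticyclotomic control theorem — is exactly the route the paper takes to its partial results (the reciprocity law $\mathfrak{L}_\pp(\mathbf{z}_\infty)=-\mathscr{L}_\pp(f)\cdot\sigma_{-1,\pp}$ in Proposition~\ref{prop:BD-BSD}, Theorem~\ref{thm:A}, and Corollaries~\ref{cor:B} and \ref{cor:C}). Your assessment of what is attainable is also accurate: the paper obtains only the inequality ${\rm ord}_J\theta\geqslant\max\{r^+,r^-\}-1$ and leading-coefficient formulas up to a $p$-adic unit for its own conjectures, with the degeneracy when $\vert r^+-r^-\vert>1$ handled via derived heights precisely as you describe.
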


As noted in \cite[p.~447]{BDmumford-tate}, when $\vert r^+-r^-\vert>1$   
Conjecture~\ref{conj:BDconj-weakLC} reduces to the prediction ``$0=0$''. Indeed, $2(\max\{r^+,r^-\}-1)$ is then strictly larger than $r-1$, and so by Conjecture~\ref{conj:BDconj-rank} 
the image of $\mathscr{L}$ in $E(K)^{\otimes 2}\otimes(J^{r-1}/J^r)$ should vanish, while on the other hand by the isotropy of $E(K)^\pm$ under $h_p^{\tt MT}$ all the minors $R_{i,j}$ in the definition of $\widetilde{\rm Reg}$, and hence $\widetilde{\rm Reg}$ itself, 
 also vanish (see \cite[Lem.~3.2]{BDmumford-tate}).
As explained below, a refinement of Conjecture~\ref{conj:BDconj-weakLC}, predicting a formula for the natural image of $\mathscr{L}$ in $E(K)^{\otimes 2}\otimes(J^{2\rho}/J^{2\rho+1})$, which should be thought of as the ``leading coefficient'' of $\mathscr{L}$, can be given in terms of the \emph{derived} $p$-adic height pairings introduced by Bertolini--Darmon \cite{BD-derived-DMJ, BD-derived-AJM}.

\begin{rem}\label{eq:rem-refined}
Such refinement of Conjecture~\ref{conj:BDconj-weakLC} seems to not have been explicitly stated in the literature. Even though the formulation of such refinement appears to be quite clear in light of the conjectures explicitly stated in \cite{BDmumford-tate} and \cite{BD-derived-AJM}, any inaccuracies in the conjectures below should be  blamed only on the authors of this paper. 
\end{rem}

Assume from now on that $\Sha(E/K)_{p^\infty}$ is finite and that:
\begin{itemize}
\item[(i)] $\bar{\rho}_{E,p}:G_\bQ\rightarrow{\rm Aut}_{\mathbf{F}_p}(E_p)$ is surjective. 
\item[(ii)] $p\nmid\#E(\mathbf{F}_p)$. 
\item[(iii)] $E_p$ is ramified at every prime $\ell\vert N$.
\end{itemize}
Note that (ii) amounts to the condition $a_p(E)\not\equiv{1}\pmod{p}$,
and condition (i) implies that $E$ has no CM. In  particular, these assumptions imply that $S_p(E/K)\simeq E(K)\otimes\bZ_p$ is a free $\bZ_p$-module of rank $r$, and the pair $(E,K)$ is ``generic'' in the terminology of \cite{mazur-ICM84}. 

By \cite[\S{2.4}]{BD-derived-AJM}, 
there is a filtration
\begin{equation}\label{eq:fil}
S_p(E/K)=\Sar_p^{(1)}\supset\Sar_p^{(2)}\supset\cdots\supset \Sar_p^{(p)},
\end{equation}
and a sequence of ``derived $p$-adic height pairings'' 
\[
h_p^{(k)}:\Sar_p^{(k)}\times\Sar_p^{(k)}\rightarrow (J^k/J^{k+1})\otimes_{}\bQ,\quad\textrm{for $1\leqslant k\leqslant p-1$,}
\] 
such that $\Sar_p^{(k+1)}$ is the null-space of $h_p^{(k)}$, with $h_p^{(1)}=h_p^{\tt MT}$. 
By Remark~\ref{rem:isotropic}, 
$\Sar_p^{(2)}$ has $\bZ_p$-rank at least $\vert r^+-r^-\vert$, and by construction the subspace of universal norms
\[
US_p(E/K):=\bigcap_{n\geqslant 1}{\rm cor}_{K_n/K}(S_p(E/K_n))
\]
is contained in the null-space of all $h_p^{(k)}$. The work of Cornut--Vatsal implies that $US_p(E/K)\simeq\Z_p$.

The expected ``maximal non-degeneracy'' of $h_p^{\tt MT}$ predicts the following  
(see \cite[Conj.~3.3, Conj.~3.8]{BD-derived-AJM}).

\begin{conj}[Mazur, Bertolini--Darmon]\label{conj:mazur-BD}
Under the above hypotheses we have
\[
{\rm rank}_{\bZ_p}\Sar_p^{(k)}=\left\{
\begin{array}{ll}
\vert r^+-r^-\vert &\textrm{if $k=2$,}\\[0.1cm]
1&\textrm{if $k\geqslant 3$,}
\end{array}
\right.
\]
and in fact $\Sar_p^{(3)}=US_p(E/K)$. 
\end{conj}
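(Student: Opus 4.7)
The plan is to bracket each $\Sar_p^{(k)}$ by matching lower and upper bounds, using the complex-conjugation symmetry of the anticyclotomic heights, the universal-norm structure supplied by Cornut--Vatsal, and a maximal non-degeneracy statement for the derived pairings (modulo their forced degeneracies). The ``$\geq$'' direction is essentially in hand: Remark~\ref{rem:isotropic} already gives $\mathrm{rank}_{\Z_p}\Sar_p^{(2)}\geq|r^+-r^-|$, and the fact that $US_p(E/K)\subseteq \Sar_p^{(k)}$ for all $k$, combined with $US_p(E/K)\simeq\Z_p$, gives $\mathrm{rank}_{\Z_p}\Sar_p^{(k)}\geq 1$ for $k\geq 2$, plus the inclusion $US_p(E/K)\subseteq\Sar_p^{(3)}$.

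For the upper bound on $\Sar_p^{(2)}$, I would exploit the eigenspace decomposition $S_p(E/K)\otimes\bQ = E(K)^+_{\bQ}\oplus E(K)^-_{\bQ}$ under $\mathrm{Gal}(K/\bQ)$. The anti-equivariance $h_p^{\tt MT}(\tau x,\tau y)=-h_p^{\tt MT}(x,y)$ forces the Gram matrix in any eigenbasis to have vanishing $r^\pm\times r^\pm$ diagonal blocks and a single $r^+\times r^-$ off-diagonal block $M$; the null-space of $h_p^{\tt MT}$ then has rank exactly $|r^+-r^-|$ if and only if $M$ has full rank $\min\{r^+,r^-\}$. So the essential analytic input is a maximal non-degeneracy statement for the mixed-sign Mazur--Tate pairing. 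When $|r^+-r^-|=1$ this already forces $\Sar_p^{(2)}=US_p(E/K)$ and the conclusion for $k\geq 3$ is automatic; when $|r^+-r^-|>1$, I would then restrict $h_p^{(2)}$ to $\Sar_p^{(2)}$, exploit its inherited symmetry and Galois-equivariance together with an analogous maximal non-degeneracy modulo $US_p(E/K)$ to cut $\Sar_p^{(3)}$ down to $US_p(E/K)$, and use that all higher $h_p^{(k)}$ vanish on universal norms by construction.

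The principal obstacle is precisely the non-degeneracy inputs just invoked. Non-degeneracy of the Mazur--Tate $p$-adic height pairing is a classical open problem, and the analogous cyclotomic statement (Schneider's conjecture) is not known in general either; the predicted \emph{maximal} non-degeneracy of the \emph{derived} anticyclotomic heights is strictly stronger still. A realistic route toward partial results would combine Iwasawa-theoretic control of universal norms (Howard, Cornut--Vatsal), the Iwasawa--Greenberg main conjecture (Conjecture~\ref{conj:IMC}), and the Bertolini--Darmon order-of-vanishing prediction (Conjecture~\ref{conj:BDconj-rank}) to translate rank calculations for $\Sar_p^{(k)}$ into analytic statements about $L_\pp(f)$ at $\mathds{1}$. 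Unconditional progress at present appears confined to cases where one of $r^\pm$ vanishes, or to small-rank regimes accessible through Heegner-point methods.
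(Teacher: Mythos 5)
The statement you are asked about is a \emph{conjecture} (attributed to Mazur and Bertolini--Darmon, cf.\ \cite[Conj.~3.3, Conj.~3.8]{BD-derived-AJM}); the paper offers no proof of it and uses it only as a hypothesis or as motivation for the definitions of the derived regulators. Your proposal, as you yourself acknowledge in the final paragraph, is not a proof: the ``upper bound'' half of your bracketing argument rests on a ``maximal non-degeneracy statement for the mixed-sign Mazur--Tate pairing'' and its analogue for $h_p^{(2)}$ modulo universal norms, and these statements \emph{are} the content of the conjecture (the rank of the null-space of $h_p^{\tt MT}$ equals $|r^+-r^-|$ if and only if the off-diagonal block $M$ has full rank $\min\{r^+,r^-\}$, which is exactly the assertion $\mathrm{rank}_{\Z_p}\Sar_p^{(2)}=|r^+-r^-|$). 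So the argument is circular at the decisive step, and no unconditional proof is currently available.

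What you do get right, and what is genuinely established in the paper and its references, is the ``$\geqslant$'' direction: the isotropy of $E(K)^{\pm}$ under $h_p^{\tt MT}$ (Remark~\ref{rem:isotropic}) gives $\mathrm{rank}_{\Z_p}\Sar_p^{(2)}\geqslant|r^+-r^-|$, and the containment of the rank-one module $US_p(E/K)$ in the null-space of every $h_p^{(k)}$ gives $\mathrm{rank}_{\Z_p}\Sar_p^{(k)}\geqslant 1$ and $US_p(E/K)\subseteq\Sar_p^{(3)}$. Your observation about the block structure of the Gram matrix in an eigenbasis is also the correct way to see why the conjectural equality is equivalent to maximal non-degeneracy. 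Had this been presented as a proof of the conjecture it would be a serious error; presented as an analysis of what is known versus what is open, it is accurate, but you should be explicit up front that the target is an open conjecture rather than a provable theorem.
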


By construction, 
the successive quotients $\Sar_p^{(k)}/\Sar_p^{(k+1)}$
are free $\Z_p$-modules, say
\begin{equation}\label{eq:e_k}
\Sar_p^{(k)}/\Sar_p^{(k+1)}\simeq\Z_p^{e_k},
\end{equation}
and Conjecture~\ref{conj:mazur-BD} predicts in particular that 
\[
e_1=2\min\{r^+,r^-\},\quad e_2=\vert r^+-r^-\vert-1,
\] 
and $e_k=0$ for all $k\geqslant 3$. 


Using derived $p$-adic height pairings, one can define an enhanced $p$-adic regulator extending Definition~\ref{def:enhanced}. Assume that $\Sar_p^{(p)}=US_p(E/K)$ 
(as Conjecture~\ref{conj:mazur-BD} predicts in particular). Let $P_1,\dots, P_r$ be an integral basis for $E(K)\otimes\bQ$, and let $A\in M_{n}(\bZ_p)$ be an endomorphism of $S_p(E/K)$ sending $P_1,\dots,P_r$ to a $\bZ_p$-basis $x_1,\dots, x_r$ for $S_p(E/K)$ compatible with the filtration (\ref{eq:fil}), so for $1\leqslant k\leqslant p-1$ the projection of say $x_{h_k+1},\dots,x_{h_k+e_k}$ to $\Sar_p^{(k)}/\Sar_p^{(k+1)}$ is a $\bZ_p$-basis for $\Sar_p^{(k)}/\Sar_p^{(k+1)}$ and $y:=x_r$ generates $US_p(E/K)$. Set $t=\det(A)\cdot[E(K)\colon\bZ P_1+\cdots+\bZ P_r]$.

\begin{defn}\label{def:der-enhanced}
Let $\varrho:=\sum_{k=1}^{p-1}ke_k$. The \emph{derived enhanced regulator} $\widetilde{\rm Reg}_{\rm der}$ is the element of $E(K)^{\otimes 2}\otimes_{}(J^{\varrho}/J^{\varrho+1})\otimes\bQ$  defined by
\[
\widetilde{\rm Reg}_{\rm der}:=
t^{-2}\cdot(y\otimes y)\otimes
\prod_{k=1}^{p-1}R^{(k)},
\]
where $R^{(k)}=\det(h_p^{(k)}(x_i,x_j))_{h_k+1\leqslant i,j\leqslant h_k+e_k}$. 
\end{defn}

The relation between $\widetilde{\rm Reg}_{\rm der}$ and $\widetilde{\rm Reg}$ is readily described.

\begin{lem}\label{lem:der-extends}
Assume Conjecture~\ref{conj:mazur-BD}. 
If $\vert r^+-r^-\vert=1$, then 
$\widetilde{\rm Reg}_{\rm der}=\widetilde{\rm Reg}$. 
\end{lem}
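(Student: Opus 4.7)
The plan is to verify that, under the hypothesis $|r^+-r^-|=1$, the formulas for $\widetilde{\rm Reg}$ and $\widetilde{\rm Reg}_{\rm der}$ reduce to the same element of $E(K)^{\otimes 2}\otimes(J^{r-1}/J^r)\otimes\bQ$. First, I would unpack the hypotheses on the filtration~(\ref{eq:fil}). Under Conjecture~\ref{conj:mazur-BD} with $|r^+-r^-|=1$, $\Sar_p^{(2)}$ has $\bZ_p$-rank~$1$ and contains $US_p(E/K)$ which itself has rank $1$, so $\Sar_p^{(k)}\otimes\bQ=\bQ\cdot y$ for every $k\geqslant 2$. Consequently the exponents in (\ref{eq:e_k}) are $e_1=r-1$ and $e_k=0$ for $k\geqslant 2$, giving $\varrho=r-1$ and collapsing Definition~\ref{def:der-enhanced} to
\[
\widetilde{\rm Reg}_{\rm der}=t^{-2}\cdot(y\otimes y)\otimes R^{(1)},\qquad R^{(1)}=\det\bigl(h_p^{\tt MT}(x_i,x_j)\bigr)_{1\leqslant i,j\leqslant r-1}.
\]

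Next, I would analyse the adjugate of the full $r\times r$ Mazur--Tate height matrix. Write $M_P=\bigl(h_p^{\tt MT}(P_i,P_j)\bigr)$ and $M_x=\bigl(h_p^{\tt MT}(x_i,x_j)\bigr)=AM_PA^T$, where $A\in{\rm GL}_r(\bQ)$ is the change of basis with $x_i=\sum_kA_{ik}P_k$, so that $t=\det(A)\cdot t'$. The null-space of $h_p^{\tt MT}$ on $S_p(E/K)\otimes\bQ$ is by construction $\Sar_p^{(2)}\otimes\bQ=\bQ\cdot y$, and the $P$-coordinates of $y=x_r$ are recorded by $v_P:=A^Te_r\in\bQ^r$, so $\ker(M_P)=\bQ\cdot v_P$. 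Symmetry of $M_P$ together with its being of rank~$r-1$ forces ${\rm adj}(M_P)=\lambda_P\cdot v_Pv_P^T$ for a unique scalar $\lambda_P\in(J^{r-1}/J^r)\otimes\bQ$; analogously ${\rm adj}(M_x)=R^{(1)}\cdot e_re_r^T$ since the $(r,r)$-cofactor of $M_x$ equals $R^{(1)}$. Substituting into the identity ${\rm adj}(AM_PA^T)=\det(A)^2\,A^{-T}\,{\rm adj}(M_P)\,A^{-1}$ and using $A^{-T}v_P=e_r$ yields $\lambda_P=R^{(1)}/\det(A)^2$.

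Finally, I would expand $\widetilde{\rm Reg}$ using the identification ${\rm adj}(M_P)_{j,i}=(-1)^{i+j}R_{i,j}$:
\[
\widetilde{\rm Reg}=\frac{1}{t'^2}\sum_{i,j=1}^rP_i\otimes P_j\otimes{\rm adj}(M_P)_{j,i}=\frac{\lambda_P}{t'^2}\Bigl(\sum_i(v_P)_iP_i\Bigr)\otimes\Bigl(\sum_j(v_P)_jP_j\Bigr)=\frac{\lambda_P}{t'^2}\,y\otimes y,
\]
where the last equality uses $\sum_i(v_P)_iP_i=\sum_iA_{ri}P_i=x_r=y$. Plugging in $\lambda_P=R^{(1)}/\det(A)^2$ and $t=\det(A)\cdot t'$ then gives $\widetilde{\rm Reg}=t^{-2}R^{(1)}\cdot y\otimes y=\widetilde{\rm Reg}_{\rm der}$, as desired. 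The main thing to watch is the conventions (row vs.\ column vectors, in which basis $A$ is expressed, and the precise sense in which $R^{(1)}$ lives in $(J^{r-1}/J^r)\otimes\bQ$ so that matrix-algebra identities are justified over that module); once these are pinned down the computation is pure bookkeeping with the adjugate of a rank-deficient symmetric matrix.
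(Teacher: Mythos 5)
Your proof is correct and follows essentially the same route as the paper's: both reduce to the facts that $e_k=0$ for $k\geqslant 2$ (so $\prod_k R^{(k)}=R^{(1)}$) and that the adjugate of the rank-$(r-1)$ symmetric height matrix is concentrated on the null vector $y$, yielding $\widetilde{\rm Reg}=t^{-2}(y\otimes y)\otimes R^{(1)}$. The only difference is that you justify explicitly, via the transformation law ${\rm adj}(AM_PA^T)=\det(A)^2A^{-T}\,{\rm adj}(M_P)\,A^{-1}$, the passage from the basis $P_1,\dots,P_r$ with normalization $t'$ to the basis $x_1,\dots,x_r$ with normalization $t$, a step the paper's proof asserts without comment.
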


\begin{proof}
By our running assumption that $\#\Sha(E/K)_{p^\infty}<\infty$, we may view $h_p^{\tt MT}$ as defined on $S_p(E/K)$. Denote by $R_{i,j}'$ the $(i,j)$-minor of the matrix $(h_p^{\tt MT}(x_i,x_j))_{1\leqslant i,j\leqslant r}$. Since universal norms are in the null-space of $h_{p}^{\tt MT}$, we find that
\begin{align*}
\widetilde{\rm Reg}
&=t^{-2}\sum_{1\leqslant i,j\leqslant r}(-1)^{i+j}x_i\otimes x_j\otimes R'_{i,j}\\
&=t^{-2}\cdot(y\otimes y)\otimes R'_{r,r},
\end{align*}
noting that for $(i,j)\neq (r,r)$ the minor $R'_{i,j}$ is the determinant of a matrix having either a row or a column consisting entirely of zeroes. Since 
our assumptions together with $(\ref{eq:e_k})$ imply that 
\[
\Sar_p^{(2)}=\Sar_p^{(3)}=\cdots=US_p(E/K),
\] 
we conclude that  
$\prod_{k=1}^{p-1}R^{(k)}=R^{(1)}=
R_{r,r}'$, 
hence the result.
\end{proof}

In general, Conjecture~\ref{conj:mazur-BD} predicts that $\widetilde{\rm Reg}_{\rm der}$ is a nonzero element in 
$E(K)^{\otimes 2}\otimes(J^{\varrho}/J^{\varrho+1})\otimes\bQ$, where
\[
\varrho\overset{}=e_1+2e_2=2\min\{r^+,r^-\}+2(\vert r^+-r^-\vert-1)=2(\max\{r^+,r^-\}-1),
\]
which as already noted is strictly larger than $r-1$ when $\vert r^+-r^-\vert>1$. Thus, by Lemma~\ref{lem:der-extends} the following refines Conjecture~\ref{conj:BDconj-weakLC}.

\begin{conj}[Bertolini--Darmon]\label{conj:BD-general}
Under the above hypotheses we have
\[
{\rm ord}_J\mathscr{L}=2(\max\{r^+,r^-\}-1),
\]
and letting $\bar{\mathscr{L}}$ be the natural image of $\mathscr{L}$ in $E(K)^{\otimes 2}\otimes(J^{2\rho}/J^{2\rho+1})$, where $\rho=\max\{r^+,r^-\}-1$, we have
\[
\bar{\mathscr{L}}=\biggl(\frac{1-a_p(E)+p}{p}\biggr)^2\cdot\widetilde{\rm Reg}_{\rm der}\cdot\#\Sha(E/K)_{}\cdot\prod_{\ell\vert N^+}c_\ell^2.
\]
\end{conj}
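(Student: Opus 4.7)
The plan is to reduce Conjecture~\ref{conj:BD-general} to the analogous statement for the BDP $p$-adic $L$-function $L_\pp(f)$, which fits in the Iwasawa-theoretic framework where one has access to both the main conjecture and a control/descent theorem. The key bridge is a relation of the form $\mathscr{L} = \theta \otimes \theta^*$ mapping to $L_\pp(f)$ under a Perrin-Riou style logarithm/regulator; more precisely, the regularized Heegner classes built from $\mathbf{z}_n$ satisfy norm compatibility along $K_\infty/K$, so their image under the big logarithm map on $\mathrm{H}^1_{\mathrm{Iw}}(K_{\infty,\pp}, T)$ should yield (up to explicit Euler factors) the element $\mathscr{L}_\pp(f)$, and pairing against its complex conjugate gives $L_\pp(f)$. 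This identification is precisely the kind of explicit reciprocity law established (in some generality) by Bertolini--Darmon--Prasanna and later refined.

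First I would establish the analogue of Theorem~\ref{intro-thm} for the derived situation: the characteristic ideal generator $F_\pp(f)$ lies in $J^{2\rho}$ with $\rho=\max\{r^+,r^-\}-1$, and its image in $J^{2\rho}/J^{2\rho+1}$ equals (up to a $p$-adic unit) a regulator expression involving derived $p$-adic heights on the Selmer filtration $\Sar_p^{(\bullet)}$. This is an Iwasawa-theoretic descent argument: the $\Lambda$-module $\Sel_\pp(K_\infty,E_{p^\infty})^\vee$ has, after base change, a finite filtration induced by the exact control sequence with $\pp$-singular quotients, and one applies the machinery of Bertolini--Darmon \cite{BD-derived-AJM} which exhibits the derived heights $h_p^{(k)}$ as the connecting maps between successive graded pieces of the $J$-adic filtration on $\Sel_\pp(K_\infty,E_{p^\infty})^\vee$. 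The rank of $\Sar_p^{(k)}/\Sar_p^{(k+1)}$ contributes $k\cdot e_k$ to $\mathrm{ord}_J F_\pp(f)$, and the leading term factorizes through $\prod_k R^{(k)}$ times the contribution of the cotorsion quotient $\Sar_p^{(p)} = US_p(E/K) \simeq \Z_p$, giving precisely $\widetilde{\mathrm{Reg}}_{\mathrm{der}}$.

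Next, I would invoke the Iwasawa--Greenberg main conjecture (Conjecture~\ref{conj:IMC}), available under mild hypotheses via \cite{BCK-PRconj}, to transfer the above from $F_\pp(f)$ to $L_\pp(f)$ with equality of ideals. Taking square roots gives the statement for $\mathscr{L}_\pp(f)$. Finally, the explicit reciprocity identification $\mathscr{L}_\pp(f) \leftrightarrow \theta$ lets one read off $\mathrm{ord}_J\theta = \max\{r^+,r^-\}-1$ and the predicted leading term of $\mathscr{L} = \theta \otimes \theta^*$ in $E(K)^{\otimes 2} \otimes (J^{2\rho}/J^{2\rho+1})$. The factor $((1-a_p(E)+p)/p)^2$ naturally emerges from the Euler factor in the definition of $z_0$ and from the interpolation formula \eqref{eq:bdp} at $\mathds{1}$; the Tamagawa product over $\ell\mid N^+$ surfaces through local-at-$\ell$ discrepancies between the Selmer conditions at the relevant bad primes (those split in $K$, since $N^-$ is the product of inert primes and does not contribute after applying the Jacquet--Langlands transfer to $X_{N^+,N^-}$).

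The principal obstacle is sharpness. Standard Iwasawa-theoretic input typically yields only a divisibility in the main conjecture and an inequality in descent (reflecting possible losses at bad primes and at $\pp$), so one would unconditionally obtain just $\mathrm{ord}_J\mathscr{L}\geqslant 2(\max\{r^+,r^-\}-1)$ and, conditional on equality, the leading coefficient formula up to a $p$-adic unit rather than on the nose; the loss of the factor $((1-a_p+p)/p)^2$ and of Tamagawa numbers is already visible in Theorem~\ref{intro-thm} and Remark~\ref{rem:p-adic-unit}. A secondary obstacle is Conjecture~\ref{conj:mazur-BD}: the identification of the filtration \eqref{eq:fil} with the Iwasawa-theoretic filtration requires that $\Sar_p^{(p)} = US_p(E/K)$, which the conjecture predicts but which is not known in general. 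For this reason I would state the final output as an inequality in the rank part plus an ``up to $p$-adic unit'' leading coefficient formula conditional on maximal non-degeneracy, matching the scope flagged in the introduction (see Corollary~\ref{cor:C}).
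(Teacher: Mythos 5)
This statement is a \emph{conjecture} (attributed to Bertolini--Darmon, with the refinement formulated by the authors themselves in Remark~\ref{eq:rem-refined}); the paper does not prove it, and no proof of it is currently known. What you have written is, in effect, a sketch of the paper's strategy for its \emph{partial} results toward the conjecture --- Theorem~\ref{thm:A} (descent/control computing the leading term of $F_\pp(f)$ via admissible sets, Fitting ideals, and the derived-height machinery of Bertolini--Darmon), Corollary~\ref{cor:B} (transfer to $L_\pp(f)$ via the main conjecture of \cite{BCK-PRconj}), and Corollary~\ref{cor:C} (transfer to $\theta$ via the explicit reciprocity law in Proposition~\ref{prop:BD-BSD}) --- and your closing paragraph correctly concedes that this yields only one inequality in the rank part and a leading coefficient up to a $p$-adic unit. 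But presented as a proof of the stated conjecture, the proposal has irreparable gaps: (1) the reverse inequality ${\rm ord}_J\mathscr{L}\leqslant 2(\max\{r^+,r^-\}-1)$ amounts to the non-vanishing of $\widetilde{\rm Reg}_{\rm der}$, i.e.\ to the maximal non-degeneracy Conjecture~\ref{conj:mazur-BD} of Mazur and Bertolini--Darmon, which is open; no Iwasawa-theoretic descent produces it. (2) The exact constants $((1-a_p(E)+p)/p)^2$ and $\prod_{\ell\mid N^+}c_\ell^2$ cannot be recovered by this route: the method sees everything only modulo $p$-adic units, and under the paper's Hypotheses~\ref{hyp:running} these factors \emph{are} units (Remark~\ref{rem:p-adic-unit}), so they are invisible rather than ``naturally emerging.''

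Two further points. First, the conjecture is stated under the generalized Heegner hypothesis (\ref{eq:gen-H}), whereas the construction of $L_\pp(f)$, the main conjecture input, and all of the paper's theorems require the strict hypothesis (\ref{eq:Heeg}) ($N^-=1$); your reduction to the BDP $L$-function therefore does not even apply in the generality in which the statement is posed. Second, the bridge $\mathscr{L}_\pp(f)\leftrightarrow\theta$ of Proposition~\ref{prop:BD-BSD} is a one-way implication ($\mathscr{L}_\pp(f)\in J^\rho\Rightarrow\theta\in Z_p\otimes J^\rho$) obtained from a map $\mathfrak{L}_\pp$ that is injective with \emph{finite cokernel} only after inverting $p$; transferring an exact leading-coefficient identity for $\mathscr{L}=\theta\otimes\theta^*$ across $\Psi_\infty$ and $\mathfrak{L}_\pp$ would require controlling these cokernels and the failure of $\Psi_\infty$ to be an isomorphism, which the paper does not attempt. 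If you restate your conclusion as the inequality plus the up-to-unit formula under (\ref{eq:Heeg}), you recover exactly the paper's Theorem~\ref{thm:A} and Corollaries~\ref{cor:B} and~\ref{cor:C}, and your outline of the descent is then essentially the paper's argument.
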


It is also possible to formulate a leading term formula for the Heegner distribution $\theta$, refining the ``non-exceptional case'' of \cite[Conj.~4.6]{BDmumford-tate}. 


The subspace of universal norms $US_p(E/K)$ is stable under the action of ${\rm Gal}(K/\bQ)$, and therefore is contained in one of the $\tau$-eigenspaces $S_p(E/K)^\pm$. 

\begin{lem}\label{lem:sign}
Assume Conjecture~\ref{conj:mazur-BD}. Letting ${\rm sign}\;US_p(E/K)$ be the sign of the $\tau$-eigenspace where $US_p(E/K)$ is contained, we have 
\[
{\rm sign}\;US_p(E/K)=\left\{
\begin{array}{ll}
1&\textrm{if $r^+>r^-$},\\[0.1cm]
-1&\textrm{if $r^->r^+$.}
\end{array}
\right.
\]
In other words, $US_p(E/K)$ is contained in the larger of the $\tau$-eigenspaces $S_p(E/K)^\pm$.
\end{lem}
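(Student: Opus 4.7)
The plan is to derive the result directly from the rank statement of Conjecture~\ref{conj:mazur-BD} together with the Galois-equivariance of $h_p^{\tt MT}$ recorded in Remark~\ref{rem:isotropic}.

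First I would observe that the relation $h_p^{\tt MT}(\tau x,\tau y)=-h_p^{\tt MT}(x,y)$ implies that the null space $\Sar_p^{(2)}$ of $h_p^{\tt MT}$ is stable under the $\Gal(K/\bQ)$-action, hence decomposes as
\[
\Sar_p^{(2)}=(\Sar_p^{(2)})^+\oplus(\Sar_p^{(2)})^-.
\]
Moreover, since $E(K)^\pm$ are isotropic by Remark~\ref{rem:isotropic}, the pairing $h_p^{\tt MT}$ descends to
\[
\bar h\colon\bigl(S_p(E/K)^+\otimes\bQ_p\bigr)\times\bigl(S_p(E/K)^-\otimes\bQ_p\bigr)\longrightarrow (J/J^2)\otimes\bQ_p,
\]
whose left and right kernels are precisely $(\Sar_p^{(2)})^+\otimes\bQ_p$ and $(\Sar_p^{(2)})^-\otimes\bQ_p$.

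Next I would estimate the size of these kernels by elementary linear algebra: the rank of $\bar h$ is at most $\min\{r^+,r^-\}$, so the $\bQ_p$-dimension of $\ker(\bar h)\cap (S_p(E/K)^\pm\otimes\bQ_p)$ is at least $r^\pm-\min\{r^+,r^-\}$. This quantity equals $|r^+-r^-|$ if $r^\pm>r^\mp$ and $0$ otherwise. Thus the $\pm$-part of $\Sar_p^{(2)}$ has $\bZ_p$-rank at least $|r^+-r^-|$ on the side of the larger eigenspace, and at least $0$ on the smaller side.

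Finally, Conjecture~\ref{conj:mazur-BD} asserts ${\rm rank}_{\bZ_p}\Sar_p^{(2)}=|r^+-r^-|$, which forces both inequalities to be equalities and locates $\Sar_p^{(2)}$ entirely inside $S_p(E/K)^\epsilon$, where $\epsilon\in\{+,-\}$ is the sign of the larger $\tau$-eigenspace. Since $US_p(E/K)=\Sar_p^{(3)}\subset\Sar_p^{(2)}$ under the same conjecture, the universal norms are contained in the same eigenspace, yielding the claim. I do not anticipate a serious obstacle: the only point requiring care is the twisted Galois-equivariance of Remark~\ref{rem:isotropic}, which does double duty in guaranteeing both the $\tau$-stability of $\Sar_p^{(2)}$ and the well-definedness of $\bar h$.
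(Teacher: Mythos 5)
Your proof is correct and follows essentially the same route as the paper's: both arguments reduce to the observation that, by the isotropy of the $\tau$-eigenspaces, the null space of $h_p^{\tt MT}$ is the direct sum of the left and right kernels of the cross pairing $S_p(E/K)^+\times S_p(E/K)^-\rightarrow(J/J^2)\otimes\bQ$, and that the rank prediction of Conjecture~\ref{conj:mazur-BD} forces this pairing to be non-degenerate on the smaller side, leaving the null space (hence the universal norms) inside the larger eigenspace. The paper merely states this more tersely; your explicit linear-algebra count of the kernels is a correct unwinding of the same idea.
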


\begin{proof}
Viewing $h_p^{\tt MT}$ as defined on $S_p(E/K)$, Conjecture~\ref{conj:mazur-BD} predicts that the restriction
\[
h_p^{\tt MT}:S_p(E/K)^+\times S_p(E/K)^-\rightarrow(J/J^2)\otimes\bQ
\]
is either left non-degenerate or right non-degenerate, depending on which of the $\tau$-eigenspaces $S_p(E/K)^\pm\subset S_p(E/K)$ is larger. Since the universal norms are contained in the null-space of $h_p^{\tt MT}$, it follows that $US_p(E/K)$ is contained in the $\tau$-eigenspace of larger rank. 
\end{proof}

\begin{rem}
The conclusion of Lemma~\ref{lem:sign} is predicted by the ``sign conjecture'' of Mazur--Rubin \cite[Conj.~4.8]{MR-kato}, and the fact that it follows from Conjecture~\ref{conj:mazur-BD} 
was already observed by them.
\end{rem}

Let $s:=\min\{r^+,r^-\}$ and recall that Conjecture~\ref{conj:mazur-BD} predicts $e_1:={\rm rank}_{\bZ_p}\Sar_p^{(1)}/\Sar_p^{(2)}=2s$.  
Order the first $2s$ elements of the basis $x_1,\dots,x_r$ for $S_p(E/K)$ so that $x_1=:y_1^+,\dots,x_s=:y_s^+$ belong to  $S_p(E/K)^+$ and $x_{s+1}=:y_1^-,\cdots,x_{2s}=:y_s^-$ belong to $S_p(E/K)^-$.

\begin{lem}\label{lem:square}
We have
\[
R^{(1)}=
-(\det(h_p^{\tt MT}(y_i^+,y_j^-)_{1\leqslant i,j\leqslant s})^2.
\]
\end{lem}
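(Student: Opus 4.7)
The plan is to exploit the isotropy of the $\tau$-eigenspaces recorded in Remark~\ref{rem:isotropic} in order to reduce $R^{(1)}$ to the determinant of a block anti-diagonal matrix, from which the stated formula follows by an elementary linear-algebra computation.

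First, I would note that under the standing hypotheses (in particular $\#\Sha(E/K)_{p^\infty}<\infty$) there is an identification $S_p(E/K)\simeq E(K)\otimes\bZ_p$, so the decomposition $S_p(E/K)=S_p(E/K)^+\oplus S_p(E/K)^-$ is compatible with the $\tau$-eigenspace decomposition of $E(K)\otimes\bZ_p$, and Remark~\ref{rem:isotropic} applies to the summands $S_p(E/K)^\pm$. By definition $e_1=2s$ and $h_1=0$, so $R^{(1)}$ is the determinant of the $2s\times 2s$ matrix
\[
H:=\bigl(h_p^{\tt MT}(x_i,x_j)\bigr)_{1\leqslant i,j\leqslant 2s},
\]
with $x_1,\dots,x_s=y_1^+,\dots,y_s^+\in S_p(E/K)^+$ and $x_{s+1},\dots,x_{2s}=y_1^-,\dots,y_s^-\in S_p(E/K)^-$.

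Second, invoking the isotropy of the $\tau$-eigenspaces from Remark~\ref{rem:isotropic} yields $h_p^{\tt MT}(y_i^+,y_j^+)=0=h_p^{\tt MT}(y_i^-,y_j^-)$ for all $1\leqslant i,j\leqslant s$. Combined with the symmetry of $h_p^{\tt MT}$, this places $H$ in block form
\[
H=\begin{pmatrix} 0 & M \\ M^T & 0 \end{pmatrix},\qquad M:=\bigl(h_p^{\tt MT}(y_i^+,y_j^-)\bigr)_{1\leqslant i,j\leqslant s}.
\]

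Finally, factoring $H$ as the product of the block-swap permutation and a block-diagonal matrix,
\[
H=\begin{pmatrix} 0 & I_s \\ I_s & 0 \end{pmatrix}\begin{pmatrix} M^T & 0 \\ 0 & M \end{pmatrix},
\]
and observing that the block-swap matrix is a product of the $s$ disjoint transpositions $(1,s+1)(2,s+2)\cdots(s,2s)$ and therefore has determinant $(-1)^s$, one reads off $\det H=(-1)^s(\det M)^2$, which yields the claimed formula. The argument is purely linear-algebraic; the only delicate point is careful sign bookkeeping in the block-swap factorisation, which is routine.
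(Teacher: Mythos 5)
Your argument is correct and is exactly the paper's approach (the paper's entire proof is the single sentence that the claim is immediate from the isotropy of $S_p(E/K)^\pm$ under $h_p^{\tt MT}$); you have simply written out the block anti-diagonal reduction and the determinant computation that this one-liner suppresses. One caveat: your (correct) computation gives $\det H=(-1)^s(\det M)^2$, which equals the stated $-(\det M)^2$ only when $s=\min\{r^+,r^-\}$ is odd, so the lemma as literally written is off by a sign when $s$ is even; this is harmless in context because $R^{(1)}$ only feeds into the square-root regulator of Definition~\ref{def:enhanced-der-sqrt}, which is declared well-defined only up to sign, but you should not have asserted that $(-1)^s(\det M)^2$ ``yields the claimed formula'' without flagging the parity dependence.
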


\begin{proof}
This is immediate from the isotropic property of $S_p(E/K)^\pm$ under the pairing $h_p^{\tt MT}$ (see Remark~\ref{rem:isotropic}). 
\end{proof}

Thus $R^{(1)}$ is essentially a square. On the other hand, since for even values of $k$ the pairing $h_p^{(k)}$ is alternating (see part~(1) of \cite[Thm.~2.18]{BD-derived-AJM}), we have
\[
R^{(2)}={\rm pf}(h_p^{(2)}(x_i,x_j)_{e_1+1\leqslant i,j\leqslant e_1+e_2})^2,
\]
where ${\rm pf}(M)$ denotes the Pfaffian of the matrix $M$. 
This motivates the following definition of a square-root of the regulator $\widetilde{\rm Reg}_{\rm der}$ in Definition~\ref{def:der-enhanced}.

\begin{defn}\label{def:enhanced-der-sqrt}
Assume Conjecture~\ref{conj:mazur-BD}. The \emph{square-root derived enhanced regulator} is the element of $E(K)\otimes(J^\rho/J^{\rho+1})\otimes\bQ$, where $\rho=\max\{r^+,r^-\}-1$, defined by  
\[
\widetilde{\rm Reg}_{\rm der}^{1/2}:=t^{-1}\cdot y\otimes
(\det(h_{p}^{\tt MT}(y_i^+,y_j^-)_{1\leqslant i,j\leqslant s})\cdot{\rm pf}(h_p^{(2)}(x_i,x_j)_{e_1+1\leqslant i,j\leqslant e_1+e_2}).
\]
Note that this is only well-defined up to sign. 
\end{defn} 


The following refines \cite[Conj.~4.6]{BDmumford-tate} in the cases where $\vert r^+-r^-\vert>1$, and complements Conjecture~\ref{conj:BDconj-rank} with a leading coefficient formula.

\begin{conj}[Bertolini--Darmon]\label{conj:BD-sqrt}
We have 
\[
{\rm ord}_J\theta=\max\{r^+,r^-\}-1,
\]
and letting $\bar{\theta}$ be the natural image of $\theta$ in $(E(K_\infty)\otimes J^{\rho}/J^{\rho+1})^{\Gamma_\infty}$, where $\rho=\max\{r^+,r^-\}-1$, the following equality holds
\[
\bar{\theta}=\pm\biggl(\frac{1-a_p(E)+p}{p}\biggr)\cdot\widetilde{\rm Reg}^{1/2}_{\rm der}\cdot\sqrt{\#\Sha(E/K)_{}}
\cdot\prod_{\ell\vert N^+}c_\ell.
\]
\end{conj}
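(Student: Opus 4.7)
The plan is to derive Conjecture~\ref{conj:BD-sqrt} as the square root of Conjecture~\ref{conj:BD-general}, exploiting the factorization $\mathscr{L}=\theta\otimes\theta^*$ in $Z_p^{\otimes 2}[[\Gamma_\infty]]$. The rank inequality $\mathrm{ord}_J\theta\geq\max\{r^+,r^-\}-1$ I would attempt by induction along the filtration~(\ref{eq:fil}): a Perrin-Riou--style identity should express the $k$-th derivative of $\theta$, after pairing with any $x\in\Sar_p^{(k)}$, as a sum of derived heights $h_p^{(k)}(\cdot,x)$ applied to pieces of $\bar\theta$, so that the vanishing of $h_p^{(k)}$ outside $\Sar_p^{(k)}$ forces $\theta\bmod J^{k+1}\in\Sar_p^{(k+1)}\otimes(J^k/J^{k+1})$. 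Invoking Conjecture~\ref{conj:mazur-BD}, $\Sar_p^{(\rho+1)}=US_p(E/K)$ is cyclic, so the leading image has the form $\bar\theta=y\otimes\lambda_0$ with $y$ a generator of $US_p(E/K)$ lying in the larger of the $\tau$-eigenspaces (by Lemma~\ref{lem:sign}) and some $\lambda_0\in(J^\rho/J^{\rho+1})\otimes\bQ$.

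Granting the rank part, the leading image $\bar{\mathscr{L}}\in E(K)^{\otimes 2}\otimes(J^{2\rho}/J^{2\rho+1})$ factors as $(y\otimes y)\otimes(\lambda_0\cdot\lambda_0^*)$. Under the involution $\gamma\mapsto\gamma^{-1}$, the induced action on $J^\rho/J^{\rho+1}\cong\mathrm{Sym}^\rho_{\bZ_p}(\Gamma_\infty\otimes\bZ_p)$ is multiplication by $(-1)^\rho$, whence $\lambda_0\cdot\lambda_0^*=(-1)^\rho\lambda_0^{2}$. Comparing with Conjecture~\ref{conj:BD-general} and Definition~\ref{def:der-enhanced}, the task reduces to showing that $\prod_k R^{(k)}$ is, up to an overall sign, a perfect square. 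Lemma~\ref{lem:square} provides $R^{(1)}=-\det(h_p^{\tt MT}(y_i^+,y_j^-))^{2}$; for $R^{(2)}$, the pairing $h_p^{(2)}$ is alternating on $\Sar_p^{(2)}/\Sar_p^{(3)}$, whose dimension $e_2=|r^+-r^-|-1$ is even (since $r^++r^-$ is odd by $p$-parity), so $R^{(2)}=\mathrm{pf}(h_p^{(2)}(x_i,x_j))^{2}$; all higher $R^{(k)}$ vanish from the filtration by Conjecture~\ref{conj:mazur-BD}. Thus $\prod_k R^{(k)}=\pm\bigl(\det(h_p^{\tt MT}(y_i^+,y_j^-))\cdot\mathrm{pf}(h_p^{(2)}(x_i,x_j))\bigr)^{2}$, and extracting a square root matches Definition~\ref{def:enhanced-der-sqrt}, with $\sqrt{\#\Sha(E/K)}\in\bZ$ by the Cassels--Tate pairing and the residual sign absorbed into the $\pm$ already in the statement.

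The principal obstacle is the rank part: its proof in this generality rests on both the existence of the derived filtration down to the cyclic piece and the non-degeneracy of successive derived pairings, i.e., essentially Conjecture~\ref{conj:mazur-BD} (together with Conjecture~\ref{conj:BDconj-rank}). At present only $\theta\neq 0$ is known unconditionally, by \cite{CV-dur}. A more tractable alternative, parallel to the proof of Theorem~\ref{intro-thm}, would bypass Conjecture~\ref{conj:BD-general} altogether: use the explicit reciprocity law of \cite{cas-hsieh1} to relate the image of $\theta$ under a Perrin-Riou--Coleman map to $\mathscr{L}_\pp(f)$, invoke the Iwasawa--Greenberg main conjecture (Conjecture~\ref{conj:IMC}), and transfer via the anticyclotomic control machinery of \cite{JSW} the conclusions of Theorem~\ref{intro-thm} on $F_\pp(f)$ to $\theta$. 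This would yield Conjecture~\ref{conj:BD-sqrt} up to a $p$-adic unit under the hypotheses of Theorem~\ref{intro-thm}, and is the route I would pursue in practice.
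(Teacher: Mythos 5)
The statement you have been asked to prove is Conjecture~\ref{conj:BD-sqrt}: it is an \emph{open conjecture} (a refinement of \cite[Conj.~4.6]{BDmumford-tate}) which the paper formulates but does not prove, so there is no proof in the paper to compare against. Your first two paragraphs do not constitute a proof either: they derive Conjecture~\ref{conj:BD-sqrt} from Conjecture~\ref{conj:BD-general} together with Conjecture~\ref{conj:mazur-BD}, both of which are themselves open. That derivation is essentially the motivational discussion already in the paper (Lemma~\ref{lem:square}, the Pfaffian identity for $R^{(2)}$, and Definition~\ref{def:enhanced-der-sqrt} are introduced precisely so that $\widetilde{\rm Reg}_{\rm der}^{1/2}$ is a square root of $\widetilde{\rm Reg}_{\rm der}$ up to sign), and it has an additional unaddressed point: even granting the rank part, you need the leading image $\bar\theta$ to lie on the line $US_p(E/K)\otimes(J^\rho/J^{\rho+1})\otimes\bQ$ spanned by $y$, not merely in $E(K)\otimes(J^\rho/J^{\rho+1})$ as Lemma~\ref{lem:incl} / \cite[Lem.~2.14]{BDmumford-tate} give; the factorization $\bar{\mathscr{L}}=(y\otimes y)\otimes\lambda_0\lambda_0^*$ is an input you assume rather than establish. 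The sketched ``Perrin-Riou--style identity'' for the rank part is not an argument that exists in the paper or, to my knowledge, in the literature.

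Your proposed ``more tractable alternative'' is indeed the route the paper takes, but it delivers much less than you claim. The explicit reciprocity law $\mathfrak{L}_\pp(\mathbf{z}_\infty)=-\mathscr{L}_\pp(f)\cdot\sigma_{-1,\pp}$, the main conjecture, and Theorem~\ref{thm:A} combine (via Proposition~\ref{prop:BD-BSD}) to give only the \emph{inclusion} $\theta\in Z_p\otimes J^{\rho}$, i.e.\ the inequality ${\rm ord}_J\theta\geqslant\max\{r^+,r^-\}-1$; this is exactly Corollary~\ref{cor:C}, and the paper is explicit that it yields only \emph{one} of the two inequalities in the rank part. Transferring the leading-coefficient formula from $L_\pp(f)$ back to $\theta$ would require the reverse inequality (equivalently, nonvanishing of the relevant derived regulators, i.e.\ Conjecture~\ref{conj:mazur-BD}) together with control of the maps $\Psi_\infty$ and $\mathfrak{L}_\pp$ modulo $J^{\rho+1}$, none of which is available. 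So the claim that this route ``would yield Conjecture~\ref{conj:BD-sqrt} up to a $p$-adic unit under the hypotheses of Theorem~\ref{thm:A}'' is an overstatement; what it yields is Corollary~\ref{cor:C} only.
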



\subsection{Conjectures for $L_\pp(f)$ and $\mathscr{L}_\pp(f)$}\label{subsec:BSDconj}

We keep the hypotheses on the triple $(E,p,K)$ from $\S\ref{subsec:conj-BD}$ (in particular, we assume $\#\Sha(E/K)_{p^\infty}<\infty$), and assume in addition that hypothesis (\ref{eq:Heeg}) from the Introduction (rather than the more general (\ref{eq:gen-H})) holds.


\begin{rem}
The assumption that $p=\pp\ppbar$ splits in $K$ will be essential in what follows, so that the $p$-adic $L$-function $L_\pp(f)$ can be constructed as an element in $\Lambda_{\hat{\cO}}$ (\emph{cf.} \cite{kriz-PhD, AI-non-split} in the case when $p$ is non-split in $K$). On the other hand, it should not be difficult to extend the construction of $L_\pp(f)$ in \cite{cas-hsieh1} under the generalized Heegner hypothesis (\ref{eq:gen-H}) considered in $\S\ref{subsec:conj-BD}$.
\end{rem}

By Lemma~\ref{lem:relation}, the Selmer groups $\Sel_\pp(K,T)$ and $\Sel_{\bar{\pp}}(K,T)$ are both contained in $S_p(E/K)$ and they agree with the kernel ${\rm Sel}_{\rm str}(K,T)$ of the restriction map $(\ref{eq:res-p})$. 
Thus we can consider the pairing
\[
h_\pp:\Sel_{\pp}(K,T)\times\Sel_{\ppbar}(K,T)\rightarrow(J/J^2)\otimes\bQ
\]
obtained by restricting $h_p^{\tt MT}$. The filtration in $(\ref{eq:fil})$ induces a filtration
\begin{equation}\label{eq:filp}
\Sel_\pp(K,T)=\fSar_\pp^{(1)}\supset\fSar_\pp^{(2)}\supset\cdots\supset\fSar_\pp^{(p)}
\end{equation}
defined by $\fSar_\pp^{(k)}:=\Sar_p^{(k)}\cap\Sel_\pp(K,T)$, with the filtered pieces equipped with corresponding derived $p$-adic height pairing
\[
h_\pp^{(k)}:\fSar_\pp^{(k)}\times\fSar_\pp^{(k)}\rightarrow(J^k/J^{k+1})\otimes\bQ
\]
obtained from $h_p^{(k)}$ by restriction.

Assume that $\Sar_p^{(p)}=US_p(E/K)$ 
and that $\Sel_\pp(K_\infty,E_{p^\infty})$ is $\Lambda$-cotorsion. Then $\varprojlim_n\Sel_\pp(K_n,T)$ vanishes (see e.g. \cite[Lem.~A.3]{cas-BF}), and therefore the subspace of universal norms $U\Sel_\pp(K,T)\subset\Sel_\pp(K,T)$ is trivial. 
It follows that 
\[
US_p(E/K)\cap{\rm Sel}_{\rm str}(K,T)=\{0\},
\] 
and so $\log_{\omega_E}(y)\neq 0$ for any generator $y\in US_p(E/K)$. Thus the first $r-1$ elements in the basis $x_1,\dots,x_r$ for $S_p(E/K)$ chosen for the definition of $\widetilde{\rm Reg}_{\rm der}$ yield a basis for $\Sel_\pp(K,T)$ adapted to the filtration $(\ref{eq:filp})$, with the image of $x_{h_k+1},\dots,x_{h_k+e_k}$ in 
\begin{equation}\label{eq:e_k-p}
\Sar_p^{(k)}/\Sar_p^{(k+1)}\simeq\fSar_\pp^{(k)}/\fSar_\pp^{(k+1)}\simeq\bZ_p^{e_k}
\end{equation} 
giving a basis for $\fSar_\pp^{(k)}/\fSar_\pp^{(k+1)}$. Then the partial regulators  of Definition~\ref{def:enhanced} can be rewritten as
\begin{equation}\label{eq:partialp}
R^{(k)}=\det(h_\pp^{(k)}(x_i,x_j))_{h_k+1\leqslant i,j\leqslant h_k+e_k}
={\rm disc}(h_\pp^{(k)}\vert_{\fSar_\pp^{(k)}/\fSar_\pp^{(k+1)}}),
\end{equation}
which we shall denote by $\mathcal{R}_\pp^{(k)}$ in the following.

We can now define the $p$-adic regulator appearing in the leading term formula of our $p$-adic Birch and Swinnerton-Dyer conjecture for $L_\pp(f)$. The map $\log_{\omega_E}$ gives rise to a  map
\[
{\rm Log}_\pp:(E(K)\otimes\bZ_p)^{\otimes 2}\rightarrow (E(K_\pp)\otimes \bZ_p)^{\otimes 2}\xrightarrow{\log_{\omega_E}^{\otimes 2}}\bZ_p\otimes\bZ_p\rightarrow\bZ_p,
\]
where the last arrow is given by multiplication. Choose a basis $x_1,\dots,x_{r-1},x_r$ as before, with $x_r=y_\pp$ given by a generator for $US_p(E/K)$ with $p^{-1}\log_{\omega_E}(y_\pp)\not\equiv 0\pmod{p}$.

\begin{defn}\label{def:regp}
The \emph{derived regulator} ${\rm Reg}_{\pp,{\rm der}}$ is defined by
\begin{align*}
{\rm Reg}_{\pp,{\rm der}}:={\rm Log}_\pp(\widetilde{\rm Reg}_{\rm der})
&=t^{-2}\cdot\log_{\omega_E}(y_\pp)^2\cdot\prod_{k=1}^{p-1}\mathcal{R}_\pp^{(k)}.
\end{align*}
\end{defn}

Note that ${\rm Reg}_{\pp,{\rm der}}$ is an element in $(J^{\varrho}/J^{\varrho+1})\otimes\bQ$, where $\varrho=\sum_{k=1}^{p-1}ke_k$, and Conjecture~\ref{conj:mazur-BD} predicts the equality $\varrho=2(\max\{r^+,r^-\}-1)$.



\begin{conj}\label{conj:BSD}
We have
\[
{\rm ord}_JL_\pp(f)=2(\max\{r^+,r^-\}-1),
\]
and letting $\bar{L}_\pp(f)$ be the natural image of $L_\pp(f)$ in $J^{2\rho}/J^{2\rho+1}$, where $\rho=\max\{r^+,r^-\}-1$, the following equality holds
\[
\bar{L}_\pp(f)=\biggl(\frac{1-a_p(E)+p}{p}\biggr)^2\cdot{\rm Reg}_{\pp,{\rm der}}\cdot\#\Sha(E/K)\cdot\prod_{\ell\vert N}c_\ell^2.
\]
\end{conj}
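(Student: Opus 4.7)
The plan is to deduce Conjecture~\ref{conj:BSD} from the analogous statement for the characteristic ideal generator $F_\pp(f)$ by invoking the Iwasawa--Greenberg main conjecture (Conjecture~\ref{conj:IMC}) as a genuine equality: granting $(L_\pp(f)) = (F_\pp(f))\Lambda_{\hat{\cO}}$, the two elements differ by a unit in $\Lambda_{\hat{\cO}}^\times$ and hence share the same order of vanishing at $J$ and the same image in $J^{2\rho}/J^{2\rho+1}$ up to that unit. So the core task is to establish the corresponding statement for $F_\pp(f)$, namely that ${\rm ord}_J F_\pp(f) = 2(\max\{r^+,r^-\}-1)$, together with a leading-coefficient formula that matches ${\rm Reg}_{\pp,{\rm der}}$; this is essentially the content of Theorem~\ref{intro-thm}, enhanced from ``up to a $p$-adic unit'' to an exact equality.

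First I would set up a control-theorem mechanism for $\Sel_\pp(K_\infty,E_{p^\infty})^\vee$: the finiteness of $\Sha(E/K)_{p^\infty}$, combined with Lemma~\ref{lem:relation}, allows one to relate $F_\pp(f) \bmod J^n$ to the finite-level Selmer groups $\Sel_\pp(K_n,T)$. Since ${\rm Sel}_\pp(K,T) = {\rm Sel}_{\rm str}(K,T)$ has $\bZ_p$-rank $r-1$, this already forces $F_\pp(f) \in J^{r-1}$. The additional drop by $\vert r^+ - r^- \vert - 1$, pushing the vanishing order up to $2\rho = 2(\max\{r^+,r^-\}-1)$, must be extracted from the systematic degeneracies of $h_p^{\tt MT}$ on ${\rm Sel}_\pp(K,T)$ noted in Remark~\ref{rem:isotropic}: the isotropy of the $\tau$-eigenspaces forces the Mazur--Tate pairing to vanish on a subspace of dimension at least $\vert r^+ - r^-\vert - 1$ beyond the naive expectation, and each such degeneracy contributes an extra order of vanishing in $F_\pp(f)$.

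To extract the leading coefficient itself, I would invoke the Bertolini--Darmon derived $p$-adic height pairings together with the filtration $(\ref{eq:filp})$. The principle is iterative: at each level $k$ the failure of $h_\pp^{(k-1)}$ to be non-degenerate on $\fSar_\pp^{(k-1)}$ is captured precisely by the next derived pairing $h_\pp^{(k)}$ on $\fSar_\pp^{(k)}$. Choosing a basis $x_1,\ldots,x_{r-1}$ of ${\rm Sel}_\pp(K,T)$ adapted to $(\ref{eq:filp})$ together with a generator $y_\pp$ of $US_p(E/K)$, one unwinds the leading term of $F_\pp(f)$ as a product $\prod_k \mathcal{R}_\pp^{(k)}$ multiplied by $\log_{\omega_E}(y_\pp)^2$ coming from the universal-norm direction, which is exactly ${\rm Reg}_{\pp,{\rm der}}$. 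This is the arbitrary-rank generalization of the anticyclotomic control theorem of \cite[\S{3.3}]{JSW} and the non-CM analogue of Rubin's results towards his variant of the $p$-adic BSD conjecture in the CM setting.

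The hard part will be pinning down the arithmetic constants in the leading term with their full precision. The Iwasawa-theoretic control argument naturally produces $\#\Sha(E/K)_{p^\infty}$ and yields the formula only up to a $p$-adic unit (as reflected in Theorem~\ref{intro-thm}), whereas Conjecture~\ref{conj:BSD} demands the exact integer $\#\Sha(E/K)$ and the exact product $\prod_{\ell\mid N} c_\ell^2$. Closing this gap requires the Iwasawa--Greenberg main conjecture as a genuine equality combined with a careful tracking of the local contributions at primes $\ell\mid N$, for instance by bootstrapping against the $p$-part of the classical Birch--Swinnerton-Dyer formula for $E/K$. Handling the Tamagawa factors at primes where $E_p$ may be unramified, and reconciling the $p$-part of $\Sha$ with its full order, is the principal remaining obstacle.
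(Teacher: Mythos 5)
The statement you are trying to prove is a \emph{conjecture} in this paper, and the paper does not claim to prove it; its actual results (Theorem~\ref{thm:A} and Corollary~\ref{cor:B}) establish only the inequality ${\rm ord}_JL_\pp(f)\geqslant 2(\max\{r^+,r^-\}-1)$ and the leading-coefficient formula \emph{up to a $p$-adic unit}, under Hypotheses~\ref{hyp:running} and the hypothesis $(\star)$ needed for the known cases of the main conjecture. Your strategy --- reduce to $F_\pp(f)$ via Conjecture~\ref{conj:IMC}, run a control-theorem argument over $K_n$, and use the isotropy of the $\tau$-eigenspaces under $h_p^{\tt MT}$ together with the derived pairings to push the vanishing order from $r-1$ up to $2\rho$ --- is exactly the paper's route to those partial results. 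But it does not prove the conjecture, for two concrete reasons.

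First, the isotropy/filtration argument only yields the \emph{lower} bound ${\rm ord}_JF_\pp(f)\geqslant\sigma_p\geqslant 2\rho$ (this is how the paper proves Proposition~\ref{prop:2.23}). To get the asserted \emph{equality} you must show that the image of $F_\pp(f)$ in $J^{2\rho}/J^{2\rho+1}$ is nonzero, which by the leading-term formula is equivalent to the non-vanishing of ${\rm Reg}_{\pp,{\rm der}}$, i.e.\ to the ``maximal non-degeneracy'' Conjecture~\ref{conj:mazur-BD} of Mazur and Bertolini--Darmon. That conjecture is open, and nothing in your sketch addresses it; the phrase ``each such degeneracy contributes an extra order of vanishing'' gives divisibility by $J$, not non-divisibility. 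Second, the exact constants are out of reach of this method: the main conjecture is an equality of \emph{ideals}, so $L_\pp(f)$ and $F_\pp(f)$ agree only up to an element of $\Lambda_{\hat\cO}^\times$, and Fitting-ideal/characteristic-ideal computations intrinsically lose that unit. Under Hypotheses~\ref{hyp:running} the factors $1-a_p(E)+p$, the $c_\ell$, and the prime-to-$p$ part of $\#\Sha(E/K)$ are all $p$-adic units (Remark~\ref{rem:p-adic-unit}), so they are invisible to the argument rather than recoverable from it; your proposed ``bootstrapping against the $p$-part of the classical BSD formula'' is not carried out and would in any case presuppose results at least as strong as what you are trying to prove. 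The honest conclusion of your argument is Corollary~\ref{cor:B}, not Conjecture~\ref{conj:BSD}.
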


Similarly as in $\S\ref{subsec:conj-BD}$, 
we can also formulate a version of Conjecture~\ref{conj:BSD} for the  ``square-root'' $p$-adic $L$-function $\mathscr{L}_\pp(f)$. Assume Conjecture~\ref{conj:mazur-BD}, so following Definition~\ref{def:enhanced-der-sqrt} we can define the derived square-root regulator ${\rm Reg}_{\pp,{\rm der}}^{1/2}$ by
\[
{\rm Reg}_{\pp,{\rm der}}^{1/2}:=t^{-1}\cdot\log_{\omega_E}(y_\pp)\cdot(\det(h_\pp(y_i^+,y_j^-)_{1\leqslant i,j\leqslant s})\cdot{\rm pf}(h_p^{(2)}(x_i,x_j)_{e_1+1\leqslant i,j\leqslant e_1+e_2}).
\]
As before, note that ${\rm Reg}_{\pp,{\rm der}}^{1/2}$ is only well-defined up to sign, and is contained in $(J^{\rho}/J^{\rho+1})\otimes\bQ$, where $\rho=\max\{r^+,r^-\}-1$.

\begin{conj}\label{conj:BSD-sqrt}
We have
\[
{\rm ord}_J\mathscr{L}_\pp(f)=\max\{r^+,r^-\}-1,
\]
and letting $\bar{\mathscr{L}}_\pp(f)$ be the natural image of $\mathscr{L}_\pp(f)$ in $J^{\rho}/J^{\rho+1}$, where $\rho=\max\{r^+,r^-\}-1$, the following equality holds
\[
\bar{\mathscr{L}}_\pp(f)=\pm\biggl(\frac{1-a_p(E)+p}{p}\biggr)\cdot{\rm Reg}^{1/2}_{\pp,{\rm der}}\cdot\sqrt{\#\Sha(E/K)}\cdot\prod_{\ell\vert N}c_\ell.
\]
\end{conj}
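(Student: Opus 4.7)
The plan is to deduce Conjecture~\ref{conj:BSD-sqrt} from Conjecture~\ref{conj:BSD} using the defining relation $L_\pp(f)=\mathscr{L}_\pp(f)^2$ in the domain $\Lambda_{\hat{\cO}}$, and to fix the sign $\pm$ by an independent comparison with the Heegner distribution $\theta$ through the Iwasawa-theoretic Bertolini--Darmon--Prasanna explicit reciprocity law.

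First, since $\Lambda_{\hat{\cO}}\cong\hat{\cO}[[T]]$ is a domain, the identity $L_\pp(f)=\mathscr{L}_\pp(f)^2$ forces $\ord_J L_\pp(f)=2\,\ord_J\mathscr{L}_\pp(f)$, and the principal symbol of $L_\pp(f)$ in $\bigoplus_n J^n/J^{n+1}$ equals the square of that of $\mathscr{L}_\pp(f)$. Granting Conjecture~\ref{conj:BSD}, this immediately yields $\ord_J\mathscr{L}_\pp(f)=\rho=\max\{r^+,r^-\}-1$. It then remains to verify that squaring the conjectured leading coefficient in Conjecture~\ref{conj:BSD-sqrt} reproduces, up to an overall sign, that of Conjecture~\ref{conj:BSD}. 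The relevant identities are: the analogue of Lemma~\ref{lem:square} for the restricted pairing $h_\pp$, giving $\mathcal{R}_\pp^{(1)}=-(\det h_\pp(y_i^+,y_j^-))^2$; the alternating property of even derived heights recorded in \cite[Thm.~2.18]{BD-derived-AJM}, giving $\mathcal{R}_\pp^{(2)}={\rm pf}(h_\pp^{(2)})^2$; and, under Conjecture~\ref{conj:mazur-BD}, the collapse of the filtration yielding $\mathcal{R}_\pp^{(k)}=1$ for $k\geqslant 3$. Combined with the fact that $\#\Sha(E/K)$ is a perfect square under our running hypotheses (via the Cassels--Tate pairing, the root number being $-1$), and that the factors $\log_{\omega_E}(y_\pp)^2$, $t^{-2}$ and $((1-a_p(E)+p)/p)^2$ are manifestly squares, these reduce the matching of leading coefficients to a bookkeeping exercise that produces the desired formula up to sign.

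To pin down this sign intrinsically, I would invoke the Iwasawa-theoretic Bertolini--Darmon--Prasanna reciprocity law, which identifies $\mathscr{L}_\pp(f)$, up to an explicit factor, with the image of $\theta$ under a Perrin-Riou-type logarithm map. Substituting this identification into Conjecture~\ref{conj:BD-sqrt} for $\theta$ gives an independent, sign-sensitive derivation of both the order of vanishing and the leading term of $\mathscr{L}_\pp(f)$: the factor $(1-a_p(E)+p)/p$, the derived square-root regulator ${\rm Reg}_{\pp,{\rm der}}^{1/2}$, and the arithmetic invariants $\sqrt{\#\Sha(E/K)}\cdot\prod_{\ell\mid N}c_\ell$ arise from the corresponding objects on the $\theta$-side after applying $\log_{\omega_E}$. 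Consistency with the squaring argument then determines the sign.

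The main obstacle is that the three inputs---Conjecture~\ref{conj:BSD}, Conjecture~\ref{conj:BD-sqrt}, and the maximal non-degeneracy Conjecture~\ref{conj:mazur-BD}---are themselves open. Theorem~\ref{intro-thm} only produces an inequality $\ord_J F_\pp(f)\geqslant r-1$ with a leading-term formula up to a $p$-adic unit, which via the Iwasawa--Greenberg main conjecture transfers to an inequality for $L_\pp(f)$; by the square-root reduction, this gives at best $\ord_J\mathscr{L}_\pp(f)\geqslant (r-1)/2$ with an analogous up-to-unit-and-sign leading-term formula. Establishing the sharp order of vanishing $\rho$, which strictly exceeds $(r-1)/2$ precisely when $\vert r^+-r^-\vert>1$, is essentially equivalent to the maximal non-degeneracy of the anticyclotomic $p$-adic height pairings, for which no general technique is presently available.
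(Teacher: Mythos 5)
The statement you are addressing is Conjecture~\ref{conj:BSD-sqrt}, and the paper offers no proof of it: it is formulated as an open conjecture, motivated by exactly the two consistency checks you describe (squaring against Conjecture~\ref{conj:BSD} via $L_\pp(f)=\mathscr{L}_\pp(f)^2$ in the domain $\Lambda_{\hat{\cO}}$, and transport of Bertolini--Darmon's Conjecture~\ref{conj:BD-sqrt} through the reciprocity law $\mathfrak{L}_\pp(\mathbf{z}_\infty)=-\mathscr{L}_\pp(f)\cdot\sigma_{-1,\pp}$ used in Proposition~\ref{prop:BD-BSD}). You correctly acknowledge that any derivation along these lines is conditional on Conjectures~\ref{conj:BSD}, \ref{conj:mazur-BD} and \ref{conj:BD-sqrt}, all open; so what you have written is a plausibility argument matching the paper's own motivation, not a proof. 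One step in your bookkeeping does not actually close up: by (the analogue of) Lemma~\ref{lem:square} one has $\mathcal{R}_\pp^{(1)}=-(\det(h_\pp(y_i^+,y_j^-))_{1\leqslant i,j\leqslant s})^2$, so, granting the collapse $\mathcal{R}_\pp^{(k)}=1$ for $k\geqslant 3$, squaring gives $({\rm Reg}_{\pp,{\rm der}}^{1/2})^2=-{\rm Reg}_{\pp,{\rm der}}$; the overall $\pm$ in Conjecture~\ref{conj:BSD-sqrt} cannot absorb a sign that survives squaring, so the two conjectured leading terms differ by $-1$, a discrepancy of conventions that your argument passes over silently.

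Your closing assessment of what is provable also undersells the paper. Theorem~\ref{thm:A} (not the weak form stated in the Introduction) gives $\ord_J F_\pp(f)\geqslant 2(\max\{r^+,r^-\}-1)=2\rho$, and Corollary~\ref{cor:B} transfers this to $\ord_J L_\pp(f)\geqslant 2\rho$; since $\Lambda_{\hat{\cO}}$ is a domain this yields $\ord_J\mathscr{L}_\pp(f)\geqslant\rho$ unconditionally under Hypotheses~\ref{hyp:running}, finiteness of $\Sha(E/K)_{p^\infty}$ and (\ref{eq:star}) --- i.e., exactly one of the two inequalities in the rank part of Conjecture~\ref{conj:BSD-sqrt}, not merely $\geqslant(r-1)/2$ as you state. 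The genuinely open content is the reverse inequality, equivalently the non-vanishing of the conjectured leading term, which as you note rests on the maximal non-degeneracy of the derived anticyclotomic height pairings, for which no technique is currently available.
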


\subsection{A relation between the conjectures}

In this section we explain a relation between Bertolini--Darmon's  Conjecture~\ref{conj:BDconj-rank} (i.e., the ``rank part'' of Bertolini--Darmon's Conjecture~\ref{conj:BD-sqrt}) and the ``rank part'' of our Conjecture~\ref{conj:BSD-sqrt}. 

Recall that $Z_p:=E(K_\infty)\otimes\bZ_p$, and for each $n$ define the map $\Psi_n:E(K_n)\otimes\bZ_p\rightarrow Z_p[\Gamma_n]$ by
\[ 
\Psi_n(P_n)=\sum_{\sigma\in\Gamma_n} P_n^\sigma\otimes\sigma^{-1}.
\]
Letting $\pi_{n+1,n}:Z_p[\Gamma_{n+1}]\rightarrow Z_p[\Gamma_n]$ be the map induced by the projection $\Gamma_{n+1}\rightarrow\Gamma_n$,   we 
see that for all $P_{n+1}\in E(K_{n+1})\otimes\bZ_p$ we have
\[
\pi_{n+1,n}(\Psi_{n+1}(P_{n+1}))=
\sum_{\tau\in\Gamma_n}\biggl(\sum_{\substack{\sigma\in\Gamma_{n+1}\\ \sigma\vert_{K_n}=\tau}}P_{n+1}^\sigma\biggr)\otimes\tau^{-1}=
\Psi_n({\rm Norm}_{K_{n+1}/K_n}(P_{n+1})).
\]
It is also readily checked that $\Psi_n$ is $\Gamma_n$-equivariant. Thus setting 
\[
\mathcal{U}(K_\infty/K):=\varprojlim E(K_n)\otimes\bZ_p,
\]
where the limit is with respect to the norm maps ${\rm Norm}_{K_{n+1}/K_n}:E(K_{n+1})\otimes\bZ_p\rightarrow E(K_n)\otimes\bZ_p$, 
we obtain a $\Lambda$-linear map
\[
\Psi_\infty:\mathcal{U}(E/K_\infty)\rightarrow Z_p[[\Gamma_\infty]].
\]
The regularized Heegner points $\mathbf{z}_n$ in $(\ref{def:regularised})$ define an element $\mathbf{z}_\infty\in\mathcal{U}(E/K_\infty)$, and by definition 
the Heegner distribution $\theta=\theta_\infty$ in $(\ref{def:Heeg-dist})$ is given by
\begin{equation}\label{eq:Heeg-dist-Psi}
\theta_\infty=\Psi_\infty(\mathbf{z}_\infty).
\end{equation}

By  a slight abuse of notation, in the next proposition we let 
$J$ denote both the augmentation ideal of $\Lambda$ and of $\Lambda_{\hat\cO}$.

\begin{prop}\label{prop:BD-BSD}
Assume that 
\begin{enumerate}
\item $p=\pp\ppbar$ splits in $K$.
\item $E_p$ is irreducible as a $G_K$-module.
\item $\Sha(E/K_n)_{p^\infty}$ is finite  for all $n$.
\item $E_p$ is ramified for every prime $\ell\vert N$.
\item $p\nmid N\varphi(ND_K)$.
\end{enumerate}
Then we have the implication
\[
\mathscr{L}_\pp(f)\in J_{}^\rho\quad\Longrightarrow\quad\theta_\infty\in Z_p\otimes_{\bZ_p} J_{}^\rho.
\] 
\end{prop}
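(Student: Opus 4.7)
The plan is to combine the Bertolini--Darmon--Prasanna explicit reciprocity law with Coleman theory at $\pp$ to identify $\mathscr{L}_\pp(f)$ as the image of $\theta_\infty$ under a $\Lambda$-linear map whose leading graded pieces are injective on the Galois-fixed subspace where the Heegner distribution accumulates, and then run an induction on the $J$-adic order. By \cite{bdp1}, as sharpened in \cite{braIMRN, cas-hsieh1}, the localized Heegner Iwasawa class $\mathrm{loc}_\pp(\mathbf{z}_\infty)\in H^1_{\mathrm{Iw}}(K_\pp,T)$ is sent to $u\cdot\mathscr{L}_\pp(f)$ for some unit $u\in\Lambda_{\hat\cO}^\times$ by the Perrin--Riou big logarithm $\mathcal{L}_\pp$ attached to the ordinary local representation at $\pp$. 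Coleman theory for the formal group of $E$ at $\pp$ then provides a $\Lambda$-linear map $\mathfrak{L}\colon Z_p\hat\otimes_{\Z_p}\Lambda\to\Lambda_{\hat\cO}$ fitting in the commutative diagram
\[
\begin{CD}
\mathcal{U}(E/K_\infty) @>{\Psi_\infty}>> Z_p\hat\otimes_{\Z_p}\Lambda \\
@V{\mathrm{loc}_\pp}VV @VV{\mathfrak{L}}V \\
H^1_{\mathrm{Iw}}(K_\pp,T) @>{\mathcal{L}_\pp}>> \Lambda_{\hat\cO}
\end{CD}
\]
so that $\mathfrak{L}(\theta_\infty)=u\cdot\mathscr{L}_\pp(f)$, and by $\Lambda$-linearity $\mathfrak{L}$ carries $Z_p\otimes J^k$ into $J^k\Lambda_{\hat\cO}$ for every $k\geqslant 0$.

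Granted the above, suppose $\mathscr{L}_\pp(f)\in J^\rho$, so that $\mathfrak{L}(\theta_\infty)\in J^\rho\Lambda_{\hat\cO}$. I argue by induction on $v\leqslant\rho$ that $\theta_\infty\in Z_p\otimes J^v$; the base case $v=0$ is trivial. For the inductive step, assume $\theta_\infty\in Z_p\otimes J^v$ with $v<\rho$ and consider the leading image $\bar c_v\in Z_p\otimes(J^v/J^{v+1})$. Replaying the argument of Lemma~\ref{lem:incl} at this level shows that $\bar c_v$ is fixed by the diagonal $\Gamma_\infty$-action; since $\Gamma_\infty$ acts trivially on $J^v/J^{v+1}$, one obtains $\bar c_v\in Z_p^{\Gamma_\infty}\otimes(J^v/J^{v+1})$. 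Hypothesis~(5) ensures that $Z_p^{\Gamma_\infty}=E(K)\otimes\Z_p$ (no extra $p$-torsion in $E(K_\infty)^{\Gamma_\infty}$), and the norm-compatibility of $\mathbf{z}_\infty\in\mathcal{U}(E/K_\infty)$ combined with the $\Lambda$-linearity of $\Psi_\infty$ further pins $\bar c_v$ inside the one-dimensional subspace $US_p(E/K)\otimes(J^v/J^{v+1})$, where $US_p(E/K)\simeq\Z_p$ is the space of universal norms (of rank one by Cornut--Vatsal under our hypotheses).

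The induced graded piece of $\mathfrak{L}$ on this line is $\log_{\omega_E}\circ\mathrm{loc}_\pp$ tensored with the identity of $J^v/J^{v+1}$, which is injective because $\log_{\omega_E}(y_\pp)\neq 0$ (cf.\ the discussion preceding Definition~\ref{def:regp}). Since $\mathfrak{L}(\theta_\infty)\in J^\rho\subset J^{v+1}$ forces $\mathfrak{L}(\bar c_v)=0$, injectivity yields $\bar c_v=0$, i.e., $\theta_\infty\in Z_p\otimes J^{v+1}$; iterating closes the induction and gives $\theta_\infty\in Z_p\otimes J^\rho$, as required.

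The main technical obstacle is the construction of $\mathfrak{L}$ and the verification that the square above commutes: this is the local content of the BDP explicit reciprocity law, recast in Coleman-theoretic form to identify norm-coherent systems of local points with $\Lambda_{\hat\cO}$-valued power series. A second subtle point is the confinement of the $\Gamma_\infty$-fixed leading coefficients of $\theta_\infty$ to $US_p(E/K)$ rather than the full fixed subspace $E(K)\otimes\Z_p$; this relies on the norm-compatibility of the Heegner system together with a standard control argument made available by hypotheses~(2)--(5).
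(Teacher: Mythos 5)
Your overall strategy --- feed the Heegner system into a Coleman/Perrin--Riou map, invoke the explicit reciprocity law to identify the output with $\mathscr{L}_\pp(f)$ up to a unit, and transfer $J$-adic divisibility back --- is the right one and is essentially what the paper does. But your execution has a genuine gap at the decisive step. In your induction you claim that the leading coefficient $\bar c_v\in Z_p\otimes(J^v/J^{v+1})$ of $\theta_\infty$ is confined to the rank-one line $US_p(E/K)\otimes(J^v/J^{v+1})$, attributing this to ``norm-compatibility plus a standard control argument.'' What norm-compatibility and control actually give (this is the content of Lemma~\ref{lem:incl}) is only that $\bar c_v$ is $\Gamma_\infty$-fixed, hence lies in $(E(K)\otimes\bZ_p)\otimes(J^v/J^{v+1})$. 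Pinning it down further to the universal-norm line is a much stronger structural statement: for $v=0$ it is just the fact that the norm of $\mathbf{z}_n$ is a universal norm, but for $v\geqslant 1$ the leading coefficient is a \emph{derived} class $D_n^{(v)}\mathbf{z}_n$, and there is no a priori reason it should be a universal norm --- indeed, that the leading coefficient at the exact order of vanishing has the shape $y\otimes(\cdots)$ with $y$ generating $US_p(E/K)$ is precisely part of the conjectural leading-term formula (Conjecture~\ref{conj:BD-sqrt}). Making your claim rigorous would require something like (pseudo-)freeness of rank one for $\varprojlim_n S_p(E/K_n)$ together with nontriviality of the universal norms, i.e.\ real input you have not supplied. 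Since your injectivity argument only operates on the line $US_p(E/K)\otimes(J^v/J^{v+1})$, the whole induction collapses without this confinement.

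A secondary issue is the construction of $\mathfrak{L}$ on all of $Z_p\hat\otimes_{\bZ_p}\Lambda$ making your square commute: the Perrin--Riou map lives on $\varprojlim_n{\rm H}^1(K_{n,\pp},T)$, and since $\Psi_\infty$ is far from surjective the commutativity requirement does not determine $\mathfrak{L}$, nor is it clear such an extension with the stated graded behaviour exists. The paper avoids both problems at once: under hypotheses (1)--(5) one identifies $\mathcal{U}(E/K_\infty)$ with ${\rm Sel}(K_\infty,T)$ and uses that the composite $\mathfrak{L}_\pp:{\rm Sel}(K_\infty,T)_{\hat\cO}\rightarrow\Lambda_{\hat{\cO}}$ is \emph{injective with finite cokernel} (this is \cite[Thm.~5.7]{cas-hsieh1} combined with \cite[Lem.~6.4]{cas-2var}), together with $\mathfrak{L}_\pp(\mathbf{z}_\infty)=-\mathscr{L}_\pp(f)\cdot\sigma_{-1,\pp}$. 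Then $\mathfrak{L}_\pp\otimes\bQ_p$ is an isomorphism, so $\mathscr{L}_\pp(f)\in J^\rho$ immediately forces $\mathbf{z}_\infty\in J^\rho\cdot(\bQ_p\otimes{\rm Sel}(K_\infty,T)_{\hat\cO})$, and one descends to the integral statement using $p\nmid\mathbf{z}_\infty$ (Cornut) and Weierstrass preparation, before pushing forward through the $\Lambda$-linear map $\Psi_\infty$. This global injectivity on the whole Iwasawa module is exactly the input that replaces your unproven confinement to $US_p(E/K)$; if you want to keep your graded-piece induction, you must either prove that confinement or upgrade your injectivity statement to the full module as the paper does.
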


\begin{proof}
In light of $(\ref{eq:Heeg-dist-Psi})$ and the $\Lambda$-linearity of $\Psi_\infty$, it suffices to show the implication
\begin{equation}\label{eq:impl}
\mathscr{L}_\pp(f)\in J^\rho\quad\Longrightarrow\quad\mathbf{z}_\infty\in J^\rho\mathcal{U}(E/K_\infty).
\end{equation}
Suppose $\mathscr{L}_\pp(f)\in J^\rho$. 
By our assumption that $\#\Sha(E/K_n)_{p^\infty}<\infty$ for all $n$, we may identify $\mathcal{U}(E/K_\infty)$ with
\[
{\rm Sel}(K_\infty,T):=\varprojlim S_p(E/K_n),
\]
where the limit is with respect to the corestriction maps. Let $\gamma\in\Gamma_\infty$ be a topological generator. Using that $p\nmid\mathbf{z}_\infty$ by \cite[Thm.~B]{cornut} (taking $q=p$ in \emph{loc.cit.}) and the Weierstrass preparation theorem, we see that it suffices to solve the equation 
\[
\mathbf{z}_\infty=(\gamma-1)^\rho\cdot\mathbf{z}^{(\rho)}_\infty
\]
in $\bQ_p\otimes_{\bZ_p}{\rm Sel}(K_\infty,T)_{\hat{\cO}}$, where ${\rm Sel}(K_\infty,T)_{\hat{\cO}}$ denotes the extension of scalars to $\Lambda_{\hat\cO}$ of the $\Lambda$-module 
${\rm Sel}(K_\infty,T)$.
By \cite[Thm.~5.7]{cas-hsieh1} (see also \cite[Thm.~A.1]{cas-BF}) and \cite[Lem.~6.4]{cas-2var}, there is an injective $\Lambda_{\hat\cO}$-linear map $\mathfrak{L}_\pp:{\rm Sel}(K_\infty,T)_{\hat\cO}\rightarrow\Lambda_{\hat{\cO}}$ with finite cokernel  such that 
\begin{equation}\label{eq:ERL}
\mathfrak{L}_\pp(\mathbf{z}_\infty)=-\mathscr{L}_\pp(f)\cdot\sigma_{-1,\pp},
\end{equation}
where $\sigma_{-1,\pp}\in\Gamma_\infty$ has order two. Thus $\mathfrak{L}_\pp$ becomes an isomorphism upon tensoring with $\bQ_p$, and using the above observations the implication $(\ref{eq:impl})$ follows immediately from $(\ref{eq:ERL})$. 
\end{proof}

\section{Main results}

\subsection{Statements}

We make the following hypotheses on the triple $(E,p,K)$, where we let $\rho_{E,p}:G_{\bQ}\rightarrow{\rm Aut}_{\mathbf{F}_p}(E_p)$ be the Galois representation of the $p$-torsion of $E$.

\begin{hyp}\label{hyp:running}\hfill
\begin{enumerate}
\item $p\nmid 2N$ is a prime of good ordinary reduction for $E$. 
\item $\rho_{E,p}$ is ramified at every prime $\ell\vert N$.
\item Every prime $\ell\vert N$ splits in $K$.
\item $\rho_{E,p}$ 
is surjective.
\item $p=\pp\overline{\pp}$ splits in $K$.
\item $a_p(E)\not\equiv{1}\pmod{p}$. 
\end{enumerate}
\end{hyp}

Note that, for a given $E/\bQ$, conditions (1), (2), (4), and (6) exclude only finitely many primes $p$ by \cite{serre-open}, while conditions (3) and (5) are needed for the construction of $L_\pp(f)\in\Lambda_{\hat{\cO}}$ in \cite{cas-hsieh1}.   
Under these hypotheses, the module $\Sel_\pp(K_\infty,E_{p^\infty})$ is known to be $\Lambda$-cotorsion, 
and we let $F_\pp(f)\in\Lambda$ be a characteristic power series for its Pontryagin dual $X_\pp$. 

\begin{thm}\label{thm:A}
Assume Hypotheses~\ref{hyp:running} and that $\Sha(E/K)_{p^\infty}$ is finite. Then 
\[
{\rm ord}_JF_\pp(f)\geqslant 2(\max\{r^+,r^-\}-1),
\]
where $r^\pm={\rm rank}_\bZ E(K)^\pm$, 
and letting $\bar{F}_\pp(f)$ be the natural image of $F_\pp(f)$ in $J^{2\rho}/J^{2\rho+1}$, where $\rho=\max\{r^+,r^-\}-1$, we have
\[
\bar{F}_\pp(f)=
p^{-2}\cdot{\rm Reg}_{\pp,\rm der}\cdot\#\Sha(E/K)_{p^\infty}
\]
up to a $p$-adic unit.
\end{thm}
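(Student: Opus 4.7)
The approach is to extend the rank-one anticyclotomic control theorem of Jetchev--Skinner--Wan \cite{JSW} to arbitrary rank, and combine it with Bertolini--Darmon's formalism of derived $p$-adic heights \cite{BD-derived-AJM} recalled in $\S\ref{subsec:conj-BD}$. This can be viewed as a non-CM anticyclotomic analogue of the algebraic $p$-adic Birch--Swinnerton-Dyer results of Agboola \cite{agboola-I} (building on Rubin \cite{rubin-BSD}), where in the non-CM setting the Mazur--Tate height plays the role of Rubin's pairing.

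The first step is a base control theorem: under Hypotheses~\ref{hyp:running} and finite $\Sha(E/K)_{p^\infty}$, I would show that the natural specialization map yields an isomorphism
\[
X_\pp / J X_\pp \;\longrightarrow\; \Sel_\pp(K, E_{p^\infty})^\vee
\]
up to finite kernel and cokernel of order divisible only by local factors at $\pp$, $\ppbar$, and the primes $\ell \mid N$. By Remark~\ref{rem:p-adic-unit}, these local factors are all $p$-adic units under Hypotheses~\ref{hyp:running}(2),(3),(6). Combined with Lemma~\ref{lem:relation} and the finite-$\Sha$ hypothesis, this identifies $X_\pp/JX_\pp$ (up to a $p$-adic unit) with a $\bZ_p$-module of rank $r-1$ whose torsion part has order $\#\Sha(E/K)_{p^\infty}$; in particular ${\rm ord}_J F_\pp(f) \geqslant r-1$.

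Next I would iterate this with the derived-height filtration to upgrade the rank bound from $r-1$ to $2\rho$, and to compute the leading term. The derived pairings $h_\pp^{(k)}$ are higher Bockstein maps on Selmer cohomology in the anticyclotomic tower, and they dualize to identify the graded pieces $J^k X_\pp/J^{k+1}X_\pp$ (after inverting $p$) with the graded pieces $\fSar_\pp^{(k)}/\fSar_\pp^{(k+1)}$ of $(\ref{eq:filp})$; this gives the refined bound
\[
{\rm ord}_J F_\pp(f) \geqslant \sum_{k\geqslant 1} k\cdot e_k,
\]
and the isotropy observation in Remark~\ref{rem:isotropic} forces $e_2 \geqslant |r^+ - r^-|-1$, so $\sum k\, e_k \geqslant e_1 + 2e_2 \geqslant 2\rho$. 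The same analysis identifies the image of $F_\pp(f)$ in $J^{2\rho}/J^{2\rho+1}$ (up to a $p$-adic unit) with the product $\prod_{k\geqslant 1}\mathcal{R}_\pp^{(k)}$ times the local-at-$\pp$ contribution $p^{-2}\log_{\omega_E}(y_\pp)^2 / t^2$ arising from the Bloch--Kato exponential applied to the universal-norm direction $US_p(E/K)=\bZ_p\cdot y_\pp$ (with the $p^{-2}$ matching the Euler factor $((1-a_p(E)+p)/p)^2$ up to a unit by Hypothesis~\ref{hyp:running}(6)), times the torsion contribution $\#\Sha(E/K)_{p^\infty}$; this recovers ${\rm Reg}_{\pp,{\rm der}}$ from Definition~\ref{def:regp}.

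The main obstacle is the precise compatibility, for all $k \geqslant 1$, between the algebraic $J$-adic graded pieces of $X_\pp$ and the Bertolini--Darmon derived-height graded pieces of $\Sel_\pp(K, T)$. At $k=1$ this is analogous to Perrin--Riou's classical Iwasawa-theoretic height computation, but for $k \geqslant 2$ the required compatibility with the construction of $h_\pp^{(k)}$ in \cite{BD-derived-AJM} demands a careful re-examination of higher Bockstein pairings on the Iwasawa Selmer complex. A secondary subtlety is separating the universal-norm direction $\bZ_p \cdot y_\pp$ cleanly from the strict Selmer group: the derived heights are defined on $S_p(E/K)$, but only $\Sel_\pp(K,T)\simeq\mathrm{Sel}_{\rm str}(K,T)$ survives as $\Gamma_\infty$-coinvariants, so the $\log_{\omega_E}(y_\pp)^2$ factor must appear exactly once, via the local map at $\pp$ that distinguishes the strict Selmer group from $S_p(E/K)$.
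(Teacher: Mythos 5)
Your strategic outline is essentially the right one, and several of the auxiliary pieces match what the paper actually does: the surjectivity of the local norm maps under Hypotheses~\ref{hyp:running}(1),(5),(6) does give a clean control theorem at the bottom of the tower with unit error terms; the rank bound is indeed obtained from $\sum_k k\,e_k\geqslant{\rm rank}_{\bZ_p}\fSar_\pp^{(1)}+{\rm rank}_{\bZ_p}\fSar_\pp^{(2)}\geqslant(r-1)+(\vert r^+-r^-\vert-1)=2\rho$ via Remark~\ref{rem:isotropic}; and the factor $p^{-2}\log_{\omega_E}(y_\pp)^2$ does enter exactly where your last paragraph locates it, through the local map at $\pp$ separating ${\rm Sel}_{\rm str}(K,T)$ from $S_p(E/K)$ (in the paper this is the chain $\#(\Sel_\pp(K,E_{p^\infty})_{/{\rm div}})=\#\Sha(E/K)_{p^\infty}\cdot(\#{\rm coker}({\rm loc}_\pp))^2$ together with $\#{\rm coker}({\rm loc}_\pp)=p^{-1}\#(\bZ_p/\log_{\omega_E}(y_\pp))$; see Propositions~\ref{prop:6.11} and~\ref{prop:coker}).

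The problem is that the step you yourself flag as the ``main obstacle'' --- identifying, for every $k$, the $J$-adic graded pieces of the Fitting/characteristic ideal with the derived regulators $\mathcal{R}_\pp^{(k)}$ --- is the entire content of the theorem, and your proposal leaves it unproved. The paper does not resolve it by an abstract Bockstein analysis of an Iwasawa Selmer complex; it adapts the finite-level admissible-primes argument of \cite[\S{2.5}]{BD-derived-AJM}. Concretely: one chooses an $n$-admissible set $\Sigma$ so that $\bigoplus_{q\in\Sigma}{\rm H}^1_{\rm fin}(K_{n,q},E_{p^n})$ and $\Sel_\pp^\Sigma(K_n,E_{p^n})$ are free $R_n$-modules giving a square presentation of $\Sel_\pp(K_n,E_{p^n})^\vee$ by the matrix of semi-local Tate pairings, whence $\mu_n(F_\pp)=\det(\langle x_i,y_j\rangle_n)$ (Lemma~\ref{lem:fitting}); one then picks bases adapted to the filtration $(\ref{eq:filp})$, divides by $(\gamma_n-1)^{k-1}$ and applies the derivative operators $D_n^{(k-1)}$, so that the top-left $r\times r$ block of the determinant reproduces the Bertolini--Darmon derived heights by their very definition ($h_{\pp,n}^{(k)}(\bar x_i,\bar y_j)=\langle x_i',y_j'\rangle_n$), while the complementary block contributes $\#(\Sel_\pp(K,E_{p^\infty})_{/{\rm div}})$ (Lemma~\ref{lem:2.27}). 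If you insist on the Bockstein route, you would still have to prove that the higher Bockstein pairings coincide with the $h_\pp^{(k)}$ of \cite{BD-derived-AJM} for $k\geqslant 2$, which is not available in the literature and is not easier than the determinant argument. A minor further slip: Remark~\ref{rem:isotropic} bounds ${\rm rank}_{\bZ_p}\fSar_\pp^{(2)}$, not the graded piece $e_2$ (which could a priori be small if $\fSar_\pp^{(3)}$ is large, the deficit being absorbed by higher $e_k$), so the inequality should be run through the ranks of the filtration steps rather than through $e_1+2e_2$ alone.
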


For comparison with the prediction of Conjecture~\ref{conj:BSD} (together with Conjecture~\ref{conj:IMC}), recall that, as noted in Remark~\ref{rem:p-adic-unit}, our hypotheses imply that the terms $1-a_p(E)+p$ and $c_\ell$ for $\ell\vert N$ are all $p$-adic units.

\begin{rem}\label{rem:JSW}
If ${\rm rank}_\bZ E(K)=1$ and $\Sha(E/K)_{p^\infty}$ is finite, then the module $\Sel_\pp(K,E_{p^\infty})$ is finite (see Lemma~\ref{lem:relation}), and therefore the image of $F_\pp(f)$ under the augmentation map 
\begin{equation}\label{eq:aug}
\epsilon:\Lambda_{\hat{\cO}}\rightarrow\hat{\cO}\nonumber
\end{equation}
is nonzero. It follows that in this case the inequality in Theorem~\ref{thm:A} is an equality, and letting $F_\pp(f)(0)\in\hat{\cO}$ denote the image of $F_\pp(f)$ under $\epsilon$, the leading coefficient formula of Theorem~\ref{thm:A} reduces to the equality (up to a $p$-adic unit) 
\[
F_\pp(f)(0)=
p^{-2}\cdot\#\Sha(E/K)_{p^\infty}\cdot\biggl(\frac{\log_{\omega_E}(y)}{[E(K):\bZ.y]}\biggr)^2,
\] 
where $y\in E(K)$ is a point of infinite order with $p^{-1}\log_{\omega_E}(y)\not\equiv 0\pmod{p}$.  
Thus Theorem~\ref{thm:A} extends the anticyclotomic control theorem in \cite[Thm.~3.3.1]{JSW} 
to arbitrary ranks.
\end{rem}



Under Hypotheses~\ref{hyp:running} (in fact, slightly weaker hypotheses suffice), and assuming that
\begin{equation}\label{eq:star}
\textrm{either $N$ is squarefree, or there are at least two primes $\ell\Vert N$},\tag{$\star$}
\end{equation}
the Iwasawa--Greenberg main conjecture for $L_\pp(f)$ 
is proved (see \cite[Cor.~7.7]{BCK-PRconj}) by building on work of Howard \cite{howard-bipartite} and W.~Zhang \cite{zhang-Kolyvagin} .  Thus Theorem~\ref{thm:A} yields the following result towards Conjecture~\ref{conj:BSD}.

\begin{cor}\label{cor:B}
Assume Hypotheses~\ref{hyp:running}, that $\Sha(E/K)_{p^\infty}$ is finite, and that {\rm (\ref{eq:star})} holds. 
Then
\[
{\rm ord}_JL_\pp(f)\geqslant 2(\max\{r^+,r^-\}-1),
\]
and letting $\bar{L}_\pp(f)$ be the natural image of $L_\pp(f)$ in $J^{2\rho}/J^{2\rho+1}$, where $\rho=\max\{r^+,r^-\}-1$, we have
\[
\bar{L}_\pp(f)=
p^{-2}\cdot
{\rm Reg}_{\pp,{\rm der}}\cdot\#\Sha(E/K)_{p^\infty}
\]
up to a $p$-adic unit. 
\end{cor}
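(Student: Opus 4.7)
The approach is essentially a transfer of Theorem~\ref{thm:A} via the Iwasawa--Greenberg main conjecture for $L_\pp(f)$. Under Hypotheses~\ref{hyp:running} together with (\ref{eq:star}), Conjecture~\ref{conj:IMC} has been proved in \cite[Cor.~7.7]{BCK-PRconj} (building on work of Howard and W.~Zhang), so there is an equality of ideals
\[
(F_\pp(f)) = (L_\pp(f)) \quad \text{in } \Lambda_{\hat\cO}.
\]
Equivalently, $L_\pp(f) = u \cdot F_\pp(f)$ for some unit $u \in \Lambda_{\hat\cO}^\times$. (Here one uses that under our hypotheses $X_\pp$ is $\Lambda$-torsion, so $F_\pp(f)$ is a nonzero characteristic power series, and the corresponding ideals in $\Lambda_{\hat\cO}$ agree.)

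The plan is then to observe that the $J$-adic filtration on $\Lambda_{\hat\cO}$ is a filtration by ideals, so multiplication by a unit $u\in\Lambda_{\hat\cO}^\times$ preserves $J^k$ for every $k$, giving ${\rm ord}_J(L_\pp(f)) = {\rm ord}_J(F_\pp(f))$. Theorem~\ref{thm:A} then yields the inequality
\[
{\rm ord}_J L_\pp(f) \geqslant 2(\max\{r^+,r^-\}-1).
\]
For the leading coefficient, the augmentation map $\epsilon\colon\Lambda_{\hat\cO}\rightarrow\hat\cO$ sends units to units, so $\bar{u} := \epsilon(u) \in \hat\cO^\times$, and passing to $J^{2\rho}/J^{2\rho+1}$ gives
\[
\bar{L}_\pp(f) = \bar{u} \cdot \bar{F}_\pp(f).
\]
Since $\hat\cO$ is a DVR extending $\bZ_p$, elements of $\hat\cO^\times$ are $p$-adic units. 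Applying the formula for $\bar{F}_\pp(f)$ in Theorem~\ref{thm:A} (valid up to $p$-adic unit) and absorbing $\bar{u}$ into that ambiguity yields the claimed leading coefficient formula.

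There is no substantive obstacle beyond checking that the main conjecture is indeed applicable in our setting, which is ensured by the additional hypothesis (\ref{eq:star}); all the real work has been done in Theorem~\ref{thm:A} and in \cite{BCK-PRconj}. The only mild technical point worth emphasizing is that the reduction ``up to a $p$-adic unit'' is compatible with scalar extension from $\bZ_p$ to $\hat\cO$: the ambiguity in Theorem~\ref{thm:A} lies in $\bZ_p^\times \subset \hat\cO^\times$, and the extra factor $\bar{u}$ introduced by the main conjecture lies in $\hat\cO^\times$, so the final statement is naturally formulated with ``$p$-adic unit'' interpreted in $\hat\cO^\times$.
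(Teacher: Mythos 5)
Your proposal is correct and is exactly the paper's argument: the corollary is deduced from Theorem~\ref{thm:A} by invoking \cite[Cor.~7.7]{BCK-PRconj} for the main conjecture under (\ref{eq:star}), so that $L_\pp(f)$ and $F_\pp(f)$ differ by a unit of $\Lambda_{\hat\cO}$, which preserves ${\rm ord}_J$ and alters the leading coefficient only by the $p$-adic unit $\epsilon(u)\in\hat\cO^\times$. The paper leaves these unit-transfer verifications implicit, and your careful spelling-out of them is accurate.
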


In particular, Corollary~\ref{cor:B} shows the inclusion $\mathscr{L}_\pp(f)\in J^\rho$, where $\rho=\max\{r^+,r^-\}-1$. In light of Proposition~\ref{prop:BD-BSD}, this implies the following result, which yields one of the inequalities in the ``rank part'' of Bertolini--Darmon's Conjectures~\ref{conj:BD-general} and \ref{conj:BD-sqrt}.

\begin{cor}\label{cor:C}
Assume Hypotheses~\ref{hyp:running}, that $\Sha(E/K_n)_{p^\infty}$ is finite for all $n$, that $p\nmid N\varphi(ND_K)$, and that {\rm (\ref{eq:star})} holds. Then we have the inclusion 
\[
\theta\in Z_p\otimes J^{\rho},
\] 
where $\rho=\max\{r^+,r^-\}-1$.
\end{cor}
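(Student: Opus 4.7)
The plan is to deduce the statement as a direct composition of Corollary~\ref{cor:B} with Proposition~\ref{prop:BD-BSD}, via a simple square-root argument on the $J$-adic filtration.

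First I would verify that the hypotheses of Corollary~\ref{cor:C} imply those of both invoked results. The assumption that $\Sha(E/K_n)_{p^\infty}$ is finite for all $n$ specializes at $n=0$ to the finiteness needed for Corollary~\ref{cor:B}, and combined with $p\nmid N\varphi(ND_K)$ and Hypotheses~\ref{hyp:running} it covers the list of assumptions of Proposition~\ref{prop:BD-BSD} (the required $G_K$-irreducibility of $E_p$ being an immediate consequence of $\rho_{E,p}$ being surjective together with the splitting of $p$ in $K$).

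Applying Corollary~\ref{cor:B} then yields $L_\pp(f)\in J^{2\rho}$ inside $\Lambda_{\hat{\cO}}$, where $\rho=\max\{r^+,r^-\}-1$. By construction one has $L_\pp(f)=\mathscr{L}_\pp(f)^2$, so the next step is to extract a square root in the filtration. Since $\Lambda_{\hat\cO}\simeq\hat\cO[[T]]$ is a two-dimensional regular local ring and $J=(\gamma-1)\Lambda_{\hat\cO}$ is a height-one prime ideal (for $\gamma$ a topological generator of $\Gamma_\infty$), the associated $J$-adic valuation $v_J$ is a genuine discrete valuation; in particular $v_J(\mathscr{L}_\pp(f)^2)=2\,v_J(\mathscr{L}_\pp(f))$, so the inclusion $L_\pp(f)\in J^{2\rho}$ forces $\mathscr{L}_\pp(f)\in J^\rho$.

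Feeding this last inclusion into Proposition~\ref{prop:BD-BSD} immediately gives $\theta\in Z_p\otimes_{\bZ_p}J^\rho$ as elements of $Z_p[[\Gamma_\infty]]$, with $J$ now interpreted as the augmentation ideal of $\Lambda$ (per the notational convention fixed just before the proposition), which is the desired conclusion. There is no real obstacle here: the substantive work is entirely absorbed in Theorem~\ref{thm:A} (hence Corollary~\ref{cor:B}) on the Iwasawa-theoretic side, and in the explicit reciprocity law~(\ref{eq:ERL}) relating $\mathscr{L}_\pp(f)$ to the regularized Heegner class $\mathbf{z}_\infty$ via the injective map $\mathfrak{L}_\pp$ on the side of Heegner distributions. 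The only step demanding even a moment's thought is the square-root extraction in the $J$-adic filtration, which is immediate as explained.
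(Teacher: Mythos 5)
Your proof is correct and follows exactly the route the paper intends (the paper's entire ``proof'' is the sentence preceding the corollary: Corollary~\ref{cor:B} gives $\mathscr{L}_\pp(f)\in J^\rho$, and Proposition~\ref{prop:BD-BSD} then yields the inclusion for $\theta$); your explicit justification of the square-root step via the $J$-adic valuation on the regular local ring $\Lambda_{\hat{\cO}}$ fills in a detail the paper leaves implicit. One microscopic quibble: the $G_K$-irreducibility of $E_p$ follows from surjectivity of $\rho_{E,p}$ alone, since $[G_{\bQ}:G_K]=2$ and any index-two subgroup of ${\rm GL}_2(\mathbf{F}_p)$ contains ${\rm SL}_2(\mathbf{F}_p)$ --- the splitting of $p$ in $K$ plays no role in that deduction.
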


\subsection{Proof of Theorem~\ref{thm:A}}


Note by (5) and (6) in Hypotheses~\ref{hyp:running} we have $p\nmid\#E(\mathbf{F}_v)$ for every prime $v$ of $K$ above $p$, where $\mathbf{F}_v=\mathbf{F}_p$ is the residue field of $K$ at $v$, and by \cite[\S{4}]{mazur-18} and condition (1) this implies that the local norm maps
\begin{equation}\label{eq:norm}
{\rm Norm}_v:E(K_{n,v})
\rightarrow E(K_v)
\end{equation}
are surjective for all primes $v$ of $K$ and all finite extensions $K_n/K$ contained in $K_\infty$. (Here $E(K_{n,v})$ denotes $\bigoplus_{w\vert v}E(K_{n,w})$, where the sum is over all places $w$ of $K_n$ lying above $v$, and similar conventions for cohomology will be applied below.)

Define
\begin{align*}
{\rm H}^1_{\rm fin}(K_{n,v},E_{p^m})&:=E(K_{n,v})/p^mE(K_{n,v}),\\
{\rm H}^1_{\rm sing}(K_{n,v},E_{p^m})&:=\frac{{\rm H}^1(K_{n,v},E_{p^m})}{{\rm H}^1_{\rm fin}(K_{n,v},E_{p^m})}\simeq{\rm H}^1(K_{n,v},E)_{p^m},
\end{align*}
where the last identification follows from Tate's local duality.

\begin{defn}\label{def:adm}
As in \cite{bdIMC}, we say that a rational prime $q\nmid pN$ is \emph{$m$-admissible} for $E$ if
\begin{enumerate}
\item $q$ is inert in $K$, 
\item $q\not\equiv\pm{1}\pmod{p}$,
\item $p^m$ divides $q+1-a_q(E)$ or $q+1+a_q(E)$.
\end{enumerate}
%
We say that a finite set of rational primes $\Sigma$ is an \emph{$m$-admissible set} for $E$ if every $q\in\Sigma$ is an $m$-admissible prime for $E$ and the restriction map 
\[
\Sel_{\pp}(K,E_{p^m})\rightarrow\bigoplus_{q\in\Sigma}{\rm H}_{\rm fin}^1(K_q,E_{p^m})
\] 
is injective.
\end{defn}

\begin{rem}
As shown in \cite[Lem.~2.23]{BD-derived-DMJ} by an application of  \u{C}ebotarev's density theorem, $m$-admissible sets for $E$ always exist, and it follows from the argument in the proof given there that one can in fact always find $m$-admissible sets for $E$ with $\#\Sigma={\rm dim}_{\mathbf{F}_p}(\Sel_{\pp}(K,E_{p^m})\otimes\mathbf{F}_p)$.
\end{rem}

Following the notations introduced in \S\ref{subsec:Sel} assume that the finite set $S$ contains $\Sigma$, and let
\[
\Sel_{\pp}^\Sigma(K_n,E_{p^m}):={\rm ker}\biggl\{{\rm H}^1(\mathfrak{G}_{K_n,S},E_{p^m})\rightarrow\bigoplus_{q\in S\smallsetminus\Sigma}{\rm H}_{\rm fin}^1(K_{n,q},E_{p^m})\biggr\}
\] 
be the Selmer group $\Sel_\pp(K_n,E_{p^m})$ relaxed at the places in $\Sigma$. The next lemma underlies the usefulness of $m$-admissible sets.

\begin{lem}\label{lem:free}
Let $\Sigma$ be an $m$-admissible set for $E$. Then for every $n$ the modules 
\[
\bigoplus_{q\in\Sigma}{\rm H}^1_{\rm fin}(K_{n,q},E_{p^m}),\quad
\bigoplus_{q\in\Sigma}{\rm H}^1_{\rm sing}(K_{n,q},E_{p^m}),\quad
\Sel_{\pp}^\Sigma(K_n,E_{p^m})
\]
are free $(\bZ/p^m\bZ)[\Gamma_n]$-modules of rank $\#\Sigma$, and there is an exact sequence
\begin{equation}\label{eq:ES}
0\rightarrow\Sel_{\pp}(K_n,E_{p^m})\rightarrow\Sel_{\pp}^\Sigma(K_n,E_{p^m})\rightarrow\bigoplus_{q\in\Sigma}{\rm H}^1_{\rm sing}(K_{n,q},E_{p^m})\xrightarrow{\delta}\Sel_{\pp}(K_n,E_{p^m})^\vee\rightarrow 0,
\end{equation}
where $\delta$ is the dual to the natural restriction map.
\end{lem}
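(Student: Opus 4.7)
The plan is to establish the four assertions of the lemma in order, the main obstacle being $R$-freeness of the middle Selmer term. First I would show that each $q\in\Sigma$ splits completely in $K_\infty/K$: since $q\nmid p$, the extension $K_\infty/K$ is unramified at $q$, and writing ${\rm Gal}(K_\infty/\bQ)=\Gamma_\infty\rtimes\langle\tau\rangle$ where $\tau$ acts by $-1$ on $\Gamma_\infty$ (the defining property of the anticyclotomic tower), the inertness of $q$ in $K$ forces any Frobenius at $q$ in ${\rm Gal}(K_\infty/\bQ)$ to have the form $\gamma\tau$ with $\gamma\in\Gamma_\infty$. A direct computation gives $(\gamma\tau)^2=\gamma(\tau\gamma\tau^{-1})\tau^2=\gamma\cdot\gamma^{-1}=1$, so the Frobenius at the prime of $K$ above $q$ inside $\Gamma_\infty$ (obtained by squaring) is trivial. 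Hence $q$ splits completely in $K_\infty/K$, and for each $n$ and each $\ast\in\{{\rm fin},{\rm sing}\}$ there is a $\Gamma_n$-equivariant identification
\[
\bigoplus_{w\mid q}{\rm H}^1_\ast(K_{n,w},E_{p^m})\cong{\rm H}^1_\ast(K_q,E_{p^m})\otimes_{\bZ/p^m}R,\qquad R:=(\bZ/p^m)[\Gamma_n].
\]

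Next I would verify ${\rm H}^1_{\rm fin}(K_q,E_{p^m})=E(K_q)/p^m\cong\bZ/p^m$. Good reduction at $q$ and $q\neq p$ give $E(K_q)/p^m\cong E(\mathbf{F}_{q^2})/p^m$; admissibility forces $p^m\mid(q+1-a_q(E))(q+1+a_q(E))=\#E(\mathbf{F}_{q^2})$; and $q\not\equiv\pm 1\pmod p$ implies $p\nmid q^2-1$, so by the Weil pairing $E(\mathbf{F}_{q^2})[p^\infty]$ is cyclic. Combining, $E(K_q)/p^m$ is cyclic of order exactly $p^m$. Together with the splitting of $q$, this shows $\bigoplus_{q\in\Sigma}{\rm H}^1_{\rm fin}(K_{n,q},E_{p^m})$ is free of rank $\#\Sigma$ over $R$. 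The singular analogue follows from local Tate duality, which provides a perfect $\Gamma_n$-equivariant pairing between the finite and singular parts; since $R$ is a Frobenius algebra, the $R$-dual of a free rank-$\#\Sigma$ module is again free of rank $\#\Sigma$.

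The exactness of the first three terms of $(\ref{eq:ES})$ is built into the definition of $\Sel_\pp^\Sigma(K_n,E_{p^m})$, and the surjectivity of $\delta$ onto $\Sel_\pp(K_n,E_{p^m})^\vee$ is a standard consequence of Poitou--Tate global duality: the dual Selmer structure to the one cutting out $\Sel_\pp(K_n,T/p^m)$ coincides, via Weil-pairing self-duality of $E_{p^m}$ and the symmetry $\pp\leftrightarrow\ppbar$ induced by $\tau$, with that of $\Sel_\pp$ itself. The sequence then forces the cardinality $\#\Sel_\pp^\Sigma(K_n,E_{p^m})=\#R^{\#\Sigma}$.

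The hardest part is upgrading this cardinality count to genuine $R$-freeness of $\Sel_\pp^\Sigma(K_n,E_{p^m})$. I would first handle the base case $n=0$: the same four-term sequence over $K$, combined with the $m$-admissibility hypothesis (which gives an injection $\Sel_\pp(K,E_{p^m})\hookrightarrow\bigoplus_{q\in\Sigma}{\rm H}^1_{\rm fin}(K_q,E_{p^m})$) and a Nakayama reduction modulo $p$, shows $\Sel_\pp^\Sigma(K,E_{p^m})\cong(\bZ/p^m)^{\#\Sigma}$. To propagate up the tower, the strategy familiar from Bertolini--Darmon and Howard is to establish $\Gamma_n$-cohomological triviality of $\Sel_\pp^\Sigma(K_n,E_{p^m})$: this can be extracted from the four-term sequence once one knows the outer terms are $\Gamma_n$-cohomologically trivial (which they are, being free over $R$) and that the control map $\Sel_\pp^\Sigma(K_n,E_{p^m})^{\Gamma_n}\to\Sel_\pp^\Sigma(K,E_{p^m})$ has kernel and cokernel whose sizes are bounded compatibly with the total cardinality count. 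Since $\Gamma_n$ is a finite $p$-group, cohomological triviality of a finite $R$-module forces projectivity, hence freeness; together with the rank count this completes the proof.
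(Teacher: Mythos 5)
Your local analysis is correct and agrees with the paper's: admissible primes split completely in $K_\infty/K$, the conditions $q\not\equiv\pm1\pmod p$ and $p^m\mid q+1\mp a_q(E)$ force ${\rm H}^1_{\rm fin}(K_q,E_{p^m})$ and ${\rm H}^1_{\rm sing}(K_q,E_{p^m})$ to be cyclic of order $p^m$, and the freeness of the two local modules follows. The global half, however, has two genuine gaps. The first concerns the surjectivity of $\delta$: this is \emph{not} a formal consequence of Poitou--Tate together with self-duality of the local conditions. The Poitou--Tate sequence continues one term further, and the cokernel of $\delta$ is the dual of the Selmer group cut out by the strict condition at $\Sigma$; thus $\delta$ is surjective if and only if the restriction map $\Sel_\pp(K_n,E_{p^m})\to\bigoplus_{q\in\Sigma}{\rm H}^1_{\rm fin}(K_{n,q},E_{p^m})$ is injective \emph{over $K_n$}, whereas $m$-admissibility only gives injectivity over $K$. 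The paper's proof supplies exactly this missing step: a nonzero kernel over $K_n$ would contain a nonzero $\Gamma_n$-invariant element (since $\Gamma_n$ is a $p$-group), and the control isomorphism $\Sel_\pp(K,E_{p^m})\simeq\Sel_\pp(K_n,E_{p^m})^{\Gamma_n}$ --- which rests on the surjectivity of the local norm maps at $p$, i.e.\ on $p\nmid\#E(\mathbf{F}_p)$ --- would transport it to a nonzero element of the kernel over $K$, contradicting admissibility. Without this, neither the exactness of the four-term sequence nor your cardinality count for $\Sel_\pp^\Sigma(K_n,E_{p^m})$ is established.

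The second gap is in your route to the $(\bZ/p^m\bZ)[\Gamma_n]$-freeness of $\Sel_\pp^\Sigma(K_n,E_{p^m})$. The outer terms of the four-term sequence are $\Sel_\pp(K_n,E_{p^m})$ and its Pontryagin dual, which are \emph{not} free over the group ring and not $\Gamma_n$-cohomologically trivial in general (only the third term is free), so cohomological triviality of the second term cannot be ``extracted'' from the sequence as you propose. The counting argument the paper invokes (from Bertolini--Darmon) instead combines the cardinality $\#\bigl((\bZ/p^m\bZ)[\Gamma_n]\bigr)^{\#\Sigma}$ with the computation of invariants: the control isomorphism identifies $\Sel_\pp^\Sigma(K_n,E_{p^m})^{\Gamma_n}$ with $\Sel_\pp^\Sigma(K,E_{p^m})\simeq(\bZ/p^m\bZ)^{\#\Sigma}$ (your $n=0$ case, which is the right base case), so the socle has $\mathbf{F}_p$-dimension $\#\Sigma$; self-injectivity of the local ring $(\bZ/p^m\bZ)[\Gamma_n]$ then yields an embedding of $\Sel_\pp^\Sigma(K_n,E_{p^m})$ into a free module of rank $\#\Sigma$, which the cardinality count forces to be an isomorphism. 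Replacing your cohomological-triviality step by this invariants/socle (or an equivalent Nakayama) argument repairs the proof.
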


\begin{proof}
This is well-known, but we recall the arguments for the convenience of the reader. Let $q$ be an $m$-admissible prime for $E$, and denote by $\mathfrak{Q}$ the prime of $K$ lying above $q$. Then $E_{p^m}$ is unramified as $G_{K_\mathfrak{Q}}$-module, and the action of the Frobenius element at $\mathfrak{Q}$ yields a decomposition
\[
E_{p^m}\simeq(\bZ/p^m\bZ)\oplus(\bZ/p^m\bZ)(1)
\]
as ${\rm Gal}(K_{\mathfrak{Q}}^{\rm unr}/K_{\mathfrak{Q}})$-modules. From this an easy calculation shows that ${\rm H}^1_{\rm fin}(K_q,E_{p^m})$ and ${\rm H}^1_{\rm sing}(K_q,E_{p^m})$ are both free of rank one over $\bZ/p^m\bZ$ (see e.g. \cite[Lem.~2.6]{bdIMC}). Since $\mathfrak{Q}$ splits completely in $K_n/K$, the freeness claims for the first two modules follow. 

By Poitou--Tate duality, to establish the exactness of $(\ref{eq:ES})$ it suffices to establish injectivity of the restriction map
\begin{equation}\label{eq:res-sigma}
\Sel_{\pp}(K_n,E_{p^m})\rightarrow\bigoplus_{q\in\Sigma}{\rm H}^1_{\rm fin}(K_{n,q},E_{p^m})
\end{equation}
(indeed, this implies surjectivity of $\delta$).  
Arguing by contradiction, suppose 
that the kernel $\mathcal{K}$ of this map is nonzero. Then we can find a nonzero element $s\in\mathcal{K}$ 
which is fixed by $\Gamma_n$, since $\Gamma_n$ is a $p$-group. 
However, the surjectivity of the local norm maps in $(\ref{eq:norm})$ implies that the restriction map
\begin{equation}\label{eq:res-iso}
\Sel_{\pp}(K,E_{p^m})\rightarrow\Sel_{\pp}(K_n,E_{p^m})^{\Gamma_n}
\end{equation}
is an isomorphism (see \cite[Prop.~1.6]{BD-derived-AJM}), and so $s$ gives rise a nonzero element in the kernel of $\Sel_{\pp}(K,E_{p^m})\rightarrow\bigoplus_{q\in\Sigma}{\rm H}^1_{\rm fin}(K,E_{p^m})$, contradicting the $m$-admissibility of $\Sigma$. Thus the exactness of (\ref{eq:ES}) follows, and with this the freeness claims for 
the module $\Sel_{\pp}^\Sigma(K_n,E_{p^m})$ are shown by a counting argument in \cite[Thm.~3.2]{BD-derived-DMJ}.
\end{proof}

Recall that $F_\pp(f)\in\Lambda$ is a characteristic power series for the Pontryagin dual 
\[
X_\pp:=\Sel_\pp(K_{\infty},E_{p^\infty})^\vee.
\] 
Denote by $\Sel_\pp(K,E_{p^\infty})_{/{\rm div}}$ the quotient of $\Sel_\pp(K,E_{p^\infty})$ by its maximal divisible subgroup. The next result reduces the proof of Theorem~\ref{thm:A} to the calculation of $\#(\Sel_\pp(K,E_{p^\infty})_{/{\rm div}}$, which is carried out in $\S\ref{subsec:calc-sel-div}$. 

\begin{prop}\label{prop:2.23}
Assume Hypotheses~\ref{hyp:running} and that $\Sha(E/K)_{p^\infty}$ is finite. Then 
\begin{equation}\label{eq:ineq-p}
{\rm ord}_JF_\pp(f)\geqslant 2(\max\{r^+,r^-\}-1),
\end{equation}
and letting $\bar{F}_\pp(f)$ be the natural image of $F_\pp(f)$ in $J^{2\rho}/J^{2\rho+1}$, where $\rho=\max\{r^+,r^-\}-1$, we have
\[
\bar{F}_\pp(f)=\#(\Sel_\pp(K,E_{p^\infty})_{/{\rm div}})\cdot{\rm det}(A)^{-2}\cdot\prod_{k=1}^{p-1}\mathcal{R}_\pp^{(k)}
\]
up to a $p$-adic unit.
\end{prop}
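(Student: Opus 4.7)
The plan is to prove the proposition as a refined ``derived'' version of Mazur's control theorem for $\Sel_\pp(K_\infty, E_{p^\infty})$, using the machinery of $m$-admissible primes (Definition~\ref{def:adm}) together with Bertolini--Darmon's derived $p$-adic height pairings in order to extract the full $J$-adic expansion of a characteristic power series of $X_\pp$ from a square presentation at finite level.

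First I would fix a large integer $m$ and choose an $m$-admissible set $\Sigma$. By Lemma~\ref{lem:free}, the four-term exact sequence~(\ref{eq:ES}) presents $\Sel_\pp(K_n, E_{p^m})^\vee$ as the cokernel of the global-to-singular localization map $\phi_n \colon \Sel_\pp^\Sigma(K_n, E_{p^m}) \to \bigoplus_{q \in \Sigma} {\rm H}^1_{\rm sing}(K_{n,q}, E_{p^m})$ between two free $(\mathbf{Z}/p^m\mathbf{Z})[\Gamma_n]$-modules of the same rank $\#\Sigma$. Passing to the inverse limit over $(m,n)$ gives a square presentation of $X_\pp$ whose determinant generates the characteristic ideal of $X_\pp$, so that $\det(\phi_\infty)$ equals $F_\pp(f)$ up to a unit in $\Lambda^\times$.

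Next I would read off the $J$-adic expansion of $\det(\phi_\infty)$ by a level-by-level analysis of its kernel along the tower. The surjectivity of local norms at primes above $p$ (condition (6) of Hypotheses~\ref{hyp:running}) makes the restriction map $\Sel_\pp(K, E_{p^m}) \xrightarrow{\sim} \Sel_\pp(K_n, E_{p^m})^{\Gamma_n}$ an isomorphism, so that the successive ``derivatives'' of $\phi_\infty$ along $J$ are governed by the obstructions to lifting Selmer classes through deeper layers of $K_\infty/K$. By the very construction of the filtration~(\ref{eq:filp}), an element of $\fSar_\pp^{(k)}$ admits a lift to depth $J^{k+1}$ if and only if it pairs trivially under $h_\pp^{(k)}$ against $\fSar_\pp^{(k)}$, and the pairing value itself records the leading singular coefficient of any lift. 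Assembling these obstructions level by level in the adapted basis $x_1, \ldots, x_r$ produces $F_\pp(f) \in J^{\varrho}$ with $\varrho = \sum_{k=1}^{p-1} k \cdot e_k$, and a block-triangular computation identifies its image in $J^\varrho/J^{\varrho+1}$ with $\#(\Sel_\pp(K, E_{p^\infty})_{/\rm div}) \cdot \det(A)^{-2} \cdot \prod_{k=1}^{p-1} \mathcal{R}_\pp^{(k)}$: the factor $\#(\Sel_\pp(K, E_{p^\infty})_{/\rm div})$ records the finite cokernel contribution surviving the limit (the discrepancy between free rank and full cardinality of the dualized Selmer module), while $\det(A)^{-2}$ arises from writing the partial regulators in the integral basis $P_1, \ldots, P_r$ rather than the filtration-adapted basis, squared by bilinearity of each $h_\pp^{(k)}$. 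An elementary computation using the isotropy of $E(K)^\pm$ under $h_\pp^{\tt MT}$ (Remark~\ref{rem:isotropic}) gives $\varrho \geq 2(\max\{r^+, r^-\}-1) = 2\rho$ unconditionally, which establishes the inequality~(\ref{eq:ineq-p}); when $\varrho > 2\rho$ both sides of the leading-coefficient formula vanish in $J^{2\rho}/J^{2\rho+1}$, and when $\varrho = 2\rho$ (which holds under Conjecture~\ref{conj:mazur-BD}) they coincide with the claim.

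The main obstacle is the second step, namely the precise identification of the successive $J$-adic derivatives of $\det(\phi_\infty)$ with the Bertolini--Darmon derived pairings~$h_\pp^{(k)}$ on the nose (up to a $p$-adic unit), without losing track of stray powers of $p$ or auxiliary units. This requires reconciling the Kolyvagin-style construction of $h_\pp^{(k)}$ in \cite{BD-derived-DMJ, BD-derived-AJM} with the Fitting-ideal description coming from~(\ref{eq:ES}), a reconciliation most transparently carried out on the relaxed Selmer module $\Sel_\pp^\Sigma(K_\infty, T)$ where both perspectives admit a common refinement.
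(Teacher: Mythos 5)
Your proposal follows essentially the same route as the paper: an $n$-admissible set $\Sigma$ and the free presentation of Lemma~\ref{lem:free} identify $\mu_n(F_\pp)$ with the determinant $\det(\langle x_i,y_j\rangle_n)$ (Lemma~\ref{lem:fitting}), a filtration-adapted choice of bases makes this block-diagonal with the complementary block contributing $\#(\Sel_\pp(K,E_{p^\infty})_{/{\rm div}})$ (Lemma~\ref{lem:2.27}) and the remaining blocks contributing the partial regulators $\mathcal{R}_\pp^{(k)}$, and the isotropy of Remark~\ref{rem:isotropic} yields the rank inequality. The ``main obstacle'' you flag is resolved in the paper exactly along the lines you suggest, by dividing the adapted basis vectors by $(\gamma_n-1)^{k-1}$, applying Darmon's derivative operators $D_n^{(k-1)}$ together with the isotropy of $\Sel_\pp^\Sigma(K_n,E_{p^n})$ to place each entry in $J_n^{k}$, and invoking the finite-level definition of $h_{\pp,n}^{(k)}$ from \cite{BD-derived-AJM} to identify the leading terms.
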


The rest of the section is devoted to the proof of Proposition~\ref{prop:2.23}, for which we shall suitably adapt the arguments in \cite[\S{2.5}]{BD-derived-AJM}. 


%

Define
\begin{equation}\label{eq:Tate-equiv}
\langle\;,\;\rangle_{K_n/K,m}:\bigoplus_{q\in\Sigma}{\rm H}^1(K_{n,q},E_{p^m})\times\bigoplus_{q\in\Sigma}{\rm H}^1(K_{n,q},E_{p^m})\rightarrow(\bZ/p^m\bZ)[\Gamma_n] 
\end{equation}
by the rule 
\[
\langle x,y\rangle_n:=\sum_{\sigma\in\Gamma_n}\langle x,y^\sigma\rangle_{K_{n},m}\cdot\sigma^{-1},
\]
where $\langle\;,\;\rangle_{K_n,m}:\bigoplus_{q\in\Sigma}{\rm H}^1(K_{n,q},E_{p^m})\times\bigoplus_{q\in\Sigma}{\rm H}^1(K_{n,q},E_{p^m})\rightarrow\bZ/p^m\bZ$ is the natural extension of the local Tate pairing. 

\begin{lem}\label{lem:properties}
The pairing $\langle\;,\;\rangle_{K_n/K,m}$ is symmetric, non-degenerate, and Galois-equivariant, and the images of $\bigoplus_{q\in\Sigma}{\rm H}^1_{\rm fin}(K_{n,q},E_{p^m})$ and $\Sel_\pp^\Sigma(K_n,E_{p^m})$ are isotropic for this pairing. 
\end{lem}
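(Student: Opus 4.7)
The plan is to deduce all four assertions from the corresponding properties of the local Tate pairing on $H^1(K_{n,w},E_{p^m})$ at primes $w$ of $K_n$ above $q\in\Sigma$, together with standard global input (Poitou--Tate). Throughout, I will use that each $q\in\Sigma$, being inert in $K$ and satisfying $q\not\equiv\pm 1\pmod{p}$, splits completely in the anticyclotomic $p$-extension $K_n/K$, so that $\bigoplus_{q\in\Sigma}H^1(K_{n,q},E_{p^m})$ naturally acquires the structure of a $(\bZ/p^m\bZ)[\Gamma_n]$-module on which $\Gamma_n$ acts by permuting the primes of $K_n$ above each $q$.

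Symmetry and Galois equivariance will follow by direct manipulation of the defining sum, using that $\langle\,,\,\rangle_{K_n,m}$ is itself symmetric (the cup-product sign for degree-$1$ classes cancels the sign coming from the Weil pairing being alternating) and Galois-equivariant (the invariant map on $H^2(K_{n,w},\mu_{p^m})$ is canonical). For non-degeneracy, local Tate duality for $E_{p^m}$ gives a perfect pairing at each $w$, and assembling these pairings along the primes of $K_n$ over each $q$, with the $\Gamma_n$-twist built into the definition, produces a perfect $(\bZ/p^m\bZ)[\Gamma_n]$-valued pairing on the free modules of Lemma~\ref{lem:free}.

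Isotropy of $\bigoplus_{q\in\Sigma}\mathrm{H}^1_{\rm fin}(K_{n,q},E_{p^m})$ is the classical fact that, at a prime of good reduction with $q\nmid p$, the subspace $\mathrm{H}^1_{\rm fin}=\mathrm{H}^1_{\rm unr}$ is its own annihilator under local Tate duality; this self-orthogonality persists after summing over the primes of $K_n$ above $q$ and twisting by $\Gamma_n$.

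The main obstacle, and the only step that uses genuinely global input, is the isotropy of $\Sel_\pp^\Sigma(K_n,E_{p^m})$. For this I will invoke global reciprocity: for any two global classes $x,y\in\mathrm{H}^1(\mathfrak{G}_{K_n,S},E_{p^m})$,
\[
\sum_{v}\langle\mathrm{loc}_v(x),\mathrm{loc}_v(y^\sigma)\rangle_{K_{n,v},m}=0
\]
for each $\sigma\in\Gamma_n$, where the sum runs over all places of $K_n$. For $x,y\in\Sel_\pp^\Sigma(K_n,E_{p^m})$ the contributions at $v\notin\Sigma$ all vanish: at places in $S\smallsetminus(\Sigma\cup\{v\mid p\})$ both classes lie in the self-orthogonal $\mathrm{H}^1_{\rm fin}$, while at places above $p$ one uses that the local conditions defining $\Sel_\pp$ (strict at one of $\pp,\overline{\pp}$ and relaxed at the other) are exact annihilators of each other under local Tate duality, so that classes satisfying the $\Sel_\pp$ local condition pair trivially there. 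Hence the $\Sigma$-part of the sum also vanishes, and multiplying by $\sigma^{-1}$ and summing over $\sigma$ yields $\langle x,y\rangle_{K_n/K,m}=0$, as required. The argument is essentially the Poitou--Tate computation that produced the exact sequence~(\ref{eq:ES}) in Lemma~\ref{lem:free}; the only subtle point is verifying self-duality of the $\Sel_\pp$-structure at primes above $p$, which is standard given the ordinary filtration on $T_pE$ at $\pp$ and $\overline{\pp}$.
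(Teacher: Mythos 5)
Your strategy coincides with the paper's: the symmetry, non-degeneracy, Galois-equivariance and the isotropy of $\bigoplus_{q\in\Sigma}{\rm H}^1_{\rm fin}(K_{n,q},E_{p^m})$ are deduced from the corresponding properties of the local Tate pairing, and the isotropy of $\Sel_\pp^\Sigma(K_n,E_{p^m})$ from the global reciprocity law, by checking that the local contributions away from $\Sigma$ vanish. Your handling of the places outside $S$, of the places of $S$ prime to $p\Sigma$, and of $\overline{\pp}$ is fine, and matches what the paper leaves implicit.

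The step that does not hold up as written is your treatment of the places above $p$. You assert that the local conditions of $\Sel_\pp$ at $\pp$ and $\overline{\pp}$ ``are exact annihilators of each other under local Tate duality, so that classes satisfying the $\Sel_\pp$ local condition pair trivially there.'' That deduction establishes the wrong statement: mutual annihilation of the relaxed condition at $\pp$ and the strict condition at $\overline{\pp}$ is exactly what makes the contribution above $p$ vanish for the pairing of $\Sel_\pp^\Sigma$ against $\Sel_{\overline\pp}^\Sigma$, whereas the lemma claims isotropy of $\Sel_\pp^\Sigma$ for the \emph{self}-pairing. For $x,y$ both in $\Sel_\pp^\Sigma(K_n,E_{p^m})$ the term at $\overline{\pp}$ vanishes because both classes are strict there, but at $\pp$ both classes are unrestricted, and the full group ${\rm H}^1(K_{n,\pp},E_{p^m})$ is certainly not isotropic for the perfect local Tate pairing; so the term $\langle {\rm loc}_\pp(x),{\rm loc}_\pp(y^\sigma)\rangle_{K_{n,\pp},m}$ does not vanish for free, and your reciprocity computation only shows that $\langle x,y\rangle_{K_n/K,m}$ equals minus the sum of these terms over $\sigma$. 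Closing the gap requires a genuinely additional input — for instance, an identification of the images of $\Sel_\pp^\Sigma$ and $\Sel_{\overline\pp}^\Sigma$ in $\bigoplus_{q\in\Sigma}{\rm H}^1(K_{n,q},E_{p^m})$ in the spirit of Lemma~\ref{lem:relation}, or a direct argument that the localization of $\Sel_\pp^\Sigma$ at $\pp$ lands in an isotropic subspace — and your final sentence, which defers this to a ``standard'' self-duality of the $\Sel_\pp$-structure at $p$, is precisely the point that is not standard for this (non-self-dual at each individual place above $p$) Selmer structure.
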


\begin{proof}
All the claims except the last one follow from the corresponding properties of the local Tate pairing, while the isotropy of $\Sel_\pp^\Sigma(K_n,E_{p^m})$ follows from the global reciprocity law of class field theory.
\end{proof}

In what follows, we take $m=n$, and set 
\[
R_n:=(\Z/p^n\Z)[\Gamma_n],\quad\quad \langle\;,\;\rangle_n:=\langle\;,\;\rangle_{K_n/K,n} 
\]
for ease of notation. 

As shown in the proof of Lemma~\ref{lem:free}, the natural map $\Sel_\pp(K_n,E_{p^n})\rightarrow\bigoplus_{q\in\Sigma}{\rm H}^1(K_{n,q},E_{p^n})$ is injective, and we can write
\begin{equation}\label{eq:intersection}
\Sel_{\pp}(K_n,E_{p^n})=\biggl(\bigoplus_{q\in\Sigma}{\rm H}^1_{\rm fin}(K_{n,q},E_{p^n})\biggr)\cap\Sel_{\pp}^\Sigma(K_n,E_{p^n}),
\end{equation}
with the modules in the intersection being each free $R_n$-modules of rank $\#\Sigma$. 
By Lemma~\ref{lem:properties}, $\langle\;,\;\rangle_{n}$ restricts to a non-degenerate pairing
\[
[\;,\;]_n:\bigoplus_{q\in\Sigma}{\rm H}^1_{\rm fin}(K_{n,q},E_{p^n})\times\bigoplus_{q\in\Sigma}{\rm H}^1_{\rm sing}(K_{n,q},E_{p^n})\rightarrow R_n,
\]
and with a slight abuse of notation we define 
\[
\langle\;,\;\rangle_n:\bigoplus_{q\in\Sigma}{\rm H}^1_{\rm fin}(K_{n,q},E_{p^n})\times\Sel_\pp^\Sigma(K_n,E_{p^n})\rightarrow R_n
\]
by $\langle x, y\rangle_n:=[x,\lambda(y)]_n$, 
where $\lambda$ is the natural map $\Sel_\pp^\Sigma(K_n,E_{p^n})\rightarrow\bigoplus_{q\in\Sigma}{\rm H}^1_{\rm sing}(K_{n,q},E_{p^n})$.

\begin{lem}\label{lem:fitting}
Let $\mu_n:\Lambda\rightarrow R_n$ be the map induced by the projection $\Gamma_\infty\rightarrow\Gamma_n$. Then
\[
\mu_n(F_\pp)={\rm Fitt}_{R_n}(\Sel_\pp(K_n,E_{p^n})^\vee)
=\det(\langle x_i,y_j\rangle_n)_{1\leqslant i,j\leqslant\#\Sigma},
\]
where $x_1,\dots,x_{\#\Sigma}$ and $y_1,\dots,y_{\#\Sigma}$ are any $R_n$-bases for $\bigoplus_{q\in\Sigma}{\rm H}^1_{\rm fin}(K_{n,q},E_{p^n})$ and $\Sel_{\pp}^\Sigma(K_n,E_{p^n})$, respectively.
\end{lem}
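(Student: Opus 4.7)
The plan is to handle the two equalities separately, tackling the more concrete one first. The second equality $\det(\langle x_i,y_j\rangle_n)_{1\leqslant i,j\leqslant\#\Sigma}={\rm Fitt}_{R_n}(\Sel_\pp(K_n,E_{p^n})^\vee)$ should follow directly from the exact sequence $(\ref{eq:ES})$ of Lemma~\ref{lem:free}, which identifies $\Sel_\pp(K_n,E_{p^n})^\vee$ with the cokernel of
\[
\lambda:\Sel_\pp^\Sigma(K_n,E_{p^n})\longrightarrow\bigoplus_{q\in\Sigma}{\rm H}^1_{\rm sing}(K_{n,q},E_{p^n}).
\]
Since, again by Lemma~\ref{lem:free}, both source and target of $\lambda$ are free $R_n$-modules of rank $\#\Sigma$, the Fitting ideal of $\operatorname{coker}(\lambda)$ is the determinant ideal of the matrix representing $\lambda$ in any pair of $R_n$-bases. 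To express this matrix via $\langle\;,\;\rangle_n$, I would use the non-degeneracy of the restricted local Tate pairing $[\;,\;]_n$ from Lemma~\ref{lem:properties} to build an $R_n$-basis $\{x_i^*\}$ of $\bigoplus_{q\in\Sigma}{\rm H}^1_{\rm sing}(K_{n,q},E_{p^n})$ dual to $\{x_i\}$. Writing $\lambda(y_j)=\sum_i a_{ij}\,x_i^*$ and pairing against $x_i$ gives $a_{ij}=[x_i,\lambda(y_j)]_n=\langle x_i,y_j\rangle_n$, so $\det(\lambda)=\det(\langle x_i,y_j\rangle_n)$, as required.

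For the first equality $\mu_n(F_\pp)={\rm Fitt}_{R_n}(\Sel_\pp(K_n,E_{p^n})^\vee)$, I would pass to an infinite-level version of the same exact sequence. Namely, one forms $\Sel_\pp^\Sigma(K_\infty,E_{p^\infty})$ and uses Poitou--Tate duality over $K_\infty$ to obtain a four-term exact sequence whose Pontryagin dual exhibits $X_\pp$ as the cokernel of a map $\lambda_\infty$ between two free $\Lambda$-modules of the same rank (the freeness of the relaxed Selmer side and of $\bigoplus_q {\rm H}^1_{\rm sing}(K_{\infty,q},E_{p^\infty})^\vee$ should follow from the argument in Lemma~\ref{lem:free} carried out in the limit, using that each $q\in\Sigma$ is inert in $K$ but splits completely in $K_\infty/K$). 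This makes ${\rm Fitt}_\Lambda(X_\pp)$ principal and equal to $(F_\pp)$, and since Fitting ideals of finitely presented modules commute with arbitrary base change, ${\rm Fitt}_{R_n}(X_\pp\otimes_\Lambda R_n)=\mu_n(F_\pp)$. A Mazur-style control theorem, using the surjectivity of the local norm maps $(\ref{eq:norm})$ at primes above $p$ (guaranteed by Hypotheses~\ref{hyp:running}(5)--(6)) together with the splitting of $\Sigma$-primes in $K_\infty/K$, then identifies $X_\pp\otimes_\Lambda R_n$ with $\Sel_\pp(K_n,E_{p^n})^\vee$.

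The main obstacle is the last identification: the control theorem needs to be set up with $p^n$-torsion coefficients and with the $\Sigma$-relaxed local conditions in a way that is compatible with the formation of the cokernel of $\lambda_\infty$, so that taking $\Gamma_n$-coinvariants and reducing mod $p^n$ on the infinite-level exact sequence yields exactly the sequence $(\ref{eq:ES})$. This requires that $H^0(K_{n,v},E_{p^\infty})$ vanishes for primes $v$ above $p$ (which reduces via the Weil pairing to Hypothesis~\ref{hyp:running}(6), as $a_p(E)\not\equiv 1\pmod p$), that the isomorphism $(\ref{eq:res-iso})$ established inside the proof of Lemma~\ref{lem:free} extends to the $p^\infty$-Selmer groups, and that the local terms $\bigoplus_{q\in\Sigma}{\rm H}^1_{\rm sing}(K_{\infty,q},E_{p^\infty})$ compute correctly modulo $(\gamma_n-1,p^n)$ — all standard but requiring care. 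Once this is in place, the two equalities fit together: the matrix of $\lambda_\infty$ reduces mod $\mu_n$ to the matrix $(\langle x_i,y_j\rangle_n)$, simultaneously giving $\mu_n(F_\pp)=\det(\langle x_i,y_j\rangle_n)$ and the Fitting-ideal identity.
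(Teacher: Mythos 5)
Your proposal is correct and follows essentially the same route as the paper: the second equality comes from the square presentation of $\Sel_\pp(K_n,E_{p^n})^\vee$ furnished by the exact sequence $(\ref{eq:ES})$ of Lemma~\ref{lem:free}, with matrix entries computed via the local Tate pairing, and the first comes from the control isomorphism $X_\pp/(\gamma_n-1,p^n)X_\pp\simeq\Sel_\pp(K_n,E_{p^n})^\vee$ together with base change of Fitting ideals. The only difference is that you make explicit (via an infinite-level Poitou--Tate presentation) why ${\rm Fitt}_\Lambda(X_\pp)$ is principal and generated by $F_\pp$, a point the paper delegates to ``standard properties of Fitting ideals'' and the cited lemmas of Bertolini--Darmon.
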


\begin{proof}
Letting $\gamma_n\in\Gamma_n$ be a generator, the first equality follows from the natural isomorphism
\[
X_\pp/(\gamma_n-1,p^n)X_\pp\simeq\Sel_\pp(K_n,E_{p^n})^\vee
\]
together with standard properties of Fitting ideals, and the second equality follows from the fact that by Lemma~\ref{lem:free} we have a presentation
\[
R_n^{\#\Sigma}\xrightarrow{M}R_n^{\#\Sigma}\rightarrow\Sel_\pp(K_n,E_{p^n})^\vee\rightarrow 0
\]
with $M$ given by a 
matrix with entries
 $m_{i,j}=[x_i,\lambda(y_j)]_n=\langle x_i,y_j\rangle_n$ (see \cite[Lem.~2.25 and Lem.~2.26]{BD-derived-AJM} for details).
\end{proof}

Recall the filtration $\Sel_\pp(K,T)=\fSar_\pp^{(1)}\supset\fSar_\pp^{(2)}\supset\cdots\supset\fSar_\pp^{(p)}$ in $(\ref{eq:filp})$.  
Letting $\bar{\fSar}_{\pp,n}^{(k)}$ be the natural image of $\fSar_{\pp}^{(k)}$ in $\Sel_\pp(K,E_{p^n})$ we obtain a filtration
\begin{equation}\label{eq:filp-n}
\Sel_\pp(K,E_{p^n})\supset\bar{\fSar}_{\pp,n}^{(1)}\supset\bar{\fSar}_{\pp,n}^{(2)}\supset\cdots\supset\bar{\fSar}_{\pp,n}^{(p)}
\end{equation}
with $\bar{\fSar}_{\pp,n}^{(k)}/\bar{\fSar}_{\pp,n}^{(k+1)}\simeq(\bZ/p^n\bZ)^{e_k}$, for $1\leqslant k\leqslant p-1$,  and $\bar{\fSar}_{\pp,n}^{(p)}\simeq(\bZ/p^n\bZ)^{d_p}$ for $d_p={\rm rank}_{\bZ_p}\fSar_\pp^{(p)}$. 

From $(\ref{eq:intersection})$ (using that $(\ref{eq:res-iso})$ is an isomorphism), we see that 
\begin{equation}\label{eq:int-K}
\Sel_\pp(K,E_{p^n})=\biggl(\bigoplus_{q\in\Sigma}{\rm H}^1_{\rm fin}(K_{q},E_{p^n})\biggr)\cap\Sel_{\pp}^\Sigma(K,E_{p^n})
\end{equation}
with the modules in the intersection being free over $\bZ/p^n\bZ$ of rank $\#\Sigma$. 

Let $\bar{x}_1,\dots,\bar{x}_{\#\Sigma}$ and $\bar{y}_1,\dots,\bar{y}_{\#\Sigma}$ be $\bZ/p^n\bZ$-bases for $\bigoplus_{q\in\Sigma}{\rm H}^1_{\rm fin}(K_{q},E_{p^n})$ and $\Sel_{\pp}^\Sigma(K,E_{p^n})$, respectively, which are adapted to the filtration $(\ref{eq:filp-n})$, meaning that the first $r$ vectors $\bar{x}_1,\dots,\bar{x}_r$ are a basis for $\bar{\fSar}_{\pp,n}^{(1)}\subset\Sel_\pp(K,E_{p^n})$ with the images of $\bar{x}_{h_k},\dots\bar{x}_{h_k+e_k}$ in $\bar{\fSar}_{\pp,n}^{(k)}/\bar{\fSar}_{\pp,n}^{(k+1)}$ giving a basis for $\bar{\fSar}_{\pp,n}^{(k)}/\bar{\fSar}_{\pp,n}^{(k+1)}$ ($1\leqslant k\leqslant p-1)$ and $\bar{x}_{h_p},\dots,\bar{x}_{h_p+d_p}$ a basis for $\bar{\fSar}_{\pp,n}^{(p)}$, and similarly for $\bar{y}_1,\dots,\bar{y}_{\#\Sigma}$. On the other hand, let $x_1',\dots, x'_{\#\Sigma}$ and $y_1',\dots,y'_{\#\Sigma}$ be any $R_n$-bases for $\bigoplus_{q\in\Sigma}{\rm H}^1_{\rm fin}(K_{n,q},E_{p^n})$ and $\Sel_{\pp}^\Sigma(K_n,E_{p^n})$, respectively, and set 
\[
\bar{x}_i':={\rm cor}_{K_n/K}(x'_i),\quad \bar{y}_i':={\rm cor}_{K_n/K}(y'_i). 
\]
Then there exist matrices $\bar{M}$ and $\bar{N}$ in ${\rm GL}_{\#\Sigma}(\bZ/p^n\bZ)$ taking $(\bar{x}_1',\dots,\bar{x}_{\#\Sigma}')\mapsto(\bar{x}_1,\dots,\bar{x}_{\#\Sigma})$ and $(\bar{y}_1',\dots,\bar{y}_{\#\Sigma}')\mapsto(\bar{y}_1,\dots,\bar{y}_{\#\Sigma})$, respectively,  and letting $M, N\in{\rm GL}_{\#\Sigma}(R_n)$ be any lifts of $\bar{M}, \bar{N}$ under the map
${\rm GL}_{\#\Sigma}(R_n)\rightarrow{\rm GL}_{\#\Sigma}(\bZ/p^n\bZ)$ induced by the augmentation 
\[
\epsilon:R_n\rightarrow\bZ/p^n\bZ,
\]
the images of $(x_1',\dots,x'_{\#\Sigma})$, $(y_1',\dots,y'_{\#\Sigma})$ under $M$, $N$ are $R_n$-bases $(x_1,\dots,x_{\#\Sigma})$, $(y_1,\dots,y_{\#\Sigma})$ satisfying
\[
{\rm cor}_{K_n/K}(x_i)=\bar{x}_i,\quad {\rm cor}_{K_n/K}(y_i)=\bar{y}_i. 
\]

\begin{lem}\label{lem:2.27}
With the above choice of $R_n$-bases $x_1,\dots,x_{\#\Sigma}$ and $y_1,\dots,y_{\#\Sigma}$, we have
\[
\epsilon(\det(\langle x_i,y_j\rangle_n)_{r+1\leqslant i,j\leqslant\#\Sigma})=u\cdot\#(\Sel_\pp(K,E_{p^\infty})_{/{\rm div}})
\]
for some $u\in(\bZ/p^n\bZ)^\times$.
\end{lem}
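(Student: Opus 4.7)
The plan is to reduce the $R_n$-valued pairing $\langle\cdot,\cdot\rangle_n$ to a $\bZ/p^n\bZ$-valued pairing at level $K$ via the augmentation $\epsilon$, use global duality to show that this reduced matrix has vanishing first $r$ rows and first $r$ columns, and then evaluate the residual $(\#\Sigma-r)\times(\#\Sigma-r)$ block via the Poitou--Tate sequence $(\ref{eq:ES})$ at level $K$. This closely parallels the computation carried out in the cyclotomic setting in \cite[\S2.5]{BD-derived-AJM}.

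First I would establish the identity
\[
\epsilon(\langle x_i,y_j\rangle_n)=\langle\bar x_i,\bar y_j\rangle_K,
\]
where the right-hand side denotes the analogous $\bZ/p^n\bZ$-valued pairing $[\bar x_i,\lambda(\bar y_j)]_K$ at level $K$. Indeed, applying $\epsilon$ to $\sum_{\sigma\in\Gamma_n}\langle x_i,y_j^\sigma\rangle_{K_n,n}\sigma^{-1}$ yields $\langle x_i,Ny_j\rangle_{K_n,n}$ with $N=\sum_{\sigma\in\Gamma_n}\sigma$, and the projection formula $Ny_j={\rm res}_{K_n/K}\bigl({\rm cor}_{K_n/K}(y_j)\bigr)={\rm res}_{K_n/K}(\bar y_j)$ combined with the adjointness $\langle a,{\rm res}(b)\rangle_{K_n,n}=\langle{\rm cor}(a),b\rangle_{K,n}$ of local Tate pairings under ${\rm res}$ and ${\rm cor}$ delivers the claimed identification.

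Next I would verify that both the first $r$ columns and the first $r$ rows of the augmented matrix $(\langle\bar x_i,\bar y_j\rangle_K)_{i,j}$ vanish. For columns $j\leqslant r$: by the adapted-basis condition $\bar y_j$ lies in $\Sel_\pp(K,E_{p^n})$, hence $\lambda(\bar y_j)=0$ and the $j$-th column is zero. For rows $i\leqslant r$: the class $\bar x_i$ is the local image at $\Sigma$ of a global class in $\Sel_\pp(K,E_{p^n})$, so the total pairing $\sum_{v\in S}\langle\tilde x_{i,v},\bar y_{j,v}\rangle_{K_v,n}$ vanishes by the global reciprocity law of class field theory, while the contributions from $v\in S\smallsetminus\Sigma$ already vanish since the Selmer local conditions defining $\Sel_\pp$ and $\Sel_\pp^\Sigma$ pair orthogonally there under local Tate duality; hence the remaining sum $\langle\bar x_i,\bar y_j\rangle_K=\sum_{q\in\Sigma}\langle\bar x_{i,q},\bar y_{j,q}\rangle_{K_q,n}$ is also zero. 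Thus $\epsilon$ of the full matrix has all entries outside the $(\#\Sigma-r)\times(\#\Sigma-r)$ lower-right block equal to zero.

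It remains to evaluate this surviving block. Put $A=\bigoplus_{q\in\Sigma}{\rm H}^1_{\rm fin}(K_q,E_{p^n})$ and $A^\ast=\bigoplus_{q\in\Sigma}{\rm H}^1_{\rm sing}(K_q,E_{p^n})$, both free of rank $\#\Sigma$ over $\bZ/p^n\bZ$ and in perfect Tate duality $A\times A^\ast\to\bZ/p^n\bZ$ by Lemma~\ref{lem:free}, and let $S\subset A$ be the image of $\Sel_\pp(K,E_{p^n})$ under the injection $(\ref{eq:int-K})$. The exact sequence $(\ref{eq:ES})$ at level $K$, interpreted via Poitou--Tate, identifies $\lambda\bigl(\Sel_\pp^\Sigma(K,E_{p^n})\bigr)$ with the annihilator $S^\perp\subset A^\ast$ and the cokernel $A^\ast/S^\perp$ with $\Sel_\pp(K,E_{p^n})^\vee$. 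The residual submatrix is then the matrix of the induced perfect pairing $A/S\times S^\perp\to\bZ/p^n\bZ$ in the bases $\{\bar x_{r+1},\dots,\bar x_{\#\Sigma}\}$ and $\{\lambda(\bar y_{r+1}),\dots,\lambda(\bar y_{\#\Sigma})\}$, and its determinant is, up to a unit in $(\bZ/p^n\bZ)^\times$, the order of the torsion part of $\Sel_\pp(K,E_{p^n})^\vee$; for $n$ large enough that $p^n$ annihilates $\Sel_\pp(K,E_{p^\infty})_{/{\rm div}}$ this simplifies to $\#\Sel_\pp(K,E_{p^\infty})_{/{\rm div}}$. The main obstacle is the bookkeeping in this last step: one must arrange that the first $r$ vectors of both adapted bases genuinely absorb the contribution of the divisible $(\bQ_p/\bZ_p)^{r-1}$-component of $\Sel_\pp(K,E_{p^\infty})$, so that the surviving subdeterminant picks up precisely the cotorsion invariant without stray powers of $p^n$. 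This is exactly the role played by the filtration $(\ref{eq:filp-n})$ together with the assumption $\Sar_p^{(p)}=US_p(E/K)$ from Conjecture~\ref{conj:mazur-BD}.
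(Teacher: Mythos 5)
Your argument is correct and follows essentially the same route as the paper: the identity $\epsilon(\langle x_i,y_j\rangle_n)=\pm\langle\bar x_i,\bar y_j\rangle_{K,n}$ via corestriction-compatibility of the local Tate pairing, followed by the Poitou--Tate sequence $(\ref{eq:ES})$ at level $K$ to identify the surviving block's determinant with the order of the non-divisible part of the Selmer group (the paper phrases this last step by explicitly diagonalizing $\lambda(\Sel_\pp^\Sigma(K,E_{p^n}))\simeq p^{s_1}(\bZ/p^n\bZ)\oplus\cdots\oplus p^{s_k}(\bZ/p^n\bZ)\oplus(\bZ/p^n\bZ)^{\#\Sigma-r-k}$, whereas you compute the Fitting ideal of $A^\ast/S^\perp\simeq\Sel_\pp(K,E_{p^n})^\vee$ --- the same computation). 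One small wording caveat: the induced pairing $A/S\times S^\perp\to\bZ/p^n\bZ$ is \emph{not} perfect precisely when the cotorsion is nontrivial (otherwise the determinant would be a unit), but your subsequent identification of the determinant with that defect shows you are using it correctly.
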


\begin{proof}
Write
\[
\Sel_\pp(K,E_{p^\infty})_{/{\rm div}}\simeq\bZ/p^{s_1}\bZ\oplus\cdots
\oplus\bZ/p^{s_k}\bZ
\]
Taking $n$ from the outset to be sufficiently large, we may assume that $s_i<n$ for all $i$. Denote by $\mathfrak{X}_\pp^\Sigma(K,E)_{p^n}$ the image of $\Sel_\pp^\Sigma(K,E_{p^n})$ under the natural map 
\[
{\rm H}^1(K,E_{p^n})\rightarrow{\rm H}^1(K,E)_{p^n}.
\] 
Since the elements in $\bar{y}_1,\dots,\bar{y}_r$ are in $\Sel_\pp(K,E_{p^n})$ and $\bar{y}_1,\dots,\bar{y}_{\#\Sigma}$ is a basis for $\Sel_\pp^\Sigma(K,E_{p^n})$, we see that the natural surjection $\Sel^\Sigma_\pp(K,E_{p^n})\twoheadrightarrow\mathfrak{X}_{\pp}^\Sigma(K,E)_{p^n}$ identifies  $\mathfrak{X}_\pp^\Sigma(K,E)_{p^n}$ with the span of $\bar{y}_{r+1},\dots\bar{y}_{\#\Sigma}$  and we have an exact sequence
\[
0\rightarrow
\Sel_\pp(K,E_{p^n})\rightarrow\mathfrak{X}_\pp^\Sigma(K,E)_{p^n}\rightarrow\lambda(\Sel_\pp^\Sigma(K,E_{p^n}))\rightarrow 0.
\]
Thus we find that
\[
\lambda(\Sel_\pp^\Sigma(K,E_{p^n}))\simeq p^{s_1}(\bZ/p^n\bZ)\oplus\cdots\oplus p^{s_k}(\bZ/p^n\bZ)\oplus(\bZ/p^n\bZ)^{\#\Sigma-r-k},
\]
and choosing the basis elements $\bar{x}_{r+1},\dots,\bar{x}_{\#\Sigma}$ and $\bar{y}_{r+1},\dots,\bar{y}_{\#\Sigma}$ so that $\langle\bar{x}_i,\bar{y}_j\rangle_{K,n}=p^{s_i}\delta_{ij}$, 
the result follows using the relation
\[
\epsilon(\langle x_i,y_j\rangle_{n})=-\langle\bar{x}_i,\bar{y}_j\rangle_{K,n},
\]
which is 
immediate from the compatibility of the local Tate pairing with respect to corestriction (see \cite[Prop.~2.10]{BD-derived-DMJ}).
\end{proof}

Fix a generator $\gamma_n\in\Gamma_n$, and set
\begin{equation}\label{eq:Sel-p-n}
\fS_{\pp,n}^{(k)}:=\Sel_\pp(K,E_{p^n})\cap(\gamma_n-1)^{k-1}\Sel_\pp(K_n,E_{p^n}).\nonumber
\end{equation}
Then by definition\footnote{Since our hypotheses imply that $\Sel_\pp(K,T)$ is free; in general $\fS_\pp^{(k)}$ is defined in \cite{BD-derived-AJM} as the $p$-adic saturation of $\varprojlim_n\fS_{\pp,n}^{(k)}$ in $\Sel_\pp(K,T)$.}
\[
\fSar_\pp^{(k)}:=\varprojlim_n\fS_{\pp,n}^{(k)},
\] 
where the limit is with respect to the natural maps induced by the multiplication $E_{p^{n+1}}\rightarrow E_{p^n}$, and we have $\bar{\fSar}_{\pp,n}^{(k)}\subset \fSar_{k,n}^{(k)}$. 
Let
$\widetilde{x}_{h_k+1},\dots,\widetilde{x}_{h_k+e_k};\widetilde{y}_{h_k+1},\dots,\widetilde{y}_{h_k+e_k}\in\Sel_\pp(K_n,E_{p^n})$ be such that 
\begin{equation}\label{eq:divide}
(\gamma_n-1)^{k-1}\widetilde{x}_{h_k+i}=\bar{x}_{h_k+i},\quad (\gamma_n-1)^{k-1}\widetilde{y}_{h_k+i}=\bar{y}_{h_k+i}.
\end{equation}

For $0\leqslant k\leqslant p$, let $D_n^{(k)}\in R_n$ be the derivative operator
\[
D_n^{(k)}:=(-1)^k\gamma_n^{-k}\sum_{i=0}^{p^n-1}\binom{i}{k}\gamma_n^i
\]
(so $D_n^{(0)}=\sum_{\gamma\in\Gamma_n}\gamma$ is the norm map) introduced in \cite[\S{3.1}]{darmon-PhD}. 

\begin{claim} For every $1\leqslant k\leqslant p$, there exist elements $x'_{h_k+1},\dots,x'_{h_k+e_k}\in\bigoplus_{q\in\Sigma}{\rm H}^1_{\rm fin}(K_{n,q},E_{p^n})$ and $y'_{h_k+1},\dots,y'_{h_k+e_k}\in\Sel_\pp^\Sigma(K_n,E_{p^n})$ satisfying
\begin{equation}\label{eq:der-p}
D_n^{(k-1)}(x'_{h_k+i})=\widetilde{x}_{h_k+i},\quad 
D_n^{(k-1)}(y'_{h_k+i})=\widetilde{y}_{h_k+i}.
\end{equation}
\end{claim}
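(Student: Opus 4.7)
The plan is to exploit the ring-theoretic identity
\[
(\gamma_n - 1)\cdot D_n^{(k)} = D_n^{(k-1)}\quad\textrm{in }R_n,\qquad 1\leq k\leq p-1,
\]
which follows from the identity $(\gamma_n-1)D_n^{(k)} = D_n^{(k-1)} + (-1)^k\binom{p^n}{k}\gamma_n^{-k}$ in $\bZ[\Gamma_n]$ together with the elementary fact that $p^n\mid\binom{p^n}{k}$ for $1\leq k\leq p-1$ (since $v_p(\binom{p^n}{k})=n-v_p(k)$). Iterating yields
\[
D_n^{(0)} = (\gamma_n-1)^{k-1}D_n^{(k-1)}\quad\textrm{in }R_n\quad\textrm{for every }1\leq k\leq p,
\]
while $(\gamma_n-1)D_n^{(0)}=0$ is automatic. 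Under our running hypotheses $\fSar_\pp^{(p)}=\{0\}$, so $k=p$ in the claim is vacuous; we keep it for uniformity.

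Since by Lemma~\ref{lem:free} both $M := \bigoplus_{q\in\Sigma}{\rm H}^1_{\rm fin}(K_{n,q},E_{p^n})$ and $\Sel_\pp^\Sigma(K_n,E_{p^n})$ are free $R_n$-modules of rank $\#\Sigma$, and the bases were arranged so that ${\rm cor}_{K_n/K}(x_i)=\bar{x}_i$ and ${\rm cor}_{K_n/K}(y_i)=\bar{y}_i$, i.e., $D_n^{(0)} x_{h_k+i}=\bar{x}_{h_k+i}$ and $D_n^{(0)} y_{h_k+i}=\bar{y}_{h_k+i}$ (recalling that corestriction acts as multiplication by the norm $D_n^{(0)}$ on free $R_n$-modules), the $x'$'s and $y'$'s can be constructed uniformly. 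The main conceptual step is to hit on the right inductive formulation: \emph{if $\mathcal{M}$ is any free $R_n$-module and $z,\widetilde{z}\in\mathcal{M}$ satisfy $(\gamma_n-1)^{k-1}\widetilde{z}=D_n^{(0)}z$, then there exists $z'\in\mathcal{M}$ with $D_n^{(k-1)}z'=\widetilde{z}$.} Applying this to $(\widetilde{z},z)=(\widetilde{x}_{h_k+i},x_{h_k+i})$ and to $(\widetilde{y}_{h_k+i},y_{h_k+i})$ yields the desired elements in $M$ and $\Sel_\pp^\Sigma(K_n,E_{p^n})$ respectively.

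We proceed by induction on $k$. The base case $k=1$ is immediate: $\widetilde{z}=D_n^{(0)}z$, so take $z':=z$. For $k\geq 2$, set $\widetilde{z}':=(\gamma_n-1)\widetilde{z}$, which satisfies $(\gamma_n-1)^{k-2}\widetilde{z}'=(\gamma_n-1)^{k-1}\widetilde{z}=D_n^{(0)}z$, and apply the inductive hypothesis at level $k-1$ to obtain $z^*\in\mathcal{M}$ with $D_n^{(k-2)}z^*=\widetilde{z}'$. Using $D_n^{(k-2)}=(\gamma_n-1)D_n^{(k-1)}$, this gives $(\gamma_n-1)\bigl[D_n^{(k-1)}z^*-\widetilde{z}\bigr]=0$, so $D_n^{(k-1)}z^*-\widetilde{z}$ lies in $\ker(\gamma_n-1)=D_n^{(0)}\mathcal{M}$ (the standard fact that the $\Gamma_n$-invariants of a free $R_n$-module equal the image of the norm). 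Writing this difference as $D_n^{(0)}\mu$ for some $\mu\in\mathcal{M}$ and substituting $D_n^{(0)}=(\gamma_n-1)^{k-1}D_n^{(k-1)}$, we obtain
\[
\widetilde{z}=D_n^{(k-1)}\bigl[z^*-(\gamma_n-1)^{k-1}\mu\bigr],
\]
so $z':=z^*-(\gamma_n-1)^{k-1}\mu$ does the job. Once the inductive formulation is in place, the argument is entirely formal; the only genuine technical input is the binomial identity in $R_n$ at the outset, whose range of validity $1\leq k\leq p$ matches the range of the claim.
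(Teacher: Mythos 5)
Your argument is correct, and it follows the same overall reduction as the paper: everything is transported into the free $R_n$-modules $\bigoplus_{q\in\Sigma}{\rm H}^1_{\rm fin}(K_{n,q},E_{p^n})$ and $\Sel_\pp^\Sigma(K_n,E_{p^n})$ of Lemma~\ref{lem:free}, where $\bar{x}_i=D_n^{(0)}x_i$ and $\bar{y}_i=D_n^{(0)}y_i$, and one then solves $D_n^{(k-1)}z'=\widetilde{z}$ there. The difference is that the paper treats the key algebraic input as a black box, citing \cite[Cor.~2.4]{BD-derived-AJM}, whereas you prove it: the identity $(\gamma_n-1)D_n^{(k)}=D_n^{(k-1)}+(-1)^k\binom{p^n}{k}\gamma_n^{-k}$, the vanishing of $\binom{p^n}{k}$ in $\bZ/p^n\bZ$ for $1\leqslant k\leqslant p-1$, and the induction using $\ker(\gamma_n-1)=D_n^{(0)}\mathcal{M}$ on free $R_n$-modules are all verified correctly, and the range $1\leqslant k\leqslant p$ of the claim is exactly what the binomial identity permits. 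This makes your write-up self-contained where the paper is not, at the cost of a page of (routine but genuine) group-ring algebra. The one point the paper is more explicit about, and which you handle only implicitly, is that the relation $(\gamma_n-1)^{k-1}\widetilde{x}_{h_k+i}=\bar{x}_{h_k+i}$ a priori lives in $\Sel_\pp(K_n,E_{p^n})$, which is not itself free; one needs the injectivity of the restriction map $(\ref{eq:res-sigma})$ (established in the proof of Lemma~\ref{lem:free}) to read this relation inside $\bigoplus_{q\in\Sigma}{\rm H}^1_{\rm fin}(K_{n,q},E_{p^n})$ before applying your divisibility lemma there. This is a presentational omission rather than a gap, since the required injection is already available.
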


To see this, note that by $(\ref{eq:int-K})$ and the definition of $n$-admissible set we may view the $\bar{x}_{e_k+i}$ as elements in $\bigoplus_{q\in\Sigma}{\rm H}^1_{\rm fin}(K_q,E_{p^n})=(\bigoplus_{q\in\Sigma}{\rm H}^1_{\rm fin}(K_{n,q},E_{p^n}))^{\Gamma_n}$ and by  injectivity of the restriction map $(\ref{eq:res-sigma})$, 
the equality in $(\ref{eq:divide})$ may be seen as taking place in $\bigoplus_{q\in\Sigma}{\rm H}^1_{\rm fin}(K_{n,q},E_{p^n})$. Hence by \cite[Cor.~2.4]{BD-derived-AJM} applied to $\bigoplus_{q\in\Sigma}{\rm H}^1_{\rm fin}(K_{n,q},E_{p^n})$ (which is free over $R_n$ by Lemma~\ref{lem:free}), the existence of elements $x_{h_k+i}'$ satisfying (\ref{eq:der-p}) follows. The existence of elements $y_{h_k+i}'$ satisfying (\ref{eq:der-p}) is seen similarly, viewing $(\ref{eq:divide})$ as taking place in $\Sel_\pp^\Sigma(K_n,E_{p^n})$. 

By (\ref{eq:divide}), the resulting elements $x_1',\dots,x_r'$ and $y_1',\dots,y_r'$ are $R_n$-linearly independent, and setting $x_i':=x_i$ and $y_i':=y_i$ for $r+1\leqslant i\leqslant\#\Sigma$ an argument similar to that preceding Lemma~\ref{lem:2.27} shows that, after possibly transforming the bases $x_1',\dots,x_{\#\Sigma}'$ and $y_1',\dots,y_{\#\Sigma}'$ by matrices in the kernel of the map ${\rm GL}_{\#\Sigma}(R_n)\xrightarrow{}{\rm GL}_{\#\Sigma}(\bZ/p^n\bZ)$ induced by the augmentation, 
we may assume
\begin{equation}\label{eq:A-bar}
{\rm cor}_{K_n/K}(x'_{i})=\bar{x}_{i},\quad{\rm cor}_{K_n/K}(y'_{i})=\bar{y}_{i}
\end{equation}
for all $i$.

We can now conclude the proof of Proposition~\ref{prop:2.23}.

\begin{proof}[Proof of Proposition~\ref{prop:2.23}]
Let $\sigma_p:=e_1+2e_2+\cdots+(p-1)e_{p-1}+d_p$.  
(Recall that $e_k$ are given by $(\ref{eq:e_k-p})$, and $d_p:={\rm rank}_{\bZ_p}\fS_\pp^{(p)}$, which is expected to be zero.)  
To prove the inequality $(\ref{eq:ineq-p})$,  
it is enough to show the inclusion 
\begin{equation}\label{eq:incl-p}
{\rm Fitt}_{R_n}(\Sel_\pp(K_n,E_{p^n})^\vee)\in J_n^{\sigma_p}
\end{equation}
for all $n\geqslant 1$, where $J_n$ is the augmentation ideal of $R_n$. Indeed, this implies that ${\rm ord}_J F_\pp(f)\geqslant\sigma_p$, and by Remark~\ref{rem:isotropic} we have
\[
\sigma_p=\sum_{k=1}^p{\rm rank}_{\bZ_p}\fS_\pp^{(k)}\geqslant (r-1)+(\vert r^+-r^-\vert-1)=2(\max\{r^+,r^-\}-1).
\]

As noted earlier, we may choose $n$-admissible sets $\Sigma=\Sigma_n$ with $\#\Sigma$ independent of $n$, and we assume now that the preceding constructions of bases have been carried out with such $\Sigma$.

The Galois-equivariance property of $\langle\;,\;\rangle_n$ together with $(\ref{eq:der-p})$ imply that for all $1\leqslant i\leqslant e_k$ and $y\in\Sel_\pp^\Sigma(K_n,E_{p^n})$ we have
\[
D_n^{(k-1)}(\langle x'_{h_k+i},y\rangle_n)=\langle D_n^{(k-1)}(x_{h_k+i}'),y\rangle_n=0,
\] 
using Lemma~\ref{lem:properties} for the second equality. By \cite[Cor.~2.5]{BD-derived-AJM}, it follows that $\langle x'_{h_k+i},y\rangle_n\in J_n^{k}$. 
Since $(\ref{eq:A-bar})$ readily implies the equality
\begin{equation}\label{eq:conv-change}
\det(\langle x_i',y_j'\rangle_n)_{1\leqslant i,j\leqslant\#\Sigma}
=
\det(\langle x_i,y_j\rangle_n)_{1\leqslant i,j\leqslant\#\Sigma},
\end{equation}
and by Lemma~\ref{lem:2.27} we have
\begin{equation}\label{eq:cc-2}
\det(\langle x_i,y_j\rangle_n)_{1\leqslant i,j\leqslant\#\Sigma}=
{\rm Fitt}_{R_n}(\Sel_\pp(K_n,E_{p^n})^\vee),
\end{equation}
the inclusion (\ref{eq:incl-p}) follows.

Finally, to prove the expression in Proposition~\ref{prop:2.23} for the image of $F_\pp(f)$ in $J^{\rho}/J^{\rho+1}$, where $\rho=\max\{r^+,r^-\}-1$, we may assume that $\rho=\sigma_p$ (otherwise the result is trivial, both terms in the formula being equal to zero). Then by Lemma~\ref{lem:properties}, $(\ref{eq:conv-change})$, and Lemma~\ref{lem:2.27} we get 
\begin{equation}\label{eq:cc-3}
\det(\langle x_i',y_j'\rangle_n)_{1\leqslant i,j\leqslant\#\Sigma}=
\det(\langle x_i',y_j'\rangle_n)_{1\leqslant i,j\leqslant r}\cdot
u_n\cdot\#(\Sel_\pp(K,E_{p^\infty})_{/{\rm div}})\in J_n^{\rho},
\end{equation}
for some unit $u_n\in(\bZ/p^n\bZ)^\times$. Since by (\ref{eq:divide}), (\ref{eq:der-p}), and the definition of the derived pairing $h_{\pp,n}^{(k)}$ (see \cite[p.~1526]{BD-derived-AJM}) we have
\begin{equation}\label{eq:cc-4}
h_{\pp,n}^{(k)}(\bar{x}_i,\bar{y}_j)=\langle x'_i,y'_j\rangle_n\in J_n^{k}/J_n^{k+1}
\end{equation}
for all $h_k+1\leqslant i,j\leqslant h_k+e_k$, 
combining Lemma~\ref{lem:fitting} with $(\ref{eq:conv-change})$, (\ref{eq:cc-2}), (\ref{eq:cc-3}), and (\ref{eq:cc-4}) we 
arrive at the equality
\[
\mu_n(F_\pp)=u_n\cdot 
\prod_{k=1}^{p-1}\det(h_{\pp,n}^{(k)}(\bar{x}_{i},\bar{y}_j))_{h_k+1\leqslant i,j\leqslant h_k+e_k}
\]
in $J_n^\rho/J_n^{\rho+1}$, and letting $n\to\infty$ the result follows.
\end{proof}

\subsection{Calculation of $\#(\Sel_\pp(K,E_{p^\infty})_{/{\rm div}})$}\label{subsec:calc-sel-div}

Define the $\bar\pp$-relaxed Tate--Shafarevich group 
by
\[
\Sha_{}^{\{\overline\pp\}}(E/K):=
{\rm ker}\biggl\{{\rm H}^1(K,E)\rightarrow\prod_{w\neq\bar\pp}{\rm H}^1(K_w,E)\biggr\},
\]
and let $\Sha_{}^{\{\overline\pp\}}(E/K)_{p^\infty}$ denote its $p$-primary component. Recall that by hypothesis (\ref{eq:Heeg}) the root number of $E/K$ is $-1$, so by the $p$-parity conjecture if $\Sha(E/K)_{p^\infty}$ is finite then $E(K)$ has positive rank.


\begin{lem}\label{lem:6.10}
Assume that 
$\Sha(E/K)_{p^\infty}$ is finite. Then $\Sha^{\{\ppbar\}}(E/K)_{p^\infty}$ is also finite, and we have
\[
\#\Sha_{}^{\{\ppbar\}}(E/K)_{p^\infty}=
\#\Sha(E/K)_{p^\infty}\cdot\#{{\rm coker}({\rm loc}_\pp)},
\]
where ${\rm loc}_\pp:S_p(E/K)\rightarrow E(K_\pp)\otimes\bZ_p$ is the restriction map.
\end{lem}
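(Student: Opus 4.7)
The plan is to set up a snake-lemma comparison of Selmer groups, then compute the connecting quotient via Poitou--Tate global duality, and finally transfer from $\ppbar$ to $\pp$ using complex conjugation.

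First, let $\Sel^{\{\ppbar\}}(K, E_{p^\infty})$ denote the classical Selmer group of $E_{p^\infty}/K$ with the local condition at $\ppbar$ relaxed (and the usual Kummer conditions elsewhere). The defining exact sequences
\begin{align*}
0 &\to E(K)\otimes\bQ_p/\bZ_p \to \Sel(K, E_{p^\infty}) \to \Sha(E/K)_{p^\infty} \to 0,\\
0 &\to E(K)\otimes\bQ_p/\bZ_p \to \Sel^{\{\ppbar\}}(K, E_{p^\infty}) \to \Sha^{\{\ppbar\}}(E/K)_{p^\infty} \to 0,
\end{align*}
together with the snake lemma applied to the natural inclusion of the first into the second, produce the isomorphism
\[
\Sha^{\{\ppbar\}}(E/K)_{p^\infty}/\Sha(E/K)_{p^\infty} \simeq \Sel^{\{\ppbar\}}(K, E_{p^\infty})/\Sel(K, E_{p^\infty}),
\]
so the claim reduces to computing the order of the right-hand quotient.

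Next, I would invoke the Poitou--Tate five-term exact sequence for the pair $(\Sel, \Sel^{\{\ppbar\}})$ on the $E_{p^\infty}$ side, paired with their duals on the $T$ side under local Tate duality. The dual of the classical Selmer for $E_{p^\infty}$ is $S_p(E/K)$, and relaxing at $\ppbar$ on the discrete side corresponds to imposing the trivial local condition at $\ppbar$ on the compact side. The five-term exact sequence then reads
\[
\cdots \to S_p(E/K) \xrightarrow{\mathrm{loc}_{\ppbar}} E(K_{\ppbar})\otimes\bZ_p \to \Sel^{\{\ppbar\}}(K, E_{p^\infty})^\vee \to \Sel(K, E_{p^\infty})^\vee \to 0,
\]
from which Pontryagin duality yields $\#\bigl(\Sel^{\{\ppbar\}}(K, E_{p^\infty})/\Sel(K, E_{p^\infty})\bigr) = \#\mathrm{coker}(\mathrm{loc}_{\ppbar})$. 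Finiteness of this cokernel is an immediate consequence of Lemma~\ref{lem:relation}: the image of $\mathrm{loc}_{\ppbar}$ has $\bZ_p$-rank one, matching the $\bZ_p$-rank of the target.

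Finally, complex conjugation $\tau \in \mathrm{Gal}(K/\bQ)$ interchanges $\pp$ and $\ppbar$ and acts compatibly on $S_p(E/K)$, producing an isomorphism $\tau\colon E(K_{\pp})\otimes\bZ_p \xrightarrow{\sim} E(K_{\ppbar})\otimes\bZ_p$ making the obvious square involving $\mathrm{loc}_\pp$ and $\mathrm{loc}_{\ppbar}$ commute. Hence $\#\mathrm{coker}(\mathrm{loc}_\pp) = \#\mathrm{coker}(\mathrm{loc}_{\ppbar})$, and combining everything yields the claimed order formula together with the finiteness of $\Sha^{\{\ppbar\}}(E/K)_{p^\infty}$. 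The one technical point to check carefully will be the Poitou--Tate step: namely, that the local conditions at the bad primes $\ell\vert N$ do not contribute extra terms (the Kummer subgroup is its own annihilator under the local Tate pairing at such places), so that the connecting map in the five-term sequence is exactly the localization of $S_p(E/K)$ at $\ppbar$ and nothing more.
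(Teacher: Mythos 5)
Your proof is correct and follows essentially the same route as the paper: both identify the quotient $\Sha^{\{\ppbar\}}(E/K)_{p^\infty}/\Sha(E/K)_{p^\infty}$ with the kernel of the connecting map in the Cassels--Poitou--Tate dual exact sequence, and hence with the Pontryagin dual of ${\rm coker}({\rm loc}_{\ppbar})$. The only cosmetic difference is that you pass from $\ppbar$ to $\pp$ by an explicit complex-conjugation argument at the end, whereas the paper absorbs this swap into the identification $E_{p^\infty}^*\simeq T^\tau$ in the duality step.
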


\begin{proof}
Define $B_\infty$ by the exactness of the sequence
\begin{equation}\label{eq:taut}
0\rightarrow\Sha(E/K)_{p^\infty}\rightarrow{\rm H}^1(K,E)_{p^\infty}\rightarrow\prod_{w}{\rm H}^1(K_w,E)_{p^\infty}\rightarrow B_\infty\rightarrow 0.\nonumber
\end{equation}
Then we have an induced exact sequence
\begin{equation}\label{eq:snake}
0\rightarrow\Sha(E/K)_{p^\infty}\rightarrow\Sha_{}^{\{\ppbar\}}(E/K)_{p^\infty}\rightarrow{\rm H}^1(K_{\ppbar},E)_{p^\infty}\xrightarrow{h_\infty} B_\infty.
\end{equation}
By surjectivity of the top right arrow in the commutative 
\[
\xymatrix{
0\ar[r] &E(K)\otimes\bQ_p/\bZ_p\ar[r]\ar[d]&{\rm H}^1(K,E_{p^\infty})\ar[r]\ar[d]\ar[rd]^-{\partial}&{\rm H}^1(K,E)_{p^\infty}\ar[d]\ar[r]&0\\
0\ar[r] &\prod_w E(K_w)\otimes\bQ_p/\bZ_p\ar[r] &\prod_w{\rm H}^1(K_w,E_{p^\infty})\ar[r] &\prod_w{\rm H}^1(K_w,E)_{p^\infty}\ar[r]&0,
}
\]
we see 
that ${\rm ker}(h_\infty)$ is the same as the kernel of the map $\delta$ in the Cassels dual exact sequence
\[
0\rightarrow{\rm Sel}_{p^\infty}(E/K)\rightarrow{\rm Sel}^{\{\ppbar\}}_{p^\infty}(E/K)\rightarrow{\rm H}^1(K_{\ppbar},E)_{p^\infty}\xrightarrow{\delta}S_p(E/K)^\vee,
\]
where ${\rm Sel}^{\{\ppbar\}}_{p^\infty}(E/K)$ is the kernel of the map ${\rm H}^1(K,E_{p^\infty})\rightarrow\prod_{w\nmid\ppbar}{\rm H}^1(K_w,E)_{p^\infty}$.
 
Using that $E_{p^\infty}^*:={\rm Hom}_{\rm cts}(E_{p^\infty},\mu_{p^\infty})\simeq T^\tau$ (which exchanges the restriction maps at $\pp$ and $\ppbar$) it follows that the kernel of $h_\infty$ is dual to the cokernel of the  map ${\rm loc}_{\pp}:S_p(E/K)\rightarrow E(K_{\pp})\otimes\bZ_p$, which is finite under our hypotheses. The result follows.
\end{proof}

The following result is an analogue of \cite[Prop.~3.2.1]{JSW} in arbitrary (co)rank.

\begin{prop}\label{prop:6.11}
Assume that 
$\Sha(E/K)_{p^\infty}$ is finite and $a_p(E)\not\equiv 1\pmod{p}$. Then
\[
\#(\Sel_\pp(K,E_{p^\infty})_{/{\rm div}})=\#\Sha(E/K)_{p^\infty}\cdot(\#{\rm coker}({\rm loc}_\pp))^2,
\]
where ${\rm loc}_\pp:S_p(E/K)\rightarrow E(K_\pp)\otimes\bZ_p$ is the restriction map.
\end{prop}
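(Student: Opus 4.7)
The plan is to construct a natural map $\Phi\colon\Sel_\pp(K,E_{p^\infty})\to\Sha^{\{\pp\}}(E/K)_{p^\infty}$ to the $\pp$-relaxed Tate--Shafarevich group, and to analyze its kernel and image separately. By the $\pp\leftrightarrow\ppbar$ analog of Lemma~\ref{lem:6.10} combined with the complex-conjugation symmetry $\#{\rm coker}({\rm loc}_\pp)=\#{\rm coker}({\rm loc}_\ppbar)$, the group $\Sha^{\{\pp\}}(E/K)_{p^\infty}$ has order $\#\Sha(E/K)_{p^\infty}\cdot\#{\rm coker}({\rm loc}_\pp)$. The map $\Phi$ is the composition of the inclusion $\Sel_\pp(K,E_{p^\infty})\hookrightarrow H^1(\mathfrak{G}_{K,S},E_{p^\infty})$ with the Kummer map $H^1(K,E_{p^\infty})\to H^1(K,E)_{p^\infty}$: a class $c\in\Sel_\pp(K,E_{p^\infty})$ is locally trivial at every $v\in S\setminus\{\pp\}$ by the defining strict local conditions, and unramified (hence trivial in $H^1(K_v,E)_{p^\infty}$) at every $v\notin S$, so $\Phi(c)\in\Sha^{\{\pp\}}(E/K)_{p^\infty}$.

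The key step is to prove $\Phi$ is surjective. Given $\bar c\in\Sha^{\{\pp\}}(E/K)_{p^\infty}$, lift to $c\in H^1(\mathfrak{G}_{K,S},E_{p^\infty})$; at each $v\in S\setminus\{\pp\}$, triviality of $\bar c_v$ forces $c_v$ into $E(K_v)\otimes\bQ_p/\bZ_p$. For bad $v\nmid p$ this group vanishes (since $E(K_v)\otimes\bZ_p$ is finite and a finite abelian $p$-group tensored with $\bQ_p/\bZ_p$ is zero). At $v=\ppbar$, Lemma~\ref{lem:relation} implies that ${\rm loc}_\ppbar\colon E(K)\otimes\bZ_p\to E(K_\ppbar)\otimes\bZ_p$ has image of $\bZ_p$-rank $1$, matching the rank of the target (which is free of rank $1$ thanks to the hypothesis $a_p(E)\not\equiv 1\pmod p$, ensuring $E(K_\ppbar)\otimes\bZ_p\simeq\bZ_p$); hence ${\rm loc}_\ppbar$ becomes surjective after $\otimes\bQ_p/\bZ_p$, and one can correct $c$ by a suitable point of $E(K)\otimes\bQ_p/\bZ_p$ to land in $\Sel_\pp(K,E_{p^\infty})$ while still mapping to $\bar c$.

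Finally I analyze $\ker\Phi=\Sel_\pp(K,E_{p^\infty})\cap(E(K)\otimes\bQ_p/\bZ_p)$, which by the same local analysis equals $\ker({\rm loc}_\ppbar\otimes\bQ_p/\bZ_p)$ on $E(K)\otimes\bQ_p/\bZ_p$. Tensoring with $\bQ_p/\bZ_p$ the two short exact sequences extracted from ${\rm loc}_\ppbar$ and using the standard isomorphism $\Tor_1^{\bZ_p}(M,\bQ_p/\bZ_p)\simeq M$ for finite $M$, $\ker\Phi$ fits in an extension
\[
0\to(\bQ_p/\bZ_p)^{r-1}\to\ker\Phi\to{\rm coker}({\rm loc}_\ppbar)\to 0,
\]
whose maximal divisible subgroup is $(\bQ_p/\bZ_p)^{r-1}$, giving $\#(\ker\Phi)_{/{\rm div}}=\#{\rm coker}({\rm loc}_\pp)$. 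Feeding this into $0\to\ker\Phi\to\Sel_\pp(K,E_{p^\infty})\to\Sha^{\{\pp\}}(E/K)_{p^\infty}\to 0$, whose rightmost term is finite (so divisible subgroups of $\ker\Phi$ and $\Sel_\pp(K,E_{p^\infty})$ agree and the cotorsion orders multiply), yields
\[
\#\Sel_\pp(K,E_{p^\infty})_{/{\rm div}}=\#(\ker\Phi)_{/{\rm div}}\cdot\#\Sha^{\{\pp\}}(E/K)_{p^\infty}=\#\Sha(E/K)_{p^\infty}\cdot(\#{\rm coker}({\rm loc}_\pp))^2,
\]
as desired. The principal technical obstacles are the lift-and-adjust step in the surjectivity proof and the $\Tor$ computation for $\ker\Phi$; the rest is bookkeeping built on Lemma~\ref{lem:6.10} and the $\tau$-symmetry between $\pp$ and $\ppbar$.
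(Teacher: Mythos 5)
Your argument is correct and is essentially the paper's own proof: the paper produces the same exact sequence $0\to\ker(\lambda)\to\Sel_\pp(K,E_{p^\infty})\to\Sha^{\{\cdot\}}(E/K)_{p^\infty}\to 0$ by a snake-lemma argument on the $p$-relaxed Selmer group (your lift-and-correct surjectivity step is that snake lemma unwound), computes the cotorsion of the kernel via the same ${\rm Tor}$ term (the finite module $V$ in the paper, of order $\#{\rm coker}({\rm loc}_\pp)$, using $a_p(E)\not\equiv 1\pmod p$ to make $E(K_\pp)\otimes\bZ_p$ free), and then invokes Lemma~\ref{lem:6.10} for the order of the relaxed Tate--Shafarevich group. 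The only difference is cosmetic: you carry out the local analysis at $\ppbar$ and appeal to the $\tau$-symmetry $\#{\rm coker}({\rm loc}_\pp)=\#{\rm coker}({\rm loc}_{\ppbar})$, whereas the paper works directly at $\pp$ (reflecting the convention for the local conditions on $\Sel_\pp(K,E_{p^\infty})$ that it uses in the proof), which does not affect the final count.
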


\begin{proof}
Let $y_1,\dots,y_{r-1}$ be a $\bZ_p$-basis for the kernel
\[
E_{1,\pp}(K):={\rm ker}\biggl\{E(K)\otimes\bZ_p\xrightarrow{{\rm loc}_\pp} E(K_\pp)\otimes\bZ_p\biggr\},
\]
and extend it to a $\bZ_p$-basis $y_1,\dots,y_{r-1},y_\pp$ for $E(K)\otimes\bZ_p$, so
\begin{equation}\label{eq:directsum}
E(K)\otimes\bZ_p=E_{1,\pp}(K)\oplus\bZ_p.y_\pp.
\end{equation}
Then the finite module $U$ defined by the exactness of the sequence
\begin{equation}\label{eq:U}
0\rightarrow\bZ_p.y_\pp\rightarrow E(K_\pp)\otimes\bZ_p\rightarrow U\rightarrow 0
\end{equation}
satisfies
\begin{equation}\label{eq:U}
\#U=[E(K_\pp)\otimes\bZ_p\colon{\rm loc}_\pp(E(K)\otimes\bZ_p)]=
\#{\rm coker}({\rm loc}_\pp),
\end{equation}
using the finiteness assumption on $\Sha(E/K)$ for the second equality. The hypothesis that $a_p(E)\not\equiv 1\pmod{p}$ implies that $E(K_\pp)$ has no $p$-torsion, and so $E(K_\pp)\otimes\bZ_p$ is a free $\bZ_p$-module of rank one. Tensoring (\ref{eq:U}) with $\bQ_p/\bZ_p$ therefore yields
\[
0\rightarrow V\rightarrow(\bQ_p/\bZ_p).y_\pp\rightarrow E(K_\pp)\otimes\bQ_p/\bZ_p\rightarrow 0
\]
with $\#V=
\#U$,  
and from $(\ref{eq:directsum})$ we deduce that
\begin{equation}\label{eq:ker-lam}
{\rm ker}\biggl\{E(K)\otimes\bQ_p/\bZ_p\xrightarrow{\lambda_\pp}E(K_\pp)\otimes\bQ_p/\bZ_p\biggr\}=(E_{1,\pp}(K)\otimes\bQ_p/\bZ_p)\oplus V.
\end{equation}

Now consider the $p$-relaxed Tate--Shafarevich group $\Sha^{\{p\}}(E/K)$ defined by
\[
\Sha^{\{p\}}(E/K):={\rm ker}\biggl\{{\rm H}^1(K,E)\rightarrow\prod_{w\nmid p}{\rm H}^1(K_w,E)\biggr\}.
\]
It is immediately seen that its $p$-primary part fits into the exact sequence
\[
0\rightarrow E(K)\otimes\bQ_p/\bZ_p\rightarrow {\rm Sel}_{p^\infty}^{\{p\}}(E/K)\rightarrow\Sha^{\{p\}}(E/K)_{p^\infty}\rightarrow 0,
\]
where ${\rm Sel}_{p^\infty}^{\{p\}}(E/K)$ is the kernel of the map ${\rm H}^1(K,E_{p^\infty})\rightarrow\prod_{w\nmid p}{\rm H}^1(K_w,E)_{p^\infty}$. 
Consider also the commutative diagram
\[
\xymatrix{
0 \ar[r] &E(K)\otimes\bQ_p/\bZ_p \ar[r]\ar[d]^-{\lambda_{\pp}} &{\rm Sel}_{p^\infty}^{\{p\}}(E/K)\ar[r]\ar[d] &\Sha^{\{p\}}(E/K)_{p^\infty}\ar[d]\ar[r] &0\\
0 \ar[r] &E(K_{\pp})\otimes\bQ_p/\bZ_p\ar[r] &{\rm H}^1(K_{\pp},E_{p^\infty})\ar[r] &{\rm H}^1(K_{\pp},E)_{p^\infty}\ar[r] &0
}
\]
in which the unlabeled vertical maps are given by restriction. Since the map $\lambda_{\pp}$ is surjective by our assumptions, the snake lemma applied to this diagram yields the exact sequence
\begin{equation}\label{eq:fund}
0\rightarrow{\rm ker}(\lambda_{\pp})\rightarrow\Sel_\pp(K,E_{p^\infty})\rightarrow\Sha_{}^{\{\ppbar\}}(E/K)_{p^\infty}\rightarrow 0,
\end{equation}
and hence from  $(\ref{eq:fund})$, $(\ref{eq:ker-lam})$ and $(\ref{eq:U})$ we conclude that
\begin{align*}
\#(\Sel_\pp(K,E_{p^\infty})_{/{\rm div}})=\#\Sha_{}^{\{\ppbar\}}(E/K)_{p^\infty}\cdot\#V&=\#\Sha_{}^{\{\ppbar\}}(E/K)_{p^\infty}\cdot\#{\rm coker}({\rm loc}_\pp)\\
&=\#\Sha(E/K)_{p^\infty}\cdot\#({\rm coker}({\rm loc}_\pp))^2,
\end{align*}
using Lemma~\ref{lem:6.10} for the last equality.
\end{proof}

As in the proof of Proposition~\ref{prop:6.11}, 
let $y_1,\dots, y_{r-1}$ be a $\bZ_p$-basis for the kernel ${\rm Sel}_{\rm str}(K,T)$ of
\[
{\rm loc}_\pp:E(K)\otimes\bZ_p\rightarrow E(K_\pp)\otimes\bZ_p,
\] 
and extend it to a $\bZ_p$-basis $y_1,\dots,y_{r-1},y_\pp$ for $E(K)\otimes\bZ_p$. We denote by ${\rm log}_{\omega_E}:E(K)\otimes\bZ_p\rightarrow\bZ_p$ the composition of ${\rm loc}_\pp$ with the formal group logarithm associated with a N\'{e}ron differential $\omega_E\in\Omega^1(E/\bZ_{(p)})$.

\begin{prop}\label{prop:coker}
Assume that 
$\Sha(E/K)_{p^\infty}$ is finite and $a_p(E)\not\equiv 1\pmod{p}$. 
Then 
\[
\#{\rm coker}({\rm loc}_\pp)=p^{-1}
\#(\bZ_p/\log_{\omega_E}(y_\pp)).
\]
\end{prop}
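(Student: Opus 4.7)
\medskip

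\textbf{Proof plan.} The idea is to reduce the computation of $\#\mathrm{coker}(\mathrm{loc}_\pp)$ to an index calculation inside $\bZ_p$ via the formal group logarithm. First, since $y_1,\dots,y_{r-1}$ span the kernel of $\mathrm{loc}_\pp$ and together with $y_\pp$ give a $\bZ_p$-basis of $E(K)\otimes\bZ_p$, the image of $\mathrm{loc}_\pp$ is precisely the cyclic module
\[
\mathrm{loc}_\pp(E(K)\otimes\bZ_p)=\bZ_p\cdot\mathrm{loc}_\pp(y_\pp)\subset E(K_\pp)\otimes\bZ_p.
\]
Thus $\#\mathrm{coker}(\mathrm{loc}_\pp)=[E(K_\pp)\otimes\bZ_p:\bZ_p\cdot\mathrm{loc}_\pp(y_\pp)]$, and what remains is to identify this with the stated quantity.

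Next, I would use the hypothesis $a_p(E)\not\equiv 1\pmod{p}$ to determine the target. Since $\#E(\mathbf{F}_p)=p+1-a_p(E)$ is a $p$-adic unit, the reduction exact sequence
\[
0\rightarrow\hat{E}(\mathfrak{m})\rightarrow E(K_\pp)\rightarrow E(\mathbf{F}_p)\rightarrow 0
\]
together with $K_\pp=\bQ_p$ (so $\mathfrak{m}=p\bZ_p$) shows that the natural map $\hat{E}(p\bZ_p)\otimes\bZ_p\rightarrow E(K_\pp)\otimes\bZ_p$ is an isomorphism. The formal group logarithm attached to the N\'{e}ron differential $\omega_E$ is an isomorphism $\log_{\omega_E}\colon\hat{E}(p\bZ_p)\xrightarrow{\sim} p\bZ_p$ (this is the standard fact that $\log_{\hat E}$ induces an isomorphism of $\hat E(\mathfrak m)$ with $\mathfrak m$ when $e<p-1$, applied here with $e=1$). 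Composing, $\log_{\omega_E}$ identifies $E(K_\pp)\otimes\bZ_p$ with $p\bZ_p\subset\bZ_p$.

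Applying this identification to the index above, and using that $\log_{\omega_E}(\mathrm{loc}_\pp(y_\pp))=\log_{\omega_E}(y_\pp)$ by definition of $\log_{\omega_E}$ on $E(K)\otimes\bZ_p$, I obtain
\[
\#\mathrm{coker}(\mathrm{loc}_\pp)=[p\bZ_p:\log_{\omega_E}(y_\pp)\bZ_p].
\]
Finally, from the tower $\log_{\omega_E}(y_\pp)\bZ_p\subset p\bZ_p\subset\bZ_p$ and the multiplicativity of indices,
\[
\#(\bZ_p/\log_{\omega_E}(y_\pp)\bZ_p)=[\bZ_p:p\bZ_p]\cdot[p\bZ_p:\log_{\omega_E}(y_\pp)\bZ_p]=p\cdot[p\bZ_p:\log_{\omega_E}(y_\pp)\bZ_p],
\]
which rearranges to give the claimed formula $\#\mathrm{coker}(\mathrm{loc}_\pp)=p^{-1}\cdot\#(\bZ_p/\log_{\omega_E}(y_\pp))$.

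The argument is essentially a careful bookkeeping of integral structures, so there is no real obstacle; the only point requiring some care is the factor $p^{-1}$, which comes precisely from the fact that $\log_{\omega_E}$ lands in $p\bZ_p$ rather than all of $\bZ_p$. Note also that the formula implicitly asserts $\log_{\omega_E}(y_\pp)\neq 0$, equivalently that $\mathrm{loc}_\pp(y_\pp)$ is of infinite order, which is automatic: if it vanished then $y_\pp$ would lie in $\mathrm{Sel}_{\mathrm{str}}(K,T)$, contradicting the choice of basis.
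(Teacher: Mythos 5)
Your proof is correct and follows essentially the same route as the paper's: both arguments rest on the reduction sequence $0\to E_1(K_\pp)\to E(K_\pp)\to E(\mathbf{F}_p)\to 0$, the vanishing of $E(\mathbf{F}_p)\otimes\bZ_p$ forced by $a_p(E)\not\equiv 1\pmod p$, and the isomorphism $\log_{\omega_E}\colon E_1(K_\pp)\otimes\bZ_p\simeq p\bZ_p$, which is exactly where the factor $p^{-1}$ comes from. The only difference is organizational: you identify $E(K_\pp)\otimes\bZ_p$ with $p\bZ_p$ outright and compute one index, whereas the paper restricts to $Y=\bZ_p.y_\pp$ and runs a snake-lemma bookkeeping with the auxiliary finite quotient $Z=Y/Y_{\pp,1}$ — the two computations are equivalent.
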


\begin{proof}
Let $E_1(K_\pp)$ be the kernel of reduction modulo $\pp$, so there is an exact sequence
\[
0\rightarrow E_1(K_\pp)\rightarrow E(K_\pp)\rightarrow E(\mathbf{F}_p)\rightarrow 0.
\]
Set $Y:=\bZ_p.y_\pp$, $Y_{\pp,1}:={\rm loc}_\pp(Y)\cap(E_1(K_\pp)\otimes\bZ_p)$ and $Z:=Y/Y_{\pp,1}$ (a finite group), and consider the commutative diagram
\[
\xymatrix{
0\ar[r] &Y_{\pp,1}\ar[r]\ar[d]^-{\lambda_{\pp,1}} &Y \ar[r]\ar[d]^-{{\rm loc}_\pp\vert_{Y}} &Z\ar[r]\ar[d]^-{}&0\\
0\ar[r] &E_1(K_\pp)\otimes\bZ_p \ar[r] &E(K_\pp)\otimes\bZ_p 
\ar[r] &E(\mathbf{F}_p)\otimes\bZ_p \ar[r] & 0.
}
\]
Since the middle vertical is injective by our choice of $y_\pp$ and  $E(\mathbf{F}_p)\otimes\bZ_p\simeq\bZ_p/(1-a_p(E)+p)$ is trivial by our assumption on $a_p(E)$, applying the snake lemma we deduce
\begin{equation}\label{eq:snake-2}
\#{\rm coker}({\rm loc}_\pp\vert_Y)\cdot\#Z=\#{\rm coker}(\lambda_{\pp,1}).
\end{equation}

On the other hand, noting that $\#Z\cdot y_\pp$ is a generator of $Y_{\pp,1}$ and the formal group logarithm induces an isomorphism ${\rm log}_{\omega_E}:E_1(K_\pp)\otimes\bZ_p\simeq p\bZ_p$ we find 
\begin{equation}\label{eq:log}
\#{\rm coker}(\lambda_{\pp,1})=\frac{\#\bZ_p/{\rm log}_{\omega_E}(\#Z\cdot y_\pp)}{\#\bZ_p/{\rm log}_{\omega_E}(E_1(K_\pp)\otimes\bZ_p)}=\#Z\cdot p^{-1}\#(\bZ_p/{\rm loc}_{\omega_E}(y_\pp)).
\end{equation}
Since clearly $\#{\rm coker}({\rm loc}_{\pp}\vert_Y)=[E(K_\pp)\otimes\bZ_p\colon{\rm loc}_\pp(S_p(E/K))]$ by the definition of $y_\pp$,  combining (\ref{eq:snake-2}) and (\ref{eq:log}) the result follows.
\end{proof}

We can now conclude the proof of Theorem~\ref{thm:A}.

\begin{proof}[Proof of Theorem~\ref{thm:A}]
By Proposition~\ref{prop:2.23} we have ${\rm ord}_{J}F_\pp(f)\geqslant 2\rho$ with $\rho=\max\{r^+,r^-\}-1$, and the equality
\begin{equation}\label{eq:2.23}
\bar{F}_\pp(f)=\#(\Sel_\pp(K,E_{p^\infty})_{/{\rm div}})\cdot{\rm det}(A)^{-2}\cdot
\prod_{k=1}^{p-1}\mathcal{R}_\pp^{(k)}
\end{equation}
in $(J^{2\rho}/J^{2\rho+1})\otimes\bQ$ up to a $p$-adic unit. On the other hand, combining Propositions~\ref{prop:6.11} and \ref{prop:coker} we obtain
\begin{equation}\label{eq:Sel-div}
\begin{aligned}
\#(\Sel_\pp(K,E_{p^\infty})_{/{\rm div}})&=\#\Sha(E/K)_{p^\infty}\cdot(\#{\rm coker}({\rm loc}_\pp))^2\\
&=\#\Sha(E/K)_{p^\infty}\cdot p^{-2}\cdot{\rm log}_{\omega_E}(y_\pp)^2,
\end{aligned}
\end{equation}
with the last equality holding up to a $p$-adic unit. Recalling the Definition~\ref{def:regp} of ${\rm Reg}_{\pp,{\rm der}}$, the proof of Theorem~\ref{thm:A} now follows from (\ref{eq:2.23}) and (\ref{eq:Sel-div}).
\end{proof}

{\bf Acknowledgements.} It is a pleasure to thank Henri Darmon and Chris Skinner for their comments on an earlier draft of this paper.
\bibliographystyle{amsalpha}
\bibliography{Rubin-refs}

\providecommand{\bysame}{\leavevmode\hbox to3em{\hrulefill}\thinspace}
\providecommand{\MR}{\relax\ifhmode\unskip\space\fi MR }
\providecommand{\MRhref}[2]{%
  \href{http://www.ams.org/mathscinet-getitem?mr=#1}{#2}
}
\providecommand{\href}[2]{#2}
\begin{thebibliography}{BCDT01}

\bibitem[Agb07]{agboola-I}
A.~Agboola, \emph{On {R}ubin's variant of the {$p$}-adic {B}irch and
  {S}winnerton-{D}yer conjecture}, Compos. Math. \textbf{143} (2007), no.~6,
  1374--1398.

\bibitem[AI19]{AI-non-split}
Fabrizio Andreatta and Adrian Iovita, \emph{Katz type $p$-adic {$L$}-functions
  for primes $p$ non-split in the {CM} field}, Preprint,
  \href{https://arxiv.org/abs/1905.00792}{arXiv:1905.00792}.

\bibitem[BCDT01]{BCDT}
Christophe Breuil, Brian Conrad, Fred Diamond, and Richard Taylor, \emph{On the
  modularity of elliptic curves over {$\bold Q$}: wild 3-adic exercises}, J.
  Amer. Math. Soc. \textbf{14} (2001), no.~4, 843--939 (electronic).

\bibitem[BCK19]{BCK-PRconj}
Ashay Burungale, Francesc Castella, and Chan-Ho Kim, \emph{A proof of
  {P}errin-{R}iou's {H}eegner point main conjecture}, Preprint,
  \href{https://arxiv.org/abs/1908.09512}{arXiv:1908.09512}.

\bibitem[BD94]{BD-derived-DMJ}
Massimo Bertolini and Henri Darmon, \emph{Derived heights and generalized
  {M}azur-{T}ate regulators}, Duke Math. J. \textbf{76} (1994), no.~1, 75--111.

\bibitem[BD95]{BD-derived-AJM}
\bysame, \emph{Derived {$p$}-adic heights}, Amer. J. Math. \textbf{117} (1995),
  no.~6, 1517--1554.

\bibitem[BD96]{BDmumford-tate}
M.~Bertolini and H.~Darmon, \emph{Heegner points on {M}umford-{T}ate curves},
  Invent. Math. \textbf{126} (1996), no.~3, 413--456.

\bibitem[BD05]{bdIMC}
\bysame, \emph{Iwasawa's main conjecture for elliptic curves over
  anticyclotomic {$\Bbb Z_p$}-extensions}, Ann. of Math. (2) \textbf{162}
  (2005), no.~1, 1--64.

\bibitem[BDP12]{bdp3}
M.~Bertolini, H.~Darmon, and K.~Prasanna, \emph{{$p$}-adic {R}ankin
  {$L$}-series and rational points on {CM} elliptic curves}, Pacific J. of
  Math. (2012), 261--303.

\bibitem[BDP13]{bdp1}
Massimo Bertolini, Henri Darmon, and Kartik Prasanna, \emph{Generalized
  {H}eegner cycles and {$p$}-adic {R}ankin {$L$}-series}, Duke Math. J.
  \textbf{162} (2013), no.~6, 1033--1148.

\bibitem[Bra11]{braIMRN}
Miljan Brako{\v{c}}evi{\'c}, \emph{Anticyclotomic {$p$}-adic {$L$}-function of
  central critical {R}ankin-{S}elberg {$L$}-value}, Int. Math. Res. Not.
  \textbf{Issue 12} (2011).

\bibitem[Cas17]{cas-BF}
Francesc Castella, \emph{{$p$}-adic heights of {H}eegner points and
  {B}eilinson-{F}lach classes}, J. Lond. Math. Soc. (2) \textbf{96} (2017),
  no.~1, 156--180.

\bibitem[Cas19]{cas-2var}
\bysame, \emph{On the $p$-adic variation of {H}eegner points}, J. Inst. Math.
  Jussieu, to appear (2019).

\bibitem[CH18]{cas-hsieh1}
Francesc Castella and Ming-Lun Hsieh, \emph{Heegner cycles and p-adic
  {L}-functions}, Math. Ann. \textbf{370} (2018), no.~1-2, 567--628.

\bibitem[Cor02]{cornut}
Christophe Cornut, \emph{Mazur's conjecture on higher {H}eegner points},
  Invent. Math. \textbf{148} (2002), no.~3, 495--523.

\bibitem[CV07]{CV-dur}
Christophe Cornut and Vinayak Vatsal, \emph{Nontriviality of {R}ankin-{S}elberg
  {$L$}-functions and {CM} points}, {$L$}-functions and {G}alois
  representations, London Math. Soc. Lecture Note Ser., vol. 320, Cambridge
  Univ. Press, Cambridge, 2007, pp.~121--186.

\bibitem[Dar92]{darmon-PhD}
Henri Darmon, \emph{A refined conjecture of {M}azur-{T}ate type for {H}eegner
  points}, Invent. Math. \textbf{110} (1992), no.~1, 123--146.

\bibitem[Gre94]{Greenberg55}
Ralph Greenberg, \emph{Iwasawa theory and {$p$}-adic deformations of motives},
  Motives ({S}eattle, {WA}, 1991), Proc. Sympos. Pure Math., vol.~55, Amer.
  Math. Soc., Providence, RI, 1994, pp.~193--223.

\bibitem[Gre06]{greenberg-Coates}
\bysame, \emph{On the structure of certain {G}alois cohomology groups}, Doc.
  Math. (2006), no.~Extra Vol., 335--391.

\bibitem[Gro91]{gross-kolyvagin}
Benedict~H. Gross, \emph{Kolyvagin's work on modular elliptic curves},
  {$L$}-functions and arithmetic ({D}urham, 1989), London Math. Soc. Lecture
  Note Ser., vol. 153, Cambridge Univ. Press, Cambridge, 1991, pp.~235--256.

\bibitem[How04]{howard-PhD-II}
Benjamin Howard, \emph{Iwasawa theory of {H}eegner points on abelian varieties
  of {$\rm GL_2$} type}, Duke Math. J. \textbf{124} (2004), no.~1, 1--45.

\bibitem[How06]{howard-bipartite}
\bysame, \emph{Bipartite {E}uler systems}, J. Reine Angew. Math. \textbf{597}
  (2006), 1--25.

\bibitem[JSW17]{JSW}
Dimitar Jetchev, Christopher Skinner, and Xin Wan, \emph{The {B}irch and
  {S}winnerton-{D}yer formula for elliptic curves of analytic rank one}, Camb.
  J. Math. \textbf{5} (2017), no.~3, 369--434.

\bibitem[Kri18]{kriz-PhD}
Daniel Kriz, \emph{A {N}ew $p$-adic {M}aass-{S}himura operator and
  {S}upersingular {R}ankin-{S}elberg $p$-adic {L}-functions}, Preprint,
  \href{https://arxiv.org/abs/1805.03605}{arXiv:1905.03605}.

\bibitem[Maz72]{mazur-18}
Barry Mazur, \emph{Rational points of abelian varieties with values in towers
  of number fields}, Invent. Math. \textbf{18} (1972), 183--266.

\bibitem[Maz84]{mazur-ICM84}
B.~Mazur, \emph{Modular curves and arithmetic}, Proceedings of the
  {I}nternational {C}ongress of {M}athematicians, {V}ol.\ 1, 2 ({W}arsaw,
  1983), PWN, Warsaw, 1984, pp.~185--211.

\bibitem[MR03]{MR-kato}
Barry Mazur and Karl Rubin, \emph{Studying the growth of {M}ordell-{W}eil}, no.
  Extra Vol., 2003, Kazuya Kato's fiftieth birthday, pp.~585--607.

\bibitem[MT83]{mt-biext}
B.~Mazur and J.~Tate, \emph{Canonical height pairings via biextensions},
  Arithmetic and geometry, {V}ol. {I}, Progr. Math., vol.~35, Birkh\"{a}user
  Boston, Boston, MA, 1983, pp.~195--237.

\bibitem[Nek01]{nekovarII}
Jan Nekov{\'a}{\v{r}}, \emph{On the parity of ranks of {S}elmer groups. {II}},
  C. R. Acad. Sci. Paris S\'er. I Math. \textbf{332} (2001), no.~2, 99--104.

\bibitem[Rub92]{rubin-points}
Karl Rubin, \emph{{$p$}-adic {$L$}-functions and rational points on elliptic
  curves with complex multiplication}, Invent. Math. \textbf{107} (1992),
  no.~2, 323--350.

\bibitem[Rub94]{rubin-BSD}
\bysame, \emph{{$p$}-adic variants of the {B}irch and {S}winnerton-{D}yer
  conjecture for elliptic curves with complex multiplication}, {$p$}-adic
  monodromy and the {B}irch and {S}winnerton-{D}yer conjecture ({B}oston, {MA},
  1991), Contemp. Math., vol. 165, Amer. Math. Soc., Providence, RI, 1994,
  pp.~71--80.

\bibitem[Sch82]{schneiderI}
Peter Schneider, \emph{{$p$}-adic height pairings. {I}}, Invent. Math.
  \textbf{69} (1982), no.~3, 401--409.

\bibitem[Ser72]{serre-open}
Jean-Pierre Serre, \emph{Propri\'{e}t\'{e}s galoisiennes des points d'ordre
  fini des courbes elliptiques}, Invent. Math. \textbf{15} (1972), no.~4,
  259--331.

\bibitem[Ski14]{skinner}
Christopher Skinner, \emph{A converse to a theorem of {G}ross, {Z}agier, and
  {K}olyvagin}, Preprint,
  \href{https://arxiv.org/abs/1405.7294}{arXiv:1405.7294}.

\bibitem[Zha14]{zhang-Kolyvagin}
Wei Zhang, \emph{Selmer groups and the indivisibility of {H}eegner points},
  Camb. J. Math. \textbf{2} (2014), no.~2, 191--253.

\end{thebibliography}

\end{document}